\newtheorem{proposition}{Proposition}
\newcommand\Pb{\mathbb{P}}
\newcommand\E{\mathbb{E}}
\newcommand\R{\mathbb{R}}
\newcommand\Mnr{\mathcal{M}_n}
\newtheorem{theorem}{Theorem}
\newtheorem{lemma}{Lemma}
\newtheorem{definition}{Definition}
\newcommand\pen{{\rm pen}}
\newcommand\penh{\widehat{\rm pen}}
\newcommand\penuv{\pen^{(uv)}}
\newcommand\hmuv{\hat{m}^{(uv)}}
\newcommand\hmkv{\hat{m}^{(kv)}}
\newcommand\N{\mathbb{N}}
\newcommand\Ld{\mathbb{L}^2([0,1])}
\newcommand{\EX}{\mathbb E_{\mathbf X}}
\newcommand{\Var}{{\rm Var}}
\newcommand{\tbkv}{\widetilde\beta^{(kv)}}
\newcommand{\tbuv}{\widetilde\beta^{(uv)}}
\newcounter{rem}
\newenvironment{remark}{\refstepcounter{rem}\\\textbf{Remark~\therem: }}{\\}
\title{Non-asymptotic Adaptive Prediction in Functional Linear Models}
\author{\'Elodie Brunel}
\author{Andr\'e Mas}
\author{Angelina Roche\footnote{Corresponding author: angelina.roche@univ-montp2.fr}}
\affil{ I3M, Universit\'e Montpellier II }
\date{}
\begin{document}
\maketitle

\begin{abstract}
Functional linear regression has recently attracted considerable interest. Many works focus on asymptotic inference. In this paper we consider in a non asymptotic framework a simple estimation procedure based on functional Principal Regression. It revolves in the minimization of a least square contrast coupled with a classical projection on the space spanned by the $m$ first empirical eigenvectors of the covariance operator of the functional sample. The novelty of our approach is to select automatically the crucial dimension $m$ by minimization of a penalized least square contrast. Our method is based on model selection tools. Yet, since this kind of methods consists usually in projecting onto known non-random spaces, we need to adapt it to empirical eigenbasis made of data-dependent – hence random – vectors. The resulting estimator is fully adaptive and is shown to verify an oracle inequality for the risk associated to the prediction error and to attain optimal minimax rates of convergence over a certain class of ellipsoids. Our strategy of model selection is finally compared numerically with cross-validation. 
\end{abstract}
\noindent\textbf{AMS subject classification: }Primary, 62J05; Secondary, 62G08.
\vspace{0.2cm}

\noindent\textbf{Keywords. }Functional linear regression, functional principal components analysis, mean squared prediction error, minimax rate, penalized contrast estimator, model selection on random bases. 
\section*{Introduction}
Functional data analysis has known recent advances in the past two decades, addressing simultaneously many fields of applications. We refer to Ferraty and Vieu~\cite{ferraty_nonparametric_2006} and Ramsay and Silverman \cite{ramsay_functional_2005} for detailed examples in medicine, linguistics and chemometrics and to Preda and Saporta~\cite{preda_pls_2005} for applications in econometrics. 

In this paper we suppose that the dependence between a real-valued response $Y$ and a functional predictor $X$ belonging to a Hilbert space ($\mathbb H$, $<\cdot,\cdot>$,$\|\cdot\|$) is given by the functional linear model, namely
\begin{equation}\label{modele}
  Y=<\beta,X>+\varepsilon,
\end{equation}
where $\varepsilon$ stands for a noise term with variance $\sigma^2$ and is independent of $X$ and $\beta\in\mathbb H$ is an unknown function to be estimated. In order to simplify the notations, the random variable $X$ is supposed to be centred as well, which means that the function $t\mapsto \E[X(t)]$ is identically equal to zero. 

By multiplying both sides of Equation~(\ref{modele}) by $X(s)$ and taking the expectation, we see easily that the function $\beta$ is solution of
\begin{equation}\label{def:Gamma}
\Gamma\beta:=\E\left[<\beta,X>X(\cdot)\right]=\E\left[YX\right]=:g,
\end{equation}  
where $\Gamma$ is the covariance operator associated to the functional predictor $X$. Equation~(\ref{def:Gamma}) is known to be an ill-posed inverse problem (see Engl \textit{et al.}~\cite[Chapter 2.1]{engl_inverse_prblems_1996}).

The literature on the functional linear model is wide and numerous estimation procedures exist. A first method consists in minimizing a least square criterion subject to a roughness penalty. For instance, Li and Hsing~\cite{li_rates_2007} proposed an estimation procedure by minimization of such a criterion on periodic Sobolev spaces, Crambes \textit{et al. }\cite{crambes_smoothing_2009} generalized the well-known smoothing-spline estimator used in univariate nonparametric regression. Another approach is based on dimension reduction: this consists in approximating the regression function $\beta$ by projection onto finite-dimensional spaces. Those spaces are usually obtained by taking the first components of a basis of $\mathbb H$. Some authors considered projection onto fixed basis, such as B-spline basis (Ramsay and Dalzell \cite{ramsay_tools_1991}) or general orthonormal basis (Cardot and Johannes~\cite{cardot_thresholding_2010}). But the most popular method is Functional Principal Component Regression (FPCR), this consists in taking the random space spanned by the eigenfunctions associated to the largest eigenvalues of the empirical covariance operator:
\begin{equation}\label{def:Gamman}
\Gamma_n: f\in\mathbb H\mapsto\frac{1}{n}\sum_{i=1}^n<f,X_i>X_i.
\end{equation}
The resulting estimator is shown to be consistent, but its behaviour is often erratic in simulation studies, thus a smooth version by using splines  has been proposed by Cardot~\textit{et al.} \cite{cardot_spline_2003}. The FPCR estimator is shown to attain optimal rates of convergence for the risk associated to the prediction error over fixed curves $x$ (see Cai and Hall~\cite{cai_prediction_2006}) as well as for the $\mathbb{L}^2$-risk (see Hall and Horowitz~\cite{hall_methodology_2007}). 
 
All the proposed estimators rely on the choice of at least one tuning parameter (the smoothing parameter appearing in the penalized criterion or the dimension of approximation space) which influences significantly the quality of estimation. Optimal choice of such parameters depends generally on both unknown regularities of the slope function $\beta$ and the predictor $X$ (see e.g. \cite{cai_prediction_2006,crambes_smoothing_2009,cardot_thresholding_2010}) and the parameters are usually chosen in practice by cross-validation.  

 Until the recent work of Comte and Johannes~\cite{comte_adaptive_2010}, nonasymptotic results providing adaptive data-driven estimators were missing. Comte and Johannes~\cite{comte_adaptive_2010,comte_adaptive_2012} propose model selection procedures for the orthogonal series estimator introduced first by Cardot and Johannes~\cite{cardot_thresholding_2010}. In~\cite{comte_adaptive_2010}, they propose to select the dimension by minimization of a penalized contrast criterion under strong assumption of periodicity of the curve $X$ while in~\cite{comte_adaptive_2012} they define a dimension selection criterion by means of a stochastic penalized contrast emulating Lepski's method (see Goldenshluger and Lepski~\cite{goldenshluger_bandwidth_2011}) and do not require specific assumptions on the curve $X$. The resulting estimators are completely data-driven and achieve optimal minimax rates for general weighted  $\mathbb L^2$-risks. However, since both dimension selection criteria depend on weights defining the risk, these selection procedures do not address prediction error, which can be written as a weighted norm whose weights are the unknown eigenvalues of the covariance operator. 
 
 In the same context as Comte and Johannes~\cite{comte_adaptive_2010}, Brunel and Roche~\cite{brunel:hal-00651399} propose to estimate the slope function by minimizing a least square contrast on spaces spanned by the trigonometric basis. The dimension is selected by means of a penalized contrast. Their estimator is proved to attain the optimal minimax rate of convergence for the risk associated to the prediction error.   

Another approach is proposed by Cai and Yuan~\cite{cai_minimax_2012} carrying out reproducing kernel Hilbert spaces.  They develop a data-driven choice of the tuning parameter of the roughness regularization method (see e.g. Ramsay and Silverman~\cite{ramsay_functional_2005}). Their estimation procedure is shown to attain the optimal rate of convergence without the need of knowing the covariance kernel. Lee and Park~\cite{lee_sparse_2012} also suggest general variable selection procedures based on a weighted $L_1$ penalty under assumption of sparsity on the functional parameter $\beta$. Their estimator is shown to be consistent and to satisfy the oracle-property. 

In this paper, we propose an entirely data-driven procedure to select the adequate dimension for the classical FPCR estimator. The method proposed is based on model selection tools developed in a general context by Barron~\textit{et al.}~\cite{barron_risk_1999}, outlined by Massart~\cite{massart_concentration_2007}, and in a context of regression by Baraud~\cite{baraud_model_2000,baraud_model_2002}.  However, these tools are not meant to deal with estimators defined on random approximation spaces and thus have to be adapted. Section~\ref{estimation} is devoted to the description of estimation procedure. The resulting estimator is proved to satisfy an oracle-type inequality and to attain the optimal minimax rate of convergence for the risk associated to the prediction error for slope functions belonging to Sobolev classes in Section~\ref{oracle_vitesseCV}. In Section~\ref{simulation}, a simulation study is presented including a comparison with cross-validation. The proofs are detailed in Section~\ref{proofs} and in the Appendix.

\section{Definition of the estimator}
\label{estimation}

We assume that we are given an i.i.d. sample $(Y_i,X_i)_{i \geq 1}$ where the generic $Y$ is real and $X$ belongs to the Hilbert space $\mathbb H$. Thereafter, the Hilbert space is set to be $\mathbb H=\Ld$ equipped with its usual inner product $<\cdot,\cdot>$ defined by $<f,g>=\int_0^1f(t)g(t)dt$ but our method adapts to more general Sobolev spaces as well. We assumed above that $X$ is a centred random curve.

We recall that the theoretical covariance operator $\Gamma$ of $X$ defined by Equation~(\ref{def:Gamma}) in the introductory section is a selfadjoint trace class operator defined on and with values in $\Ld$. This means that the sequence of its eigenvalues denoted $(\lambda_j)_{j\geq 1}$ is positive and summable. The associated sequence of eigenfunctions is denoted by $(\psi_j)_{j\geq 1}$.

\subsection{Collection of models}

If the $(\psi_j)_{j\leq 1}$ were known an obvious choice would be to consider a model collection $(S_m)_m$ based on these eigenfunctions. Unfortunately this is not possible.
As the empirical covariance operator $\Gamma_n$ defined by Equation~(\ref{def:Gamman}) is selfadjoint too, there exists an orthonormal basis $(\hat\psi_j)_{j\geq 1}$ of $\Ld$ composed of eigenfunctions of $\Gamma_n$; we denote by $(\hat\lambda_j)_{j\geq 1}$ the associated eigenvalues arranged in decreasing order. Since $\Gamma_n$ is finished-rank, the $(\hat\lambda_j)_{j\geq 1}$ are necessarily null at least for $j > n$. The couples $(\hat\lambda_j, \hat\psi_j)_{j\geq 1}$ are the empirical counterparts of the $(\lambda_j, \psi_j)_{j\geq 1}$.
Dimension reduction based on functional Principal Component Analysis usually comes down to projecting the data on the space spanned by the $(\hat\psi_j)_{j\leq K}$ for some $K$. Our aim here is to shift to a model selection approach.


Let $\hat N_n$ be a random integer which will be defined later. For all $m\in\widehat{\Mnr}:=\{1,...,\hat N_n\}$, we define: 
 \begin{eqnarray*}
 \hat{S}_m&:=&\text{span}\{\hat{\psi}_1,...,\hat{\psi}_m\} 
\end{eqnarray*}

\noindent and the vector space $\hat{S}_m$ is an empirical counterpart of $S_m:=\text{span}\left\{\psi_1,...,\psi_m\right\}$. It is important to note that a major difference appears here. Classical model selection is carried out with fixed and known families of model. Here we handle random bases and this is the source of additional problems related to the convergence of (possibly random) projectors associated to these finite-dimensional spaces. Other difficulties come from the non-linear dependence between the coefficients of our estimator in the basis $(\hat\psi_j,....,\hat\psi_m)$ and the basis itself.

\subsection{Estimation on $\hat S_m$}

Introduce the following simple least square contrast:
\[\gamma_n(t):=\frac1n\sum_{i=1}^n(Y_i-<t,X_i>)^2.\]
Define $\hat g:=(1/n)\sum_{i=1}^nY_i X_i$ the cross-covariance between $Y$ and $X$ (which is also the theoretical counterpart of the function $g$ appearing in Equation~(\ref{def:Gamma})) and
\begin{equation}\label{def:hbetam}
 \hat{\beta}_m:=\sum_{j=1}^m\frac{<\hat g,\hat{\psi}_j>}{\hat{\lambda}_j}\hat{\psi}_j. 
\end{equation}
We can see easily that~(\ref{def:hbetam}) is the unique minimizer of the least square contrast $\gamma_n$ if $\hat\lambda_m>0$.  

From (\ref{modele}) the noise variance is as a parameter that we cannot dismiss since it appears in many computations. We distinguish two cases below.

\subsection{Model selection with known noise variance}
We suppose as a first step that the noise variance $\sigma^2$ is known. 
 
We set
\[\hat N_n:=\max\left\{N\in\N^*, N\leq20\sqrt{n/\ln^3(n)} \text{ and }\hat\lambda_N\geq \mathfrak{s}_n\right\},\]
 where $\mathfrak{s}_n:=\frac{2}{n^2}\left(1-\frac{1}{\ln^2 n}\right)$ ;
and its theoretical counterpart
\[N_n:=\max\left\{N\in\N^*, N\leq20\sqrt{n/\ln^3(n)}\text{ and }\lambda_N\geq n^{-2}\right\}.\] 

The introduction of an empirical maximal dimension is motivated by the need to ensure that the terms $\hat\lambda_j$ appearing in the definition of our estimator are not too small. 
 
We select the dimension $\hmkv\in\widehat{\Mnr}$ by minimizing the criterion
\begin{equation}\label{critvc}\text{crit}(m)=\gamma_n(\hat\beta_m)+\pen^{(kv)}(m)\end{equation}
with
\begin{equation}\label{penkv}
\pen^{(kv)}(m):=(1+\theta)\frac{\sigma^2}{n}m,
\end{equation}
where $\theta$ is a positive constant.
Then, we propose the following estimator of the function $\beta$
\begin{equation*}
\tbkv:=\hat\beta_{\hat m^{(kv)}}.
  \end{equation*}

\subsection{Model selection with unknown noise variance}
\label{subsec:selec_uv}
Let $\theta>4$ and $\delta>0$, we set
\[\hat N_n:=\max\left\{N\in\N^*, N\leq\min\{20\sqrt{n/ \ln^3(n)},n/\theta(1+2\delta)\} \text{ and }\hat\lambda_N\geq \mathfrak{s}_n\right\},\]
and 
\[N_n:=\max\left\{N\in\N^*, N\leq\min\{20\sqrt{n/\ln^3(n)},n/\theta(1+2\delta)\}\text{ and }\lambda_N\geq n^{-2}\right\},\]
the term $\sigma^2$ appearing in Equation~(\ref{penkv}) is replaced by the following estimator
\[\hat\sigma_m^2:=\frac{1}{n}\sum_{i=1}^n(Y_i-<\hat\beta_m,X_i>)^2=\gamma_n(\hat\beta_m). \]

The penalty becomes: 
\[\penh(m):=\theta(1+\delta)\hat{\sigma}_m^2\frac{m}{n},\]
and the selection criterion: 
\begin{equation}\label{critvi}\hat m^{(uv)}\in{\arg\min}_{m\in\widehat{\Mnr}}(\gamma_n(\hat\beta_m)+\widehat{\text{pen}}(m))={\arg\min}_{m\in\widehat{\Mnr}}\gamma_n(\hat\beta_m)\left(1+\theta(1+\delta)\frac mn\right).\end{equation}

We also denote 
\[\pen^{(uv)}(m):=\theta(1+\delta)\sigma^2\frac{m}{n},\]
the theoretical counterpart of $\widehat{\pen}(m)$.

Finally, we define the following estimator: 
\begin{equation*}
\tbuv:=\hat\beta_{\hat m^{(uv)}}.
\end{equation*}

In the sequel, when a property applies to both $\tbkv$ and $\tbuv$ we denote simply these estimators by $\widetilde\beta$, in that case we will denote also, $\hat m^{(kv)}$ and $\hat m^{(uv)}$ by $\hat m$ and $\pen^{(kv)}(m)$ and $\pen^{(uv)}(m)$ by $\pen(m)$.

\section{Main results}
\label{oracle_vitesseCV}
In this section we derive oracle-type inequalities and uniform bounds for the risk associated to the prediction error. The prediction error of an estimator $\hat\beta$ (see e.g.~\cite{crambes_smoothing_2009,cardot_thresholding_2010}) is defined by
\begin{equation}\label{prediction_error}
\E\left[\left(\hat{Y}_{n+1}-\E[Y_{n+1}|X_{n+1}]\right)^2|X_1,...,X_n\right]=\|\Gamma^{1/2}(\hat\beta-\beta)\|^2=:\|\hat\beta-\beta\|_\Gamma^2,
\end{equation}
where $\hat Y_{n+1}:=\int_0^1\hat\beta(t)X_{n+1}(t)dt$.
We suppose that $\lambda_j>0$ for all $j\geq 1$, which implies that the quantity~(\ref{prediction_error}) defines a norm on $\Ld$ denoted by $\|\cdot\|_\Gamma^2$. This condition is necessary for the model to be identifiable. Indeed, if there exists $j_0\geq 1$ such that $\lambda_{j_0}=0$, we have:
$$
0=\lambda_{j_0}\|\psi_{j_0}\|^2=<\Gamma\psi_{j_0},\psi_{j_0}>=\E\left[<X,\psi_{j_0}>^2\right],
$$
 and $<X,\psi_{j_0}>=0$ almost surely. By consequence, if the slope function $\beta$ satisfies Equation~(\ref{modele}), then any slope function of the form $\beta+c\psi_{j_0}$,  with $c\in\R$, satisfies also Equation~(\ref{modele}):   it is clearly impossible to identify the slope function with our sample in that case. However this condition is not sufficient, for more details on the problem of identifiability in functional linear models see Section 2 of Cardot~\textit{et al.}~\cite{cardot_spline_2003}. 

\subsection{Assumptions}

Recall that $(\lambda_j,\psi_j)_{j\geq 2}$ denote the eigenelements of the covariance operator $\Gamma$. We can control the risk under four assumptions: 
\begin{description}
\item[H1] There exists $p>4$ such that $\tau_p:=\E[|\varepsilon|^p]<+\infty$.
\item[H2] There exists $b>0$ such that, for all $l\in\N^*$, 
\[\sup_{j\in\N}\E\left[\frac{<X,\psi_j>^{2l}}{\lambda_j^l}\right]\leq l!b^{l-1}.\]
\end{description}

\begin{description}
\item[H3] For all $j\neq k$, $<X,\psi_j>$ is independent of $<X,\psi_k>$.
\item[H4] There exists a constant $\gamma>0$ such that the sequence $\left(j\lambda_j\ln^{1+\gamma}(j)\right)_{j\geq 2}$ is decreasing. 
\end{description}

Assumption~\textbf{H1} is standard in regression. Assumption~\textbf{H2} is necessary to apply exponential inequalities. Assumption~\textbf{H3} is also classical and we know from the Karhunen-Loeve decomposition of $X$ that it is true for $X$ a Gaussian process (see~\cite[Section 1.4]{ash_topics_1975}). Moreover, note that for every general random variables $X\in\Ld$, the random variables $<X,\psi_j>$ and $<X,\psi_k>$ are uncorrelated since if $j\neq k$, $\E[<\psi_j,X_i><\psi_k,X_i>]=<\Gamma\psi_j,\psi_k>=0$.  The assumption on the sequence $\left(j\lambda_j\ln^{1+\gamma}(j)\right)_{j\geq 2}$ allows to avoid more restrictive hypotheses about spacing control between eigenvalues as usually made frequently in the literature (see \cite{cai_prediction_2006}, \cite{hall_properties_2006}, \cite{hall_methodology_2007}).

In order to derive oracle-inequalities for the risk associated to the prediction error, we need to precise the decreasing rate of the sequence $(\lambda_j)_{j\geq 1}$. Usually in functional linear regression, this rate is supposed to be polynomial (see for instance \cite{cai_prediction_2006,crambes_smoothing_2009,cai_minimax_2012}) but more regular processes may be considered. That is the reason why, following Cardot and Johannes~\cite{cardot_thresholding_2010} or Comte and Johannes~\cite{comte_adaptive_2010}, we consider also exponential rates.
\begin{description}
\item[$(\mathbf P)$ Polynomial decrease] There exists two constants $a>1$ and $c_P\geq1$ such that, for all $j\geq 1$
\[c_P^{-1}j^{-a}\leq \lambda_j\leq c_Pj^{-a}.\]
\item[$(\mathbf E)$ Exponential decrease] There exists two constants $a>0$ and $c_E\geq 1$  such that for all $j\geq 1$
\[c_E^{-1}\exp(-j^a)\leq \lambda_j\leq c_E\exp(-j^a).\]
\end{description}

\subsection{Upper-bound on the empirical risk}
\label{subsec:control_empirical_risk}

We define an empirical semi-norm naturally associated to our estimation problem by 
\[\|f\|_{\Gamma_n}^2:=\|\Gamma_n^{1/2}f\|^2=\frac 1n\sum_{i=1}^n<f,X_i>^2, \text{ for all }f\in\Ld.\]

In a first step, in propositions \ref{risknvc} and \ref{risknvi}, we prove that our estimators verify an oracle type inequality for the risk associated to this semi-norm whatever the regularity of the slope function $\beta$ and the decreasing rate of the covariance operator eigenvalues are.

\subsubsection{Bound on the empirical risk with known noise variance}
\begin{proposition}\label{risknvc}Suppose that Assumption~\textbf{H1} is fulfilled, we have
\begin{equation*}
\mathbb E[\|\tbkv-\beta\|_{\Gamma_n}^2]\leq C\inf_{m\in\Mnr}\left\{\mathbb E[\|\beta-\hat\Pi_m\beta\|_{\Gamma_n}^2]+\pen^{(kv)}(m)\right\}+\frac{C'(\sigma^2+\|\beta\|^2)}{n}, \end{equation*}
with $C,C'>0$ depending only on $\theta$ and $p$ and $\hat\Pi_m$ the orthonormal projector onto $\hat S_m$.
\begin{proof}
We want here to take advantage of Corollary 3.1  in Baraud~\cite{baraud_model_2000} who provided a very similar result in the context of regression on a fixed design. Indeed Baraud considers a model $Y=s(x)+\epsilon$ where $Y$ is real, $x$ takes values in some measurable space and $s$ is a general mapping. Conditioning our functional linear model with respect to $\mathbf{X}:=\{X_1,...,X_n\}$ we can switch from the model considered in Baraud to ours by setting $s(x)=\left\langle \beta,x\right\rangle$. The seminorm $\|\cdot\|_n$ becomes our $\|\cdot\|_{\Gamma_n}$ and the least square contrast is now :
\[\tilde\gamma_n:\phi\mapsto\frac 1n\sum_{i=1}^n\left(Y_i-\phi(X_i)\right)^2.\]
Our last task consists in identifying the class of models, namely the $S_m$'s. Still sticking to Baraud's notation the latter should be subspaces of $\mathbb{L}^{2}\left(\mathbb{H},\left\Vert \cdot \right\Vert _{n}\right) $. It is simple to see that the following collection suits:
\[\hat{S}_m:=\text{span}\left\{\hat\psi_j, j=1,...,m \right\} \subset \mathbb{L}^{2}\left(\mathbb{H},\left\Vert \cdot \right\Vert _{n}\right), m=1,...,N_n,\]
through the identification mentioned above that is  $\hat\psi_j(x)=\left\langle \hat\psi_j,x\right\rangle$.
%
As Assumption~\textbf{H1} is supposed to be verified, we are now ready to apply Corollary 3.1 of Baraud~\cite{baraud_model_2000} with $q=1$ and obtain that a.s.
\begin{equation}\label{majocond}\E_{\mathbf X}\left[\|\beta-\hat\beta_{\hat m}\|_n^2\right]\leq C(\theta)\inf_{m\in\widehat{\mathcal{M}_n}}\left(\|\beta-\Pi_{\hat S_m}\beta\|_n^2+\pen(m)\right)+\frac{\Theta_p}{n}\sigma^2,
\end{equation}
with
\[\Theta_p:=C'(\theta,p)\frac{\tau_p}{\sigma^p}\left(1+\sum_{m\in\widehat{\Mnr}}m^{-(p/2-2)}\right)\leq C''(\theta,p)\frac{\tau_p}{\sigma^p},\]
and $\E_\mathbf{X}$ denotes the conditional expectation with respect to $\mathbf{X}$.

Noticing that we set earlier $\Pi_{\hat S_m}=\hat\Pi_m$, Equation~(\ref{majocond}) leads to :


\[\E_{\mathbf X}\left[\|\beta-\hat\beta_{\hat m}\|_{\Gamma_n}^2\right]\leq C(\theta)\inf_{m\in\widehat{\mathcal{M}_n}}\left(\|\beta-\hat\Pi_m\beta\|_{\Gamma_n}^2+\pen(m)\right)+\frac{C''(\theta,p)}{n}\sigma^2.\] 
Now, we must ensure that the dimension of the oracle (i.e. the dimension that realises the best bias-variance compromise in $\mathcal M_n$) is included in $\widehat{\Mnr}$. Remark that, if $m>\hat{N}_n$, 
\[\|\beta-\hat\Pi_{\hat N_n}\beta\|_{\Gamma_n}^2+\pen(\hat N_n)\leq \sum_{j>\hat N_n}\hat\lambda_j<\beta,\hat\psi_j>^2+\pen(m)+\frac{(1+\theta)\sigma^2}{n}(\hat{N_n}-m),\]
moreover 
\begin{eqnarray*}
\sum_{j>\hat N_n+1}\hat\lambda_j<\beta,\hat\psi_j>^2&=&\|\beta-\hat\Pi_m\beta\|_{\Gamma_n}^2+\sum_{j=\hat N_n+1}^m\hat\lambda_j<\beta,\hat\psi_j>^2\leq\|\beta-\hat\Pi_m\beta\|_{\Gamma_n}^2+\mathfrak{s}_n\|\beta\|^2,
\end{eqnarray*}
since, for all $j>\hat{N}_n$, $\hat\lambda_j\leq\mathfrak{s}_n$. Therefore 
\begin{equation}\label{hMnr2Mnr}
\|\beta-\hat\Pi_{\hat N_n}\beta\|_{\Gamma_n}^2+\pen(\hat N_n)\leq \|\beta-\hat\Pi_m\beta\|_{\Gamma_n}^2+\pen(m)+\frac{\|\beta\|^2}{n^2},
\end{equation}
and we obtain, for all $m\in\Mnr$
\[\E_{\mathbf X}\left[\|\beta-\hat\beta_{\hat m}\|_{\Gamma_n}^2\right]\leq C(\theta)\left(\|\beta-\hat\Pi_m\beta\|_{\Gamma_n}^2+\pen(m)\right)+\frac{C''(\theta,p)}{n}(\sigma^2+\|\beta\|^2).\]
The proof is completed by taking expectation on both sides of the last inequality. 
\end{proof}
\end{proposition}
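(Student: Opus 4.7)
The plan is to reduce the problem to a fixed-design regression by conditioning on the sample $\mathbf{X} = (X_1, \ldots, X_n)$, and then invoke a classical model selection oracle inequality in that conditional setting. Once $\mathbf{X}$ is frozen, the approximation spaces $\hat S_m$ become deterministic (each is measurable with respect to $\mathbf{X}$), the empirical norm $\|\cdot\|_{\Gamma_n}$ is exactly the fixed-design norm, and our estimator is the least-squares projection of $Y$ onto $\hat S_{\hat m}$ with $\hat m = \hmkv$ chosen by minimizing a penalized contrast of the standard Baraud form.

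I would apply Corollary 3.1 of \cite{baraud_model_2000} with $q=1$ to the conditional model collection $\{\hat S_m : m \in \widehat{\Mnr}\}$. The independence of $\varepsilon$ from $\mathbf{X}$, combined with assumption \textbf{H1} (which gives $\tau_p = \E|\varepsilon|^p < \infty$ for some $p>4$), provides the moment control Baraud requires; the penalty $\pen^{(kv)}(m) = (1+\theta)\sigma^2 m/n$ has the right linear-in-$m$ shape. This yields almost surely
\[
\EX\bigl[\|\beta - \hat\beta_{\hat m}\|_{\Gamma_n}^2\bigr] \leq C(\theta) \inf_{m \in \widehat{\Mnr}} \left( \|\beta - \hat\Pi_m \beta\|_{\Gamma_n}^2 + \pen^{(kv)}(m) \right) + \frac{C'(\theta, p) \sigma^2}{n},
\]
where the series $\sum_m m^{-(p/2-2)}$ that appears in Baraud's residual term converges because $p > 4$.

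Next I would transfer the infimum from the random set $\widehat{\Mnr}$ to the deterministic set $\Mnr$. The mechanism is the threshold built into $\hat N_n$: for every $j > \hat N_n$ one has $\hat\lambda_j \leq \mathfrak{s}_n = O(n^{-2})$, so truncating the model at $\hat N_n$ costs at most $\sum_{j > \hat N_n} \hat\lambda_j \langle \beta, \hat\psi_j\rangle^2 \leq \mathfrak{s}_n \|\beta\|^2 = O(\|\beta\|^2/n^2)$ in approximation error. A parallel comparison of the linear penalties lets me replace $\inf_{m \in \widehat{\Mnr}}$ by $\inf_{m \in \Mnr}$ at the cost of an additive $O((\sigma^2+\|\beta\|^2)/n)$ remainder. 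Taking expectation with respect to $\mathbf{X}$ on both sides then produces the unconditional inequality claimed in the proposition.

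The main obstacle is the second step: handling the randomness of the model collection. Baraud's result is formulated for a non-random family of approximation spaces, so its validity under conditioning must be justified through the $\mathbf{X}$-measurability of each $\hat S_m$ and the independence of $\varepsilon$ from $\mathbf{X}$. Moreover, the bound produced is conditional on $\mathbf{X}$, and one must verify that the passage from $\widehat{\Mnr}$ to $\Mnr$ can be carried out uniformly over realizations without accumulating uncontrolled error. The smallness of the threshold $\mathfrak{s}_n$ is precisely what makes this possible, since it ensures that cutting the basis at $\hat N_n$ discards only components that contribute negligibly to the empirical norm of $\beta$.
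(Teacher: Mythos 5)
Your proposal follows essentially the same route as the paper's proof: conditioning on $\mathbf{X}$ to make the collection $\{\hat S_m\}$ deterministic, applying Corollary 3.1 of Baraud (2000) with $q=1$ (using \textbf{H1} and $p>4$ for the residual series), transferring the infimum from $\widehat{\Mnr}$ to $\Mnr$ via the threshold $\mathfrak{s}_n$ on the discarded eigenvalues, and integrating out $\mathbf{X}$. The argument is correct and matches the paper's in all essential respects.
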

\subsubsection{Bound on the empirical risk with unknown noise variance}
\begin{proposition}\label{risknvi}Suppose that Assumption~\textbf{H1} is fulfilled. We have
\begin{equation*}
\mathbb E[\|\tbuv-\beta\|_{\Gamma_n}^2]\leq C\inf_{m\in\Mnr}\left\{\mathbb E[\|\beta-\hat\Pi_m\beta\|_{\Gamma_n}^2]+\pen^{(uv)}(m)\right\}+\frac{C'}{n}(\sigma^2+\|\beta\|^2+\tau_p^{2/p}), \end{equation*}
with $C,C'>0$ depends only on $\theta$, $p$ and $\delta$. 

\end{proposition}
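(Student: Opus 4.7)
The plan is to mimic the proof of Proposition~\ref{risknvc} but to control the data-driven penalty $\penh(m)=\theta(1+\delta)\hat\sigma_m^2 m/n$ in terms of its deterministic counterpart $\penuv(m)$. The constraint $\hat N_n\le n/[\theta(1+2\delta)]$ added to the definition of $\widehat{\Mnr}$ is what keeps $\theta(1+\delta)m/n \le (1+\delta)/(1+2\delta)<1$ for every $m\in\widehat{\Mnr}$, so the ratios arising below are uniformly bounded and never degenerate.

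I start from the defining inequality for $\hat m^{(uv)}$ recalled in (\ref{critvi}): for every $m\in\widehat{\Mnr}$,
$$\gamma_n(\tbuv)\Bigl(1+\theta(1+\delta)\frac{\hat m^{(uv)}}{n}\Bigr)\le \gamma_n(\hat\beta_m)\Bigl(1+\theta(1+\delta)\frac{m}{n}\Bigr).$$
Using the identity $\gamma_n(\hat\beta_m)=\gamma_n(\beta)+2\nu_n(\beta-\hat\beta_m)+\|\beta-\hat\beta_m\|_{\Gamma_n}^2$, where $\nu_n(h):=n^{-1}\sum_{i=1}^n\varepsilon_i\langle h,X_i\rangle$, and the analogous expansion for $\tbuv$, the $\gamma_n(\beta)$ pieces almost cancel up to a factor of order $(m-\hat m^{(uv)})/n$ times $\gamma_n(\beta)$, which concentrates around $\sigma^2$ with fluctuations of order $n^{-1/2}\tau_p^{2/p}$ thanks to Assumption~\textbf{H1}. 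After rearrangement one arrives at a bound of the shape
$$\|\beta-\tbuv\|_{\Gamma_n}^2+\penuv(\hat m^{(uv)})\le \|\beta-\hat\beta_m\|_{\Gamma_n}^2+\penuv(m)+2\nu_n(\tbuv-\hat\beta_m)+R_n,$$
where $R_n$ gathers the remainder terms $[\gamma_n(\beta)-\sigma^2](m+\hat m^{(uv)})/n$ and the cross products between $\nu_n$ and the penalty factors.

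The heart of the argument is then the control of the empirical process $\nu_n$. I split the cross term by a Young inequality
$$2\nu_n(\tbuv-\hat\beta_m)\le\eta\,\|\tbuv-\hat\beta_m\|_{\Gamma_n}^2+\frac{1}{\eta}\sup_{h\in\hat S_{m\vee\hat m^{(uv)}},\ \|h\|_{\Gamma_n}=1}\nu_n(h)^2,$$
which reduces the problem to bounding $\E[\sup_{h\in\hat S_{m'},\,\|h\|_{\Gamma_n}=1}\nu_n(h)^2]$ uniformly in $m'\in\widehat{\Mnr}$. Orthonormalising the $\hat\psi_j$ for the $\Gamma_n$-geometry (which is licit exactly because $\hat\lambda_{m'}\ge\mathfrak s_n>0$ by construction of $\widehat{\Mnr}$), this conditional supremum equals $\sigma^2 m'/n$ in expectation, and a Bernstein/Rosenthal-type concentration using~\textbf{H1} yields deviations of the required size $\tau_p^{2/p}/n$.

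The extra slack $\theta>4$, versus $\theta>0$ in the known-variance case, is what lets me absorb simultaneously the Young residual $\eta\|\tbuv-\hat\beta_m\|_{\Gamma_n}^2$ into the left-hand side and the fluctuations of $\hat\sigma_m^2$ around $\sigma^2$ into $\penuv$, the whole at the cost of enlarging the universal constants~$C,C'$. The final step transfers the infimum from $\widehat{\Mnr}$ to $\Mnr$ through inequality~(\ref{hMnr2Mnr}) as in Proposition~\ref{risknvc}, and taking expectations produces the announced bound with the additional $\tau_p^{2/p}$ contribution. The principal obstacle I anticipate is the uniform control, over $m\in\widehat{\Mnr}$, of the empirical process $\sup_{h\in\hat S_m,\,\|h\|_{\Gamma_n}=1}|\nu_n(h)|$ on the random, data-dependent eigen-basis $(\hat\psi_j)_{j\le m}$: classical model-selection inequalities apply on fixed subspaces, and handling the randomness of $\hat S_m$ is precisely the additional difficulty already emphasised at the end of Section~\ref{estimation}.
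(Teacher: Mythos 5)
Your overall architecture coincides with the paper's: a contrast/penalty decomposition, Young's inequality on the cross term $2\nu_n(\cdot)$, a conditional-on-$\mathbf{X}$ treatment of the empirical process, absorption of the Young residual using $\theta>4$, and the final transfer from $\widehat{\Mnr}$ to $\Mnr$ via~(\ref{hMnr2Mnr}). The differences (working with the multiplicative form of~(\ref{critvi}) and comparing to $\gamma_n(\hat\beta_m)$ rather than to $\gamma_n(\hat\Pi_m\beta)$, and controlling $\hat\sigma^2_m-\sigma^2$ through $\gamma_n(\beta)-\sigma^2$ rather than through Lemma~\ref{pen}) are cosmetic and workable, provided you also invoke $\hat N_n\leq 20\sqrt{n/\ln^3 n}$ --- not merely $\hat N_n\leq n/\theta(1+2\delta)$ --- so that the remainder $[\gamma_n(\beta)-\sigma^2](m+\hmuv)/n$, whose first factor is only of order $n^{-1/2}$ in expectation, ends up of order $\tau_p^{2/p}/n$ as required.

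The genuine gap is in the treatment of the variance term. You claim the problem ``reduces to bounding $\E[\sup_{h\in\hat S_{m'},\,\|h\|_{\Gamma_n}=1}\nu_n(h)^2]$ uniformly in $m'$''. It does not: the relevant supremum is over $\hat S_{m\vee\hmuv}$ with $\hmuv$ depending on the noise, so a bound valid for each fixed $m'$ cannot simply be evaluated at $m'=\hmuv$, while a genuinely uniform bound $\E[\max_{m'\in\widehat{\Mnr}}\sup_{\hat S_{m'}}\nu_n^2]$ would be of order $\sigma^2\hat N_n/n$, far too large to absorb. What is needed --- and what the paper's Lemma~\ref{lem:cont_nu_n} supplies --- is the summable deviation bound $\sum_{m'\in\widehat{\Mnr}}\E_\mathbf{X}[(\sup_{f\in\hat S_{m\vee m'},\,\|f\|_{\Gamma_n}=1}\nu_n^2(f)-p(m,m'))_+]\leq C(p,\delta)\sigma^2/n$ with $p(m,m')\leq\penuv(m)+\penuv(m')$, the second penalty being cancelled by the $\penuv(\hmuv)$ you keep on the left-hand side. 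This is obtained by conditioning on $\mathbf{X}$, writing $\sup_{f\in\hat S_{m'},\,\|f\|_{\Gamma_n}=1}\nu_n^2(f)=n^{-1}\bar\varepsilon'\Pi_{\hat s_{m'}}\bar\varepsilon$, applying Baraud's Corollary 5.1 to get a tail of order $\tau_p\sigma^p m'/x^{p/2}$, and summing over $m'$; the convergence of that sum is precisely where $p>4$ in \textbf{H1} enters. Note finally that the ``principal obstacle'' you anticipate --- the randomness of $\hat S_m$ --- is harmless at this stage: conditionally on $\mathbf{X}$ the subspaces $\hat s_{m'}\subset\R^n$ are deterministic and independent of $\bar\varepsilon$, so the fixed-design machinery applies verbatim; the randomness of the basis only becomes a genuine difficulty later, when passing from $\|\cdot\|_{\Gamma_n}$ to $\|\cdot\|_{\Gamma}$.
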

\begin{proof}
Because of the random penalty, we cannot proceed as in the proof of Proposition~\ref{risknvc}. The following proof is based on contrast decomposition and control of the remaining empirical process. More precisely, by definitions of $\hat m^{(uv)}$ and $\hat\beta_m$:
\begin{equation*}\gamma_n(\tbuv)-\gamma_n(\hat\Pi_m\beta)\leq \penh(m)-\penh(\hat m^{(uv)}),\end{equation*}
and
\[\gamma_n(\tbuv)-\gamma_n(\hat\Pi_m\beta)=\|\beta-\tbuv\|_{\Gamma_n}^2-\|\beta-\hat \Pi_m\beta\|_{\Gamma_n}^2+2\nu_n(\hat\Pi_m\beta-\tbuv),\]
with:
\[\nu_n(t):=\frac 1n\sum_{i=1}^n\varepsilon_i<t,X_i>,\]
an empirical linear centred process. 
Then: 
\begin{equation}\label{majocont}\|\beta-\tbuv\|_{\Gamma_n}^2\leq \|\beta-\hat \Pi_m\beta\|_{\Gamma_n}^2+\penh(m)-\penh(\hat m^{(uv)})+2\nu_n(\tbuv-\hat\Pi_m\beta).\end{equation}
The first step is to replace the random function $\penh$ by its empirical counterpart $\pen^{(uv)}$, this can be done by using the results of Lemma~\ref{pen} in the Appendix directly in Equation~(\ref{majocont}):
\begin{eqnarray}\E_\mathbf{X}[\|\beta-\tbuv\|_{\Gamma_n}^2]&\leq&\left(1+\kappa\frac mn\right)\|\beta-\hat \Pi_m\beta\|_{\Gamma_n}^2+\E_\mathbf{X}[\pen^{(uv)}(m)-\pen^{(uv)}(\hat m^{(uv)})]\nonumber\\
&&\hspace{-4.5cm}\label{majocont2}+\E_\mathbf{X}\left[2\left(1+\frac{\kappa\hmuv}{n}\right)\nu_n(\tbuv-\hat\Pi_m\beta)\right]+\frac{\tau_p^{2/p}+\|\beta\|_\Gamma^2+\sigma^2}{n},\end{eqnarray}
where $\kappa:=2(\theta+1)$.

Then the last step consists in controlling the empirical linear process $\nu_n$ on $\hat S_{m\vee \hat m}$. Remark that for all $\delta>0$, for all $m\in\widehat{\Mnr}$,
\begin{equation}\label{majocont3}
2\nu_n(\tbuv-\hat\Pi_m\beta)\leq \frac1\theta \|\tbuv-\hat\Pi_m\beta\|_{\Gamma_n}^2+\theta\sup_{\substack{f\in \hat S_{\hmuv\vee m}\\\|f\|_{\Gamma_n}=1}}\nu_n^2(f),
\end{equation}
since for all $x,y\in \R$ and $\theta>0$, $2xy\leq \theta^{-1} x^2+\theta y^2$.\\
Let $ p(m,m'):=2(1+\delta)\frac{m\vee m'}{n}\sigma^2$, remark that $\pen(m)+\pen(m')\geq p(m,m')$. Then, since $\theta>4$ and $\hat N_n\leq n/\kappa$, gathering equations~(\ref{majocont2}) and~(\ref{majocont3}) we obtain:
\begin{eqnarray*}
\left(1-\frac4\theta\right)\E_\mathbf{X}\left[\|\tbuv-\beta\|_{\Gamma_n}^2\right]&\leq& \left(2+\frac4\theta\right)\|\beta-\hat\Pi_m\beta\|_{\Gamma_n}^2\\
&&\hspace{-3cm}+2\penuv(m)+2\theta\EX\left[\left(\sup_{\substack{f\in \hat S_{\hmuv\vee m}\\\|f\|_{\Gamma_n}=1}}\nu_n^2(f)-p(m,\hmuv)\right)_+\right].
\end{eqnarray*} 
Then the last step is to bound the variations of $\sup_{\substack{f\in \hat S_{\hmuv\vee m}\\\|f\|_{\Gamma_n}=1}}\nu_n^2(f)$ (which can be seen as a variance term) around $p(m,m')$, the results comes from Lemma~\ref{lem:cont_nu_n} detailed below:
\[\E_\mathbf{X}\left[\|\tbuv-\beta\|_{\Gamma_n}^2\right]\leq C(\theta)\min_{m\in\widehat{\Mnr}} \left\{\|\beta-\hat\Pi_m\beta\|_{\Gamma_n}^2+\penuv(m)\right\}+\frac{C(p,\delta)}{n}\sigma^2.\] 
Then we conclude as in the proof of Proposition~\ref{risknvc} by Inequality~(\ref{hMnr2Mnr}).  
\end{proof}
For sake of clarity, Lemma~\ref{lem:cont_nu_n}, which is the key of the previous result, is given below.   
\begin{lemma}\label{lem:cont_nu_n} Suppose that Assumption~\textbf{H1} is fulfilled. Let $p(m,m')=2(1+\delta)\frac{m\vee m'}{n}\sigma^2$,  then for all $m\in\widehat{\Mnr}$,
\[\sum_{m'\in\widehat{\Mnr}}\E_\mathbf{X}\left[\left(\sup_{\substack{f\in \hat S_{m\vee m'}\\\|f\|_{\Gamma_n}=1}}\nu_n^2(f)-p(m,m')\right)_+\right]\leq \frac{C(p,\delta)}{n}\sigma^2.\]
\end{lemma}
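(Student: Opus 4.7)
The plan is to compute the supremum explicitly in the empirical eigenbasis, then condition on $\mathbf{X}$ so that the problem becomes a tail estimate for a chi-square-like functional of the noise, and finally sum the resulting bounds by exploiting the moment hypothesis \textbf{H1}.

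First I would reduce the supremum to a closed-form expression. Writing any $f\in\hat S_{m\vee m'}$ in the orthonormal basis $(\hat\psi_j)_{j\le D}$ with $D:=m\vee m'$, one has $\|f\|_{\Gamma_n}^2=\sum_{j=1}^D\hat\lambda_j a_j^2$ (because $\langle\hat\psi_j,\Gamma_n\hat\psi_k\rangle=\hat\lambda_k\delta_{jk}$) and $\nu_n(f)=\sum_j a_j\nu_n(\hat\psi_j)$, so that a Cauchy--Schwarz calculation gives
\[\sup_{f\in\hat S_{D},\,\|f\|_{\Gamma_n}=1}\nu_n^2(f)=\sum_{j=1}^{D}\frac{\nu_n(\hat\psi_j)^2}{\hat\lambda_j}.\]
Setting $u_{ij}:=\langle\hat\psi_j,X_i\rangle/\sqrt{n\hat\lambda_j}$, the very same eigenvector identity yields $\sum_i u_{ij}u_{ik}=\delta_{jk}$, so that the variables $Z_j:=\sigma^{-1}\sum_i u_{ij}\varepsilon_i$ are orthonormal linear functionals of $\varepsilon/\sigma$ and
\[\sum_{j=1}^D\frac{\nu_n(\hat\psi_j)^2}{\hat\lambda_j}=\frac{\sigma^2}{n}\,T_D,\qquad T_D:=\sigma^{-2}\,\varepsilon^\top A\,\varepsilon,\]
where $A=A(\mathbf{X})=UU^\top$ is a random orthogonal projection of rank $D$, and $\mathbb{E}_\mathbf{X}[T_D]=D$.

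Second, I would control the positive deviation of $T_D$ above $2(1+\delta)D$ by moment methods. Using the elementary inequality $(x-a)_+\le x_+^{p/2}/a^{p/2-1}$ applied to $x=T_D-D$ and $a=(1+2\delta)D$,
\[\mathbb{E}_\mathbf{X}\bigl[(T_D-2(1+\delta)D)_+\bigr]\le\frac{\mathbb{E}_\mathbf{X}\bigl[|T_D-D|^{p/2}\bigr]}{((1+2\delta)D)^{p/2-1}}.\]
For the numerator I would split $T_D-D$ into its diagonal part $\sum_i A_{ii}(\varepsilon_i^2-\sigma^2)/\sigma^2$ (a sum of independent centred variables) and its off-diagonal part $2\sum_{i<k}A_{ik}\varepsilon_i\varepsilon_k/\sigma^2$ (a degenerate $U$-statistic), and apply Rosenthal's inequality conditionally on $\mathbf{X}$ to each. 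The projection structure gives $\sum_i A_{ii}=D$, $\sum_{i,k}A_{ik}^2=\operatorname{tr}(A^2)=D$, and $\|A\|_{\mathrm{op}}\le1$, which leads to a bound of the form $\mathbb{E}_\mathbf{X}[|T_D-D|^{p/2}]\le C(p,\sigma,\tau_p)D^{p/4}$ uniformly in $\mathbf{X}$. Combined with the previous display this yields
\[\mathbb{E}_\mathbf{X}\bigl[(T_D-2(1+\delta)D)_+\bigr]\le \frac{C(p,\delta,\sigma,\tau_p)}{D^{p/4-1}}.\]

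Finally, I would sum over $m'\in\widehat{\mathcal{M}_n}$, distinguishing the contributions $m'\le m$ (which all give the same $D=m$ term, multiplied by $m$) from $m'>m$ (which give the $D=m'$ terms); multiplying by the prefactor $\sigma^2/n$ coming from Step 1 and using that the tail series converges thanks to $p>4$ then produces the announced bound $C(p,\delta)\sigma^2/n$. The main obstacle in this plan is Step 3: because \textbf{H1} provides only polynomial, not sub-Gaussian, tails on $\varepsilon$, one cannot invoke Hanson--Wright directly and must proceed through a Rosenthal-type moment inequality for the centred quadratic form; the key point there is to keep track of the projection structure of $A$ in order to obtain the crucial exponent $D^{p/4}$, rather than $D^{p/2}$, which is precisely what makes the final series summable under $p>4$.
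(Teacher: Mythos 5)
Your Step 1 is exactly the paper's reduction: the supremum equals $\frac{1}{n}\bar\varepsilon'\Pi_{\hat s_D}\bar\varepsilon$ with $\Pi_{\hat s_D}$ a random (but $\mathbf X$-measurable) orthogonal projection of rank $D=m\vee m'$, and conditioning on $\mathbf X$ turns the problem into a deviation bound for a quadratic form in $\varepsilon$. The gap is in Step 2. Your route is ``global $p/2$-th moment of $T_D-D$ plus Markov at level $(1+2\delta)D$'', and the moment bound $\E_{\mathbf X}[|T_D-D|^{p/2}]\le C\,D^{p/4}$ is indeed the correct order (it cannot be improved: already for Gaussian noise $T_D-D$ has fluctuations of order $\sqrt D$). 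But this yields $\E_{\mathbf X}[(T_D-2(1+\delta)D)_+]\le C\,D^{1-p/4}$, and the exponent $1-p/4$ is not good enough: summing over $m'\in\widehat{\Mnr}$ gives $m\cdot m^{1-p/4}=m^{2-p/4}$ for the terms $m'\le m$ and $\sum_{m'>m}(m')^{1-p/4}$ for the rest, and both are bounded only when $p\ge 8$ (for $p=5$, say, the total is of order $\hat N_n^{3/4}\sigma^2/n$, not $\sigma^2/n$). Since \textbf{H1} only provides $p>4$, your final summation step, which you justify by ``the tail series converges thanks to $p>4$'', does not close.

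The reason the paper's argument survives with $p>4$ is that it does not use a single moment-plus-Markov bound. It invokes Corollary 5.1 of Baraud (2000), a refined deviation inequality for quadratic forms of the shape
\[\Pb_{\mathbf X}\Bigl(\bar\varepsilon'\Pi_{\hat s_D}\bar\varepsilon\ge \sigma^2 D+2\sigma^2\sqrt{Dx}+\sigma^2 x\Bigr)\le C(p)\,\frac{\tau_p}{\sigma^p}\,\frac{D}{x^{p/2}},\]
which is proved by truncating $\varepsilon$ and treating the bounded part by a Bernstein-type argument and the heavy tail by a union bound over the $D$ directions. For deviations $s$ of order $D$ or larger this gives a tail $\asymp D/s^{p/2}$, i.e.\ it beats your bound $D^{p/4}/s^{p/2}$ by the factor $D^{p/4-1}$; integrating the tail then produces $(m\vee m')^{-p/2}$ up to low powers, which is summable as soon as $p>4$. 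So the missing ingredient in your plan is precisely this sharper, ``Bernstein-plus-polynomial-tail'' deviation inequality for the quadratic form (or an equivalent truncation argument of your own); Rosenthal applied once at moment order $p/2$ cannot recover it.
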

The proof of this lemma is given in Section~\ref{subsec:control_nun} and relies on results of Baraud~\cite{baraud_model_2000} based on Talagrand's Inequality. 
\subsection{Oracle inequality}
\label{subsec:oracle}

In this section, we derive an oracle-inequality for the risk associated to the prediction error. We define first an allipsoid of $\Ld$
\[\mathcal{W}_r^R:= \left\{f\in\Ld,\ \sum_{j\geq 1}j^r<f,\psi_j>^2\leq R^2\right\}.\]

\begin{theorem}\label{majo}
Suppose that assumptions~\textbf{H1},~\textbf{H2}, \textbf{H3} and \textbf{H4} hold and that the decreasing rate of $(\lambda_j)_{j \geq 1}$ is given by \textbf{(P)} or \textbf{(E)}. Then, for all slope function $\beta\in\Ld$, if $n\geq 6$:
\begin{eqnarray}\label{oracle}\E[\|\widetilde\beta-\beta\|_\Gamma^2]&\leq &C_1\left(\min_{m\in\Mnr}\left(\E[\|\beta-\hat\Pi_m\beta\|_\Gamma^2]+\E[\|\beta-\hat\Pi_m\beta\|_{\Gamma_n}^2]+\pen(m)\right)\right)+\frac{C_2}{n}\left(1+\|\beta\|^2\right)\end{eqnarray}
where $C_1>0$ and $C_2>0$ are independent of $\beta$ and $n$. 

If, in addition, $\beta\in\mathcal W_r^R$ --- with the condition $a+r>2$ in the polynomial case~\textbf{(P)} --- we have 
\begin{equation}\label{oracle2}\E[\|\widetilde\beta-\beta\|_\Gamma^2]\leq C_1'\left(\min_{m\in\Mnr}\left(\E[\|\beta-\hat\Pi_m\beta\|_\Gamma^2]+\pen(m)\right)\right)+\frac{C_2'}{n}\left(1+\|\beta\|^2\right),\end{equation}
where the constants $C_1'>0$ and $C_2'>0$ do not depend on $\beta$ or $n$. 
\end{theorem}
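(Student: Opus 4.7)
The starting point is Propositions~\ref{risknvc} and~\ref{risknvi}, which bound the empirical risk $\mathbb E[\|\widetilde\beta-\beta\|_{\Gamma_n}^2]$. The task is twofold: transfer these bounds from $\|\cdot\|_{\Gamma_n}$ to $\|\cdot\|_\Gamma$, and then specialise the bias term on the ellipsoid $\mathcal W_r^R$. The natural starting decomposition is
\[\|\widetilde\beta-\beta\|_\Gamma^2\leq 2\|\widetilde\beta-\hat\Pi_{\hat m}\beta\|_\Gamma^2+2\|\beta-\hat\Pi_{\hat m}\beta\|_\Gamma^2,\]
with the key observation that $\widetilde\beta-\hat\Pi_{\hat m}\beta$ belongs to the random space $\hat S_{\hat N_n}$, whose dimension is at most $\hat N_n\lesssim\sqrt{n/\ln^3(n)}$ and on which every eigenvalue of $\Gamma_n$ is at least $\mathfrak{s}_n\sim n^{-2}$. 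This is precisely the range on which $\Gamma_n$ can be expected to be comparable to $\Gamma$.

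I would therefore introduce the good event
\[\Omega_n:=\Bigl\{\tfrac12\|f\|_{\Gamma_n}^2\leq\|f\|_\Gamma^2\leq 2\|f\|_{\Gamma_n}^2\ \text{for every } f\in\hat S_{\hat N_n}\Bigr\}.\]
On $\Omega_n$, Propositions~\ref{risknvc} and~\ref{risknvi} directly give the $\|\cdot\|_\Gamma^2$ bound for the first piece, and the symmetric inequality $\mathbb E[\|\beta-\hat\Pi_m\beta\|_\Gamma^2]\leq 2\,\mathbb E[\|\beta-\hat\Pi_m\beta\|_{\Gamma_n}^2]+\text{residual}$ handles the bias term, producing~(\ref{oracle}). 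On $\Omega_n^c$ the explicit form~(\ref{def:hbetam}) of $\hat\beta_m$ together with $\hat\lambda_j\geq\mathfrak{s}_n$ yields a deterministic bound of the type $\|\widetilde\beta\|\leq Cn^2\|\hat g\|$, which multiplied by $\mathbb P(\Omega_n^c)$ is absorbed in the $n^{-1}(1+\|\beta\|^2)$ remainder. The probability $\mathbb P(\Omega_n^c)$ is polynomially small thanks to a concentration inequality for $\Gamma_n-\Gamma$ on the finite-dimensional subspace $\hat S_{\hat N_n}$, using the exponential-moment control provided by Assumption~\textbf{H2}; the calibration of $\hat N_n$ and $\mathfrak s_n$ is exactly what makes this estimate compatible with the crude blow-up on $\Omega_n^c$.

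For the ellipsoid bound~(\ref{oracle2}) one must compare $\hat\Pi_m$ with the deterministic projector $\Pi_m$ onto $S_m=\text{span}(\psi_1,\dots,\psi_m)$. Writing $\|\beta-\hat\Pi_m\beta\|_\Gamma^2\leq 2\|\beta-\Pi_m\beta\|_\Gamma^2+2\|(\hat\Pi_m-\Pi_m)\beta\|_\Gamma^2$, the perturbation term is controlled in expectation by standard perturbation-theoretic estimates involving the eigenvalue gaps $(\lambda_j-\lambda_k)^{-2}$; under Assumptions~\textbf{H3} and~\textbf{H4}, these quantities can be replaced by convergent sums of the form $\sum_j (j\lambda_j\ln^{1+\gamma}j)^{-1}$, which avoids the restrictive spacing hypothesis imposed in the usual literature. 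The remaining deterministic bias is $\|\beta-\Pi_m\beta\|_\Gamma^2=\sum_{j>m}\lambda_j\langle\beta,\psi_j\rangle^2$, which on $\mathcal W_r^R$ is bounded by $R^2\sup_{j>m}(\lambda_j j^{-r})$. Inserting the decays~\textbf{(P)} or~\textbf{(E)} and balancing against $\pen(m)\sim m/n$ over $m\in\mathcal M_n$ gives the announced rate; the condition $a+r>2$ in the polynomial case is precisely what ensures that the optimal $m$ lies in the admissible range $\mathcal M_n\subset\{m\lesssim\sqrt{n/\ln^3 n}\}$, so that the oracle inequality actually realises the minimax rate.

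The main obstacle I anticipate is the control of $\mathbb P(\Omega_n^c)$: it is a concentration statement for the random operator $\Gamma_n-\Gamma$ on a \emph{data-dependent} subspace, for which the fine tuning of the truncation thresholds $\hat N_n$ and $\mathfrak{s}_n$ is essential. A secondary but nontrivial difficulty is the perturbation comparison of $\hat\Pi_m$ with $\Pi_m$ on the ellipsoid, where Assumption~\textbf{H4} is what allows us to dispense with the usual eigenvalue-spacing hypothesis and still obtain a summable perturbation bound.
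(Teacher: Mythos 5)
Your overall strategy matches the paper's: restrict to a norm-equivalence event on the random space $\hat S_{\hat N_n}$ (the paper's $\Delta_n$, which is one-sided rather than two-sided), invoke Propositions~\ref{risknvc} and~\ref{risknvi} there, bound the complement using $\hat\lambda_j\geq\mathfrak{s}_n$ and the smallness of the bad event, and use perturbation theory for the refinement~(\ref{oracle2}). Two steps, however, do not work as written. First, your initial decomposition runs through $\hat\Pi_{\hat m}\beta$ at the \emph{selected} dimension, leaving the bias $\|\beta-\hat\Pi_{\hat m}\beta\|_\Gamma^2$. This vector does not belong to $\hat S_{\hat N_n}$, so the event $\Omega_n$ gives no conversion between its $\Gamma$- and $\Gamma_n$-norms (your ``symmetric inequality'' claim fails here), and even if it did, a bias evaluated at the random $\hat m$ cannot be folded into the oracle minimum over $m$. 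The paper avoids both problems by writing, for each fixed $m$, $\|\hat\beta_{\hat m}-\beta\|_\Gamma\leq\|\hat\beta_{\hat m}-\hat\Pi_m\beta\|_\Gamma+\|\beta-\hat\Pi_m\beta\|_\Gamma$: only the first term lies in the random finite-dimensional space and gets converted via $\Delta_n$, while the second is kept as the $\E[\|\beta-\hat\Pi_m\beta\|_\Gamma^2]$ term appearing explicitly in~(\ref{oracle}). Your route is repairable by one more triangle inequality (since $\hat\Pi_{\hat m}\beta-\hat\Pi_m\beta\in\hat S_{\hat N_n}$), but as stated there is a gap.

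Second, the complement bound: $Cn^2\|\hat g\|$ is not deterministic, so you cannot simply multiply it by $\Pb(\Omega_n^\complement)$; repairing this with Cauchy--Schwarz costs a square root of the probability, and $n^4\,\E[\|\hat g\|^4]^{1/2}\,\Pb(\Omega_n^\complement)^{1/2}\approx n^{4-3}=n$ with the available bound $\Pb(\Delta_n^\complement)\leq Cn^{-6}$ --- far from the required $O(1/n)$. The paper's Lemma~\ref{T1} instead splits $\hat\beta_{\hat m}=\hat\Pi_{\hat m}\beta+R_{\hat m}$ and exploits the independence of $\varepsilon$ from $\mathbf X$ together with the $\mathbf X$-measurability of $\Delta_n$, so that the expectation factorizes exactly as $\sigma^2$ times an $\mathbf X$-measurable quantity, yielding $20\rho(\Gamma)\sigma^2 n^{-1/2}\mathfrak{s}_n^{-1}\Pb(\Delta_n^\complement)$ with no Cauchy--Schwarz loss. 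Finally, for~(\ref{oracle2}) you do not need to balance the bias against $\pen(m)$ (that is Theorem~\ref{majo_risk}); what is needed is Lemma~\ref{control_biais}, which bounds the empirical bias by the theoretical one, $\E[\|\beta-\hat\Pi_m\beta\|_n^2]\leq 4\E[\|\beta-\hat\Pi_m\beta\|_\Gamma^2]+\tau_{m,n}$, and where $a+r>2$ serves to make the perturbation remainder $\tau_{m,n}$ of order $o(m/n)$, i.e.\ negligible against the penalty, rather than to place the optimal $m$ in the admissible range.
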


\hspace{1cm}
\begin{remark}The condition $a+r>2$ is verified as soon as $a\geq 2$ without condition on the regularity parameter $r$ of the slope $\beta$. Note that if $X$ is a Brownian motion, the sequence $(\lambda_j)_{j\geq 1}$ associated to the process $X$ verifies \textbf{(P)} with $a=2$. Then we do not need additional condition on $r$ if $X$ is smoother than the Brownian motion.    
\end{remark}
\paragraph{Sketch of proof.}
The core of the proof relies on the bounds on the empirical risk given in Proposition~\ref{risknvc}, for the known variance case, and Proposition~\ref{risknvi} for the unknown variance case.
Then it remains to replace the empirical risk appearing in propositions~\ref{risknvc} and \ref{risknvi} by the risk associated to the prediction error in order to obtain the final oracle-inequality. This is done with the results of Lemma~\ref{deltanb} which allows to control the set 
\begin{equation}\label{deltan}
 \Delta_n:=\lbrace\forall f\in\mathcal{\hat S}_n, \|f\|_\Gamma^2\leq \rho_0\|f\|_{\Gamma_n}^2\rbrace, 
 \end{equation}
 where $\rho_0>1$ is a constant and we define $\hat{\mathcal S}_n:=\hat S_{\hat N_n}$.

\begin{proof}

The following equality holds: 
 \begin{equation*}
 \E[\|\widetilde{\beta}-\beta\|_\Gamma^2]= \E[\|\widetilde{\beta}-\beta\|_\Gamma^2\mathbf{1}_{\Delta_n}]+\E[\|\widetilde{\beta}-\beta\|_\Gamma^2\mathbf{1}_{\Delta_n^\complement}],\end{equation*}
 where, for a set $A$, we denote by $A^\complement$ its complement.

Lemma~\ref{T1} in Section~\ref{proofs} allows to bound the second term of this inequality. Thus the end of the proof will be devoted to upper-bound the first term.

 We remark that, for all $m\in\Mnr$,
\begin{eqnarray}\|\hat{\beta}_{\hat{m}}-\beta\|_\Gamma\mathbf{1}_{\Delta_n}&\leq& \|\hat\beta_{\hat m}-\hat\Pi_m\beta\|_\Gamma\mathbf 1_{\Delta_n}+\|\beta-\hat\Pi_m\beta\|_\Gamma\nonumber\\
\label{finalvc}&\leq&\sqrt\rho_0\|\hat\beta_{\hat m}-\beta\|_{\Gamma_n}+\sqrt\rho_0\|\beta-\hat\Pi_m\beta\|_{\Gamma_n}+ \|\beta-\hat\Pi_m\beta\|_\Gamma .\end{eqnarray}

By propositions~\ref{risknvc} and~\ref{risknvi}, for all $m\in\Mnr$:
\[\mathbb E[\|\hat\beta_{\hat m}-\beta\|_{\Gamma_n}^2]\leq C(p,\theta,\delta,\sigma^2,\tau_p)\left(\mathbb E[\|\beta-\hat\Pi_m\beta\|_{\Gamma_n}^2]+\pen(m)+\frac{1+\|\beta\|^2}{n}\right),\]
 and by Equation~(\ref{finalvc})
\begin{eqnarray*}
\E[\|\hat{\beta}_{\hat{m}}-\beta\|_\Gamma^2\mathbf{1}_{\Delta_n}]&\hspace{-0.3cm}\leq&\hspace{-0.3cm}C(p,\theta,\delta,\sigma^2,\tau_p,\rho_0)\left(\E[\|\beta-\hat\Pi_m\beta\|_{\Gamma_n}^2]+\E[\|\beta-\hat\Pi_m\beta\|_\Gamma^2]+\pen(m)+\frac{1+\|\beta\|^2}{n}\right),
\end{eqnarray*}
and Equation~(\ref{oracle}) follows.

Then Equation~(\ref{oracle2}) comes from Equation~(\ref{oracle}) and Lemma~\ref{control_biais}.
\end{proof}

\subsection{Convergence rates}
\label{subsec:convergence_rates}

As a direct consequence of the oracle-inequality given in Theorem~\ref{majo}, associated with the control of the random projector on the spaces $\hat S_m$ given in Lemma~\ref{normnGamma}, we derive uniform bounds on the risk of our estimators on the ellipsoids $\mathcal{W}_r^R$.


\begin{theorem}\label{majo_risk}Assume that the assumptions of Theorem~\ref{majo} are fulfilled. For all $r>0$ and $R>0$:
\begin{description}
\item[Polynomial case. ] If \textbf{(P)} holds with $a+r>2$ then:
\begin{equation}\label{vit_P}
\sup_{\beta\in \mathcal W_r^R}\E[\|\widetilde{\beta}-\beta\|_\Gamma^2]\leq C_Pn^{-(a+r)/(a+r+1)};
\end{equation}
\item[Exponential case. ]  If \textbf{(E)} holds then:
\begin{equation}\label{vit_E}
\sup_{\beta\in\mathcal W^R_r}\E[\|\widetilde{\beta}-\beta\|_\Gamma^2]\leq C_En^{-1}(\ln n)^{1/a},
\end{equation}
with $C_P$ and $C_E$ independent of $n$.
\end{description}

\hspace{1cm}
\begin{remark} In the case where the noise $\varepsilon$ is Gaussian, the bounds~(\ref{vit_P}) and (\ref{vit_E}) coincide with the minimal bounds given by Cardot and Johannes~\cite{cardot_thresholding_2010}.
\end{remark}

\begin{proof} 
Let us start with the polynomial case \textbf{(P)}.
By Theorem~\ref{majo}, we have: 
\[\E[\|\widetilde\beta-\beta\|_\Gamma^2]\leq C\left(\min_{m\in\Mnr}\left(\E[\|\beta-\hat\Pi_m\beta\|_\Gamma^2]+\pen(m)\right)+\frac1n(1+\|\beta\|^2)\right), \]
with $C$ independent of $\beta$ and $n$.
Denote by $\hat\Pi_m$ the orthogonal projector onto $S_m=\text{Span}\{\psi_1,\hdots,\psi_m\}$, by Lemma~\ref{normnGamma} 
\begin{eqnarray*}\|\beta-\hat\Pi_m\beta\|_\Gamma^2&\leq& 2\|\beta-\Pi_m\beta\|_\Gamma^2+C_1\frac{\ln^3 m}{n}m^{\max\{(1-r)_+,2-a-r\}}+C_2\frac{\ln^5 m\ln^4 n}{n^2}m^{\max\{(2-a+(7-r)_+)_+,2-a+(5-r)_+\}}, 
\end{eqnarray*}
with $C_1, C_2>0$, independent of $\beta$ and $n$. 
Now since $\beta\in\mathcal W_r^R$,
\[\E[\|\beta-\Pi_m\beta\|_\Gamma^2]=\sum_{j\geq 1}\lambda_j<\beta,\psi_j>^2\leq m^{-a-r}\sum_{j\geq m}j^r<\beta,\psi_j>^2\leq R m^{-a-r}.\]

We can see easily that it is possible to define a sequence of integers $(m_n^*)_{n\in\N^*}$ such that
\[m_n^*\asymp n^{1/a+r+1}\text{ and }m_n^*\leq N_n\text{ for all }n\in\N^*,\]
where for two sequences $(a_k)_{k\geq 1}$ and $(b_k)_{k\geq 1}$, we note $a_k\lesssim b_k$ if there exists some constant $c>0$ such that, for all $k\leq 1$, $a_k\leq c b_k$ and we note also $a_k\asymp b_k$ if $a_k\lesssim b_k $ and $b_k\lesssim a_k$.
Now considerations above lead us to
\[\sup_{\beta\in\mathcal W_r^R}\E[\|\beta-\hat\Pi_{m_n^*}\beta\|_\Gamma^2]\lesssim n^{-\frac{a+r}{a+r+1}}, \]
as soon as $a+r>2$ and in addition
\[\pen(m_n^*)\lesssim n^{-\frac{a+r}{a+r+1}}, \]
which leads to the expected bound.

The exponential case \textbf{(E)} is treated similarly with $m_n^*\asymp \ln^{1/a} n$.
\end{proof}
\end{theorem}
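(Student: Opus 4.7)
}
The starting point is the oracle inequality~(\ref{oracle2}) of Theorem~\ref{majo}, which already reduces the task to choosing a clever model index $m_n^\ast\in\Mnr$ that balances the deterministic bias $\E[\|\beta-\hat\Pi_m\beta\|_\Gamma^2]$ with the penalty $\pen(m)\asymp m/n$. Since the projector $\hat\Pi_m$ is random (built from $\hat\psi_1,\dots,\hat\psi_m$), the first job is to translate this random bias into a quantity governed by the deterministic basis $(\psi_j)$ and the ellipsoid condition $\sum_j j^r\langle\beta,\psi_j\rangle^2\leq R^2$. This is exactly the role of Lemma~\ref{normnGamma}, which controls $\|\beta-\hat\Pi_m\beta\|_\Gamma^2$ by $2\|\beta-\Pi_m\beta\|_\Gamma^2$ plus lower-order terms of order $n^{-1}\ln^3 m\cdot m^{\max\{(1-r)_+,2-a-r\}}$ and $n^{-2}(\cdots)$, where $\Pi_m$ is the orthogonal projector onto $S_m=\mathrm{span}\{\psi_1,\dots,\psi_m\}$.

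Second, I would bound the deterministic bias on the ellipsoid by the standard argument
\[
\E[\|\beta-\Pi_m\beta\|_\Gamma^2]=\sum_{j>m}\lambda_j\langle\beta,\psi_j\rangle^2\leq \bigl(\sup_{j>m}\lambda_j j^{-r}\bigr)\sum_{j>m}j^r\langle\beta,\psi_j\rangle^2,
\]
which, in case \textbf{(P)}, yields a bound of order $R^2\, m^{-(a+r)}$, and in case \textbf{(E)}, a bound of order $R^2\exp(-m^a)m^{-r}$. Combined with the penalty $\pen(m)\lesssim m/n$, balancing bias and variance produces the optimal choice $m_n^\ast\asymp n^{1/(a+r+1)}$ in the polynomial case and $m_n^\ast\asymp(\ln n)^{1/a}$ in the exponential case.

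Third, I must check that $m_n^\ast$ actually lies in $\Mnr$ (and hence, via the inequality~(\ref{hMnr2Mnr}) already used in the proofs of Propositions~\ref{risknvc}--\ref{risknvi}, in $\widehat{\Mnr}$ with high enough probability), so that the infimum in the oracle inequality may be taken at $m_n^\ast$. In case \textbf{(P)} this reduces to checking $n^{1/(a+r+1)}\leq 20\sqrt{n/\ln^3 n}$ and $\lambda_{m_n^\ast}\gtrsim n^{-2}$, both of which follow easily from $a+r>1$, and analogously in case \textbf{(E)} using the exponential decay of $\lambda_j$.

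The main obstacle is step one: the extra random-basis remainder terms coming from Lemma~\ref{normnGamma} must be shown negligible compared to the main rate. In the polynomial case the dominant remainder is $n^{-1}\ln^3m\cdot m^{\max\{(1-r)_+,2-a-r\}}$; the condition $a+r>2$ is precisely what makes both exponents non-positive at $m=m_n^\ast$, so this remainder is absorbed into $O(n^{-(a+r)/(a+r+1)})$ up to a logarithmic factor that can be swallowed in the constant $C_P$. In the exponential case $a+r$ is effectively arbitrarily large, so the same check is trivial, and the rate is driven entirely by the penalty $m_n^\ast/n\asymp n^{-1}(\ln n)^{1/a}$. Once these two ingredients are combined with the $n^{-1}(1+\|\beta\|^2)$ term from~(\ref{oracle2})—which is of smaller order than both target rates—the uniform bounds~(\ref{vit_P}) and~(\ref{vit_E}) follow.
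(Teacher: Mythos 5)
Your plan follows the paper's proof essentially verbatim: the same oracle inequality from Theorem~\ref{majo}, the same reduction via Lemma~\ref{normnGamma} to the deterministic bias $\|\beta-\Pi_m\beta\|_\Gamma^2\lesssim R\,m^{-(a+r)}$, the same balancing choice $m_n^*\asymp n^{1/(a+r+1)}$ (resp.\ $(\ln n)^{1/a}$), and the same verification that $m_n^*$ belongs to $\Mnr$. One small imprecision: the exponent $\max\{(1-r)_+,2-a-r\}$ in the first remainder of Lemma~\ref{normnGamma} is \emph{not} non-positive when $r<1$ (it equals $1-r>0$ there, since $a>1$ forces $2-a-r<1-r$); the term is nevertheless negligible because $n^{-1}(m_n^*)^{1-r}\ln^3 m_n^*\asymp n^{-(a+2r)/(a+r+1)}\ln^3 n=o\bigl(n^{-(a+r)/(a+r+1)}\bigr)$ for every $r>0$, so your conclusion stands and this is not a gap.
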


\section{Numerical results}
\label{simulation}
\subsection{Simulation method}
Following the method proposed by Hall and Hosseini-Nasab~\cite{hall_properties_2006}, we simulate the random function $X$ in the following way
\begin{equation}\label{simu_X}
	X=\sum_{j=1}^J\sqrt{\lambda_j}\xi_j\psi_j,
\end{equation}
where, for all $j\geq 1$, $\psi_j(x)=\sqrt{2}\sin(\pi(j-0.5)x)$ and $\{\xi_j,\ j=1,...,J\}$ is independent and follows the standard normal distribution. This sequence of functions $(\psi_j)_{j\geq 1}$ has been chosen so that if $J$ is sufficiently high and if $\lambda_j=(j-0.5)^{-2}\pi^{-2}$, we obtain a Brownian motion (see~Ash and Gardner~\cite{ash_topics_1975}). In order to see how the decreasing rate of $(\lambda_j)_{j\geq 1}$ influences the estimation, we take three different sequences: 
\[\lambda_j^{P1}=j^{-2},\ \lambda_j^{P2}=j^{-3}\text{ and }\lambda_j^{E}=e^{-j}.\]
It is interesting to note from Equation~(\ref{simu_X}) that the higher the rate of decrease of the $\lambda_j$'s, the better the regularity of the function $X$.

The function $X$ is then discretized over $p=100$ equispaced points \\$\left\{t_j=\frac{j-1}{p}, j=1,...,p\right\}$. We take $\varepsilon\sim\mathcal{N}(0,\sigma^2)$ and $\sigma^2=0.01$. We consider here two different slope functions
\[\beta_1(x)=\ln(15x^2+10)+\cos(4\pi x)\text{ (see \cite{cardot_functional_1999}) and }\beta_2(x)=e^{(x-0.3)^2/0.05}\cos(4\pi x).\]

\subsection{Comparison with cross validation}
We compare our dimension selection criterion with two cross validation criteria frequently used in practice. The first method consists in minimizing 
\[GCV(m):=\frac{\sum_{i=1}^n(Y_i-\hat Y_i)}{\left(1-\text{tr}(\mathbf{H}_m)/n\right)^2},\]
where $ \hat Y_i:=\int_0^1\hat\beta_m(t) X_i(t)dt$ and $\mathbf H_m$ is the classical Hat matrix defined by $\hat{\mathbf Y}=(\hat Y_1,..., \hat Y_n)'=\mathbf H_m \mathbf Y$. This criterion has been proposed in a similar context by Marx and Eilers~\cite{marx_flexible_1996} and in the context of functional linear models by Cardot~\textit{et al.}~\cite{cardot_spline_2003}. The second one consists in minimizing the criterion
\[CV(m):=\frac 1n \sum_{i=1}^n\left(Y_i-\hat Y_i^{(-i)}\right)^2,\]
which has been proposed in the framework of functional linear model by~Hall and Hosseini-Nasab~\cite{hall_properties_2006}. Here $\hat Y_i^{(-i)}$ is  the value of $Y_i$ predicted from the sample $\{(X_j,Y_j), j\neq i\}$. Note that an immediate drawback of this criterion is that it requires a much longer CPU time than the GCV criterion or our penalized criterion. 

\subsection{Results}

\begin{figure}
\begin{tabular}{ccc}
$\lambda_j^{P1}$&$\lambda_j^{P2}$& $\lambda_j^E$\\
\includegraphics[width=0.3\textwidth]{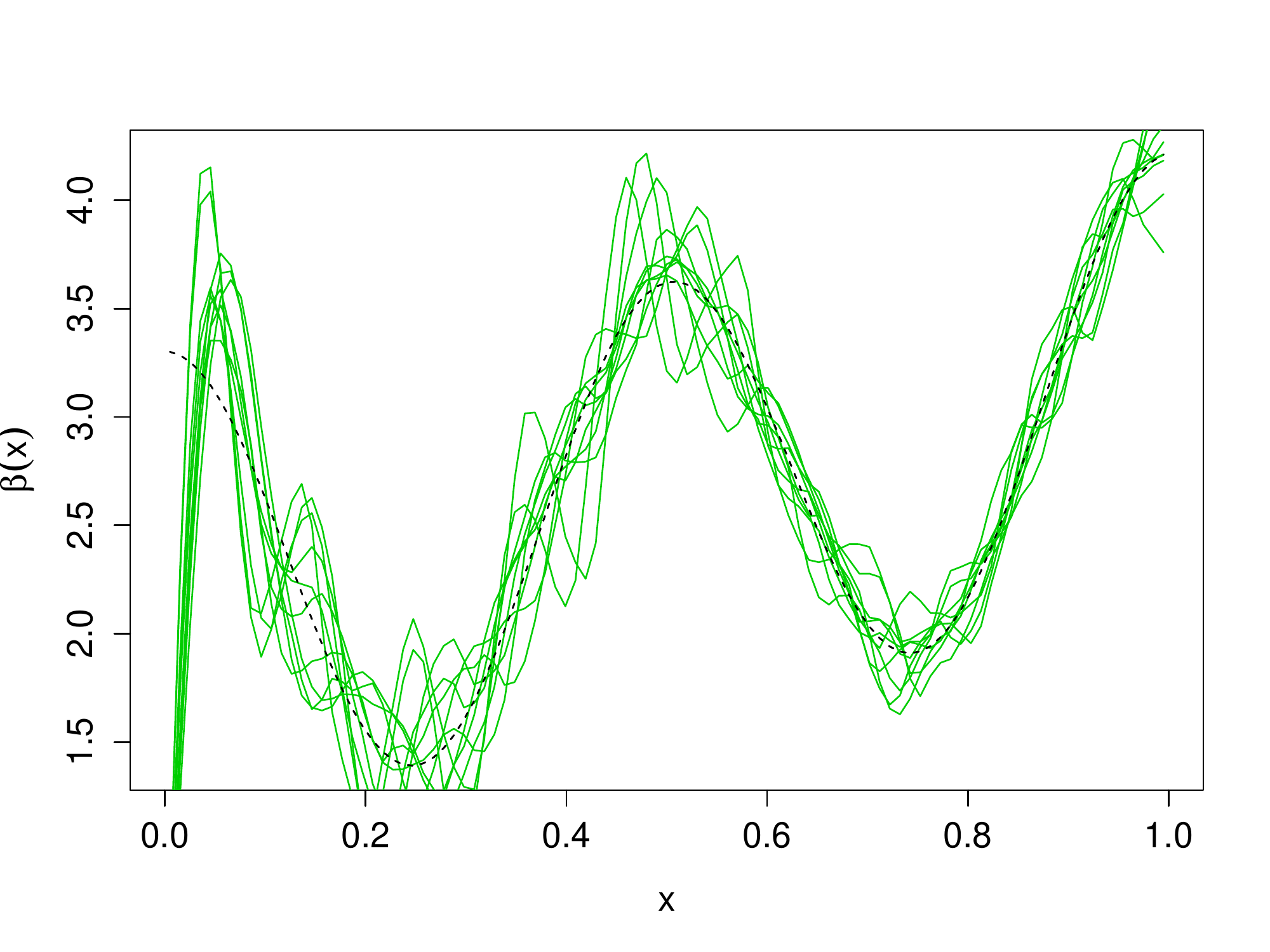}&\includegraphics[width=0.3\textwidth]{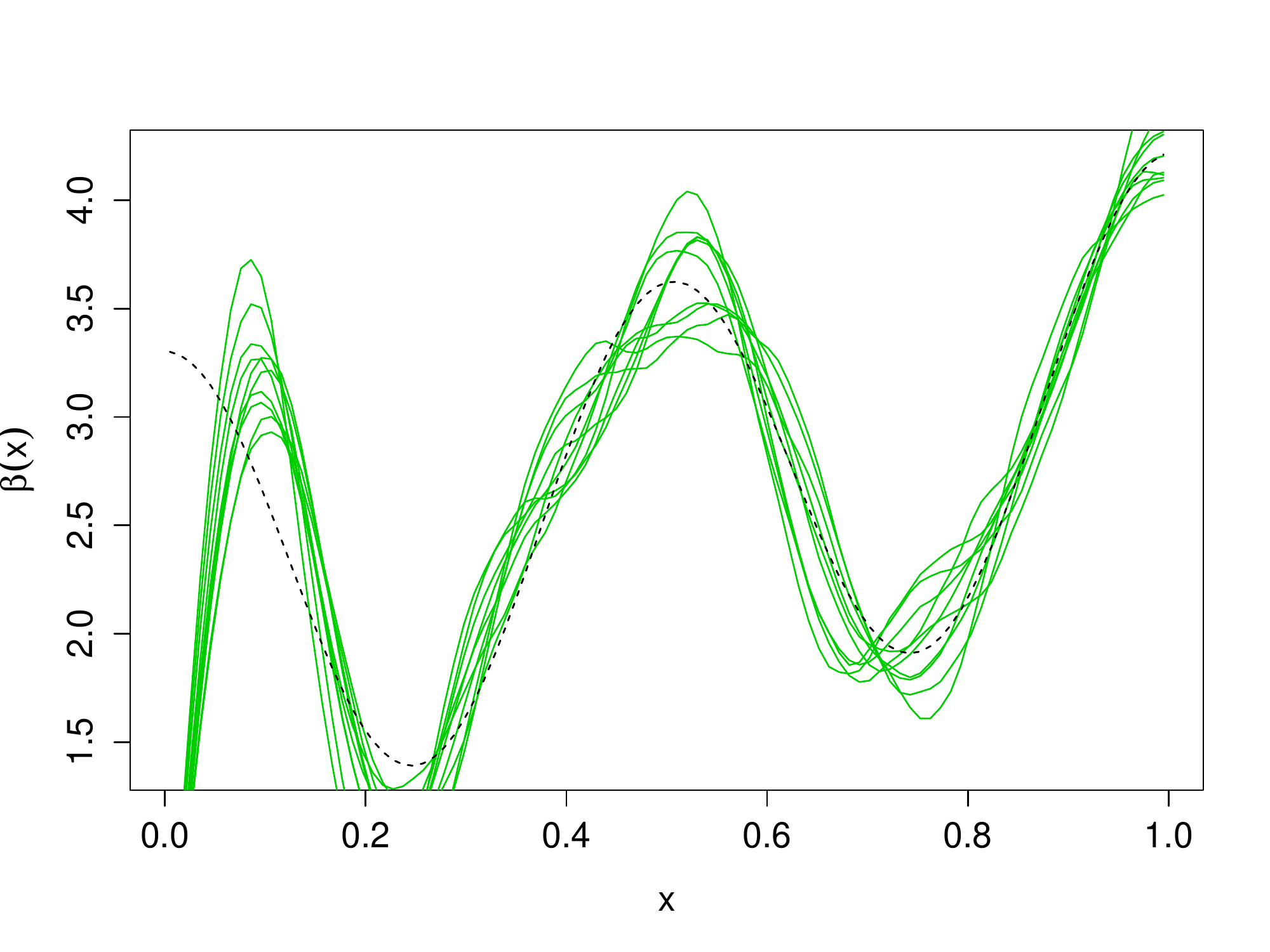}&\includegraphics[width=0.3\textwidth]{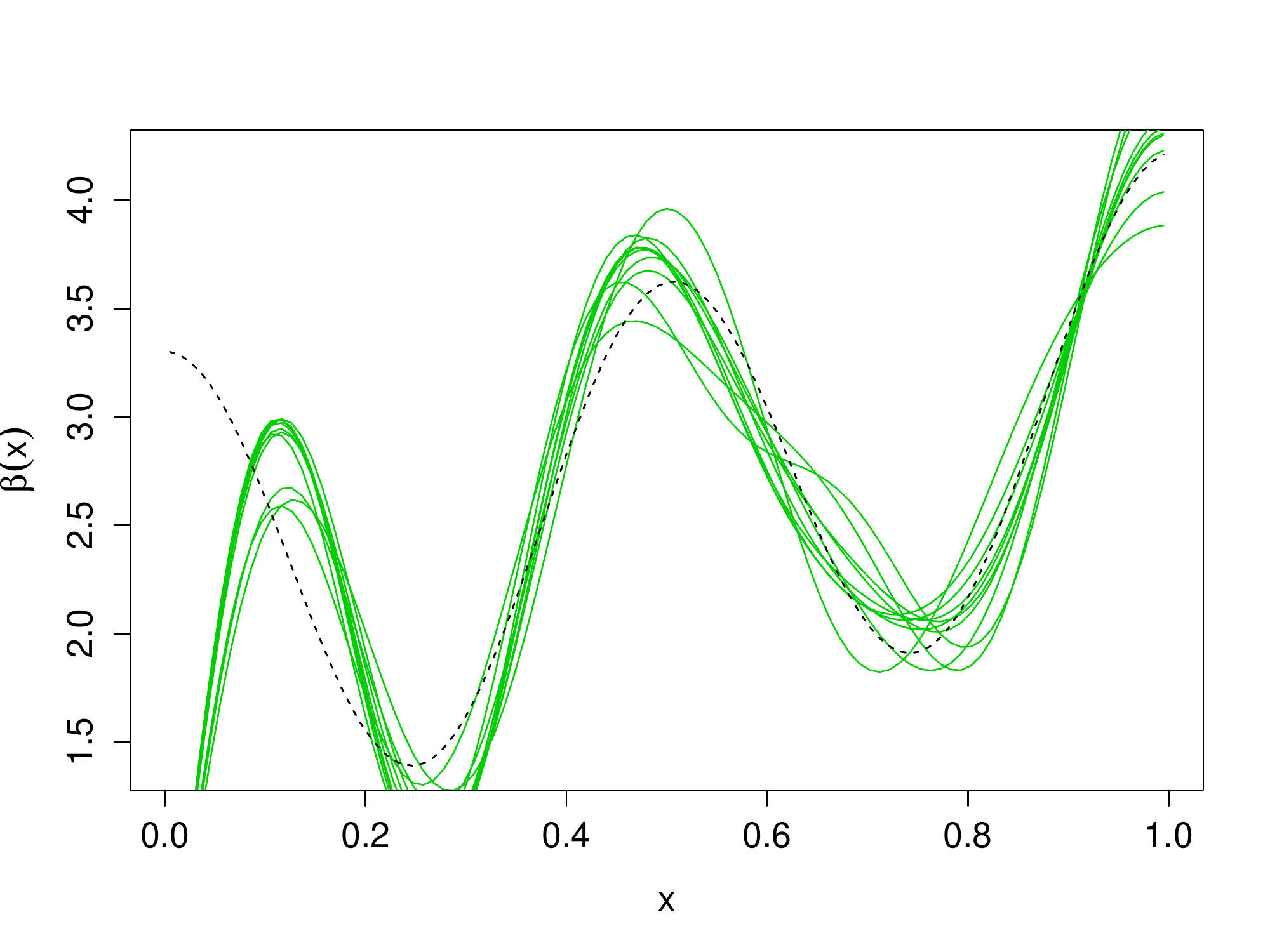}
\end{tabular} 
\caption{\label{estim_beta1}Plot of $\beta_1$ (bold, dashed) and $\tbkv_1$ computed for 10 independent samples of size $n=1000$.}
\end{figure}

\begin{figure}
\begin{tabular}{ccc}
$\lambda_j^{P1}$&$\lambda_j^{P2}$& $\lambda_j^E$\\
\includegraphics[width=0.3\textwidth]{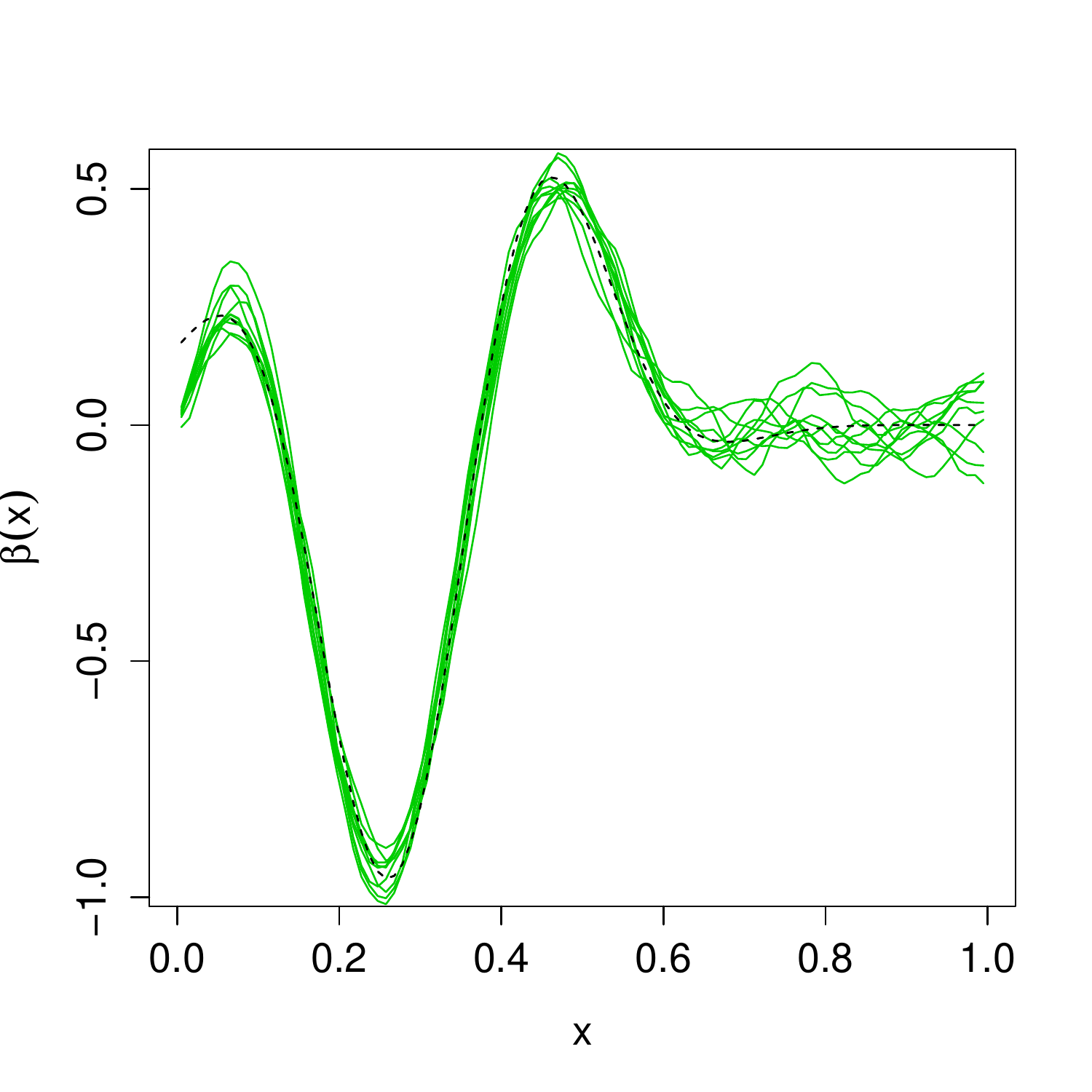}&\includegraphics[width=0.3\textwidth]{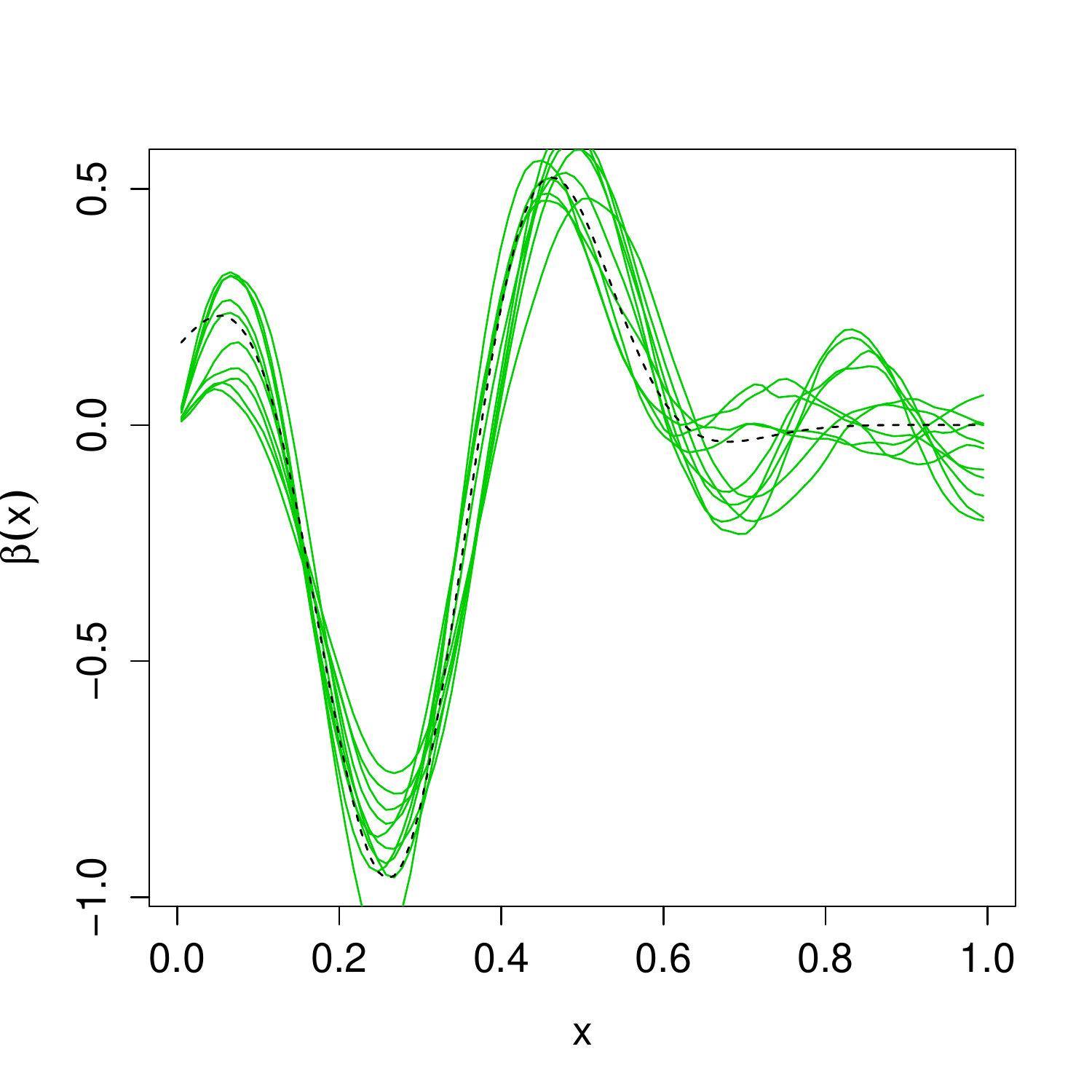}&\includegraphics[width=0.3\textwidth]{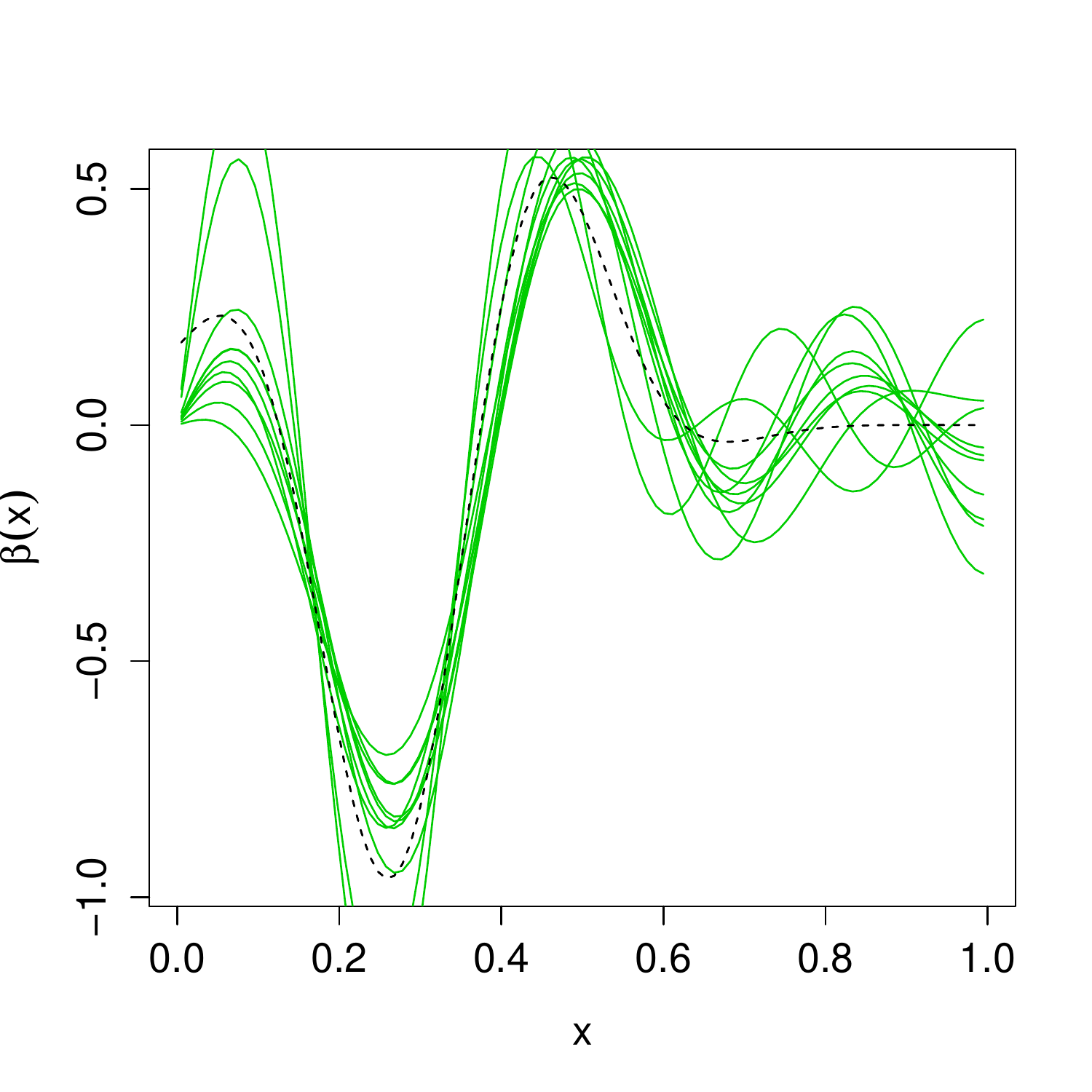}
\end{tabular} 
\caption{\label{estim_beta2}Plot of $\beta_2$ (bold, dashed) and $\tbkv_2$ computed for 10 independent samples of size $n=1000$.}
\end{figure}

As we can see in figures~\ref{estim_beta1} and \ref{estim_beta2}, the regularity of estimators increases when the rate of convergence of the $\lambda_j$'s decreases. This is a specificity of functional PCA: the estimated slope function $\tilde\beta$ is an element of $\text{Im}(\Gamma_n)=\text{span}\{X_1,...,X_n\}$ and thus has the same regularity as the function $X$. It also explains the side effect observed in figures~\ref{estim_beta1} and \ref{estim_beta2} since $X_i(0)=0$ implies that $\tilde\beta(0)=0$.

\begin{table}
\begin{tabular}{|c|c|ccc|ccc|}
\hline
$\lambda_j$&&\multicolumn{3}{c|}{$\beta_1$}&\multicolumn{3}{c|}{$\beta_2$}\\
&&$n=200$&$n=500$&$n=1000$&$n=200$&$n=500$&$n=1000$\\
\hline
$j^{-2}$&kv&\textbf{12.8 $\pm$0.4}&5.9 $\pm$0.2&3.57 $\pm$0.08&\textbf{4.7 $\pm$0.3}&\textbf{1.88 $\pm$0.09}&\textbf{0.89 $\pm$0.05}\\
&uv&\textbf{12.5 $\pm$0.4}&\textbf{5.8 $\pm$0.2}&3.51 $\pm$0.08&\textbf{4.7 $\pm$0.3}&\textbf{1.89 $\pm$0.09}&\textbf{0.89 $\pm$0.05}\\
&GCV&80 $\pm$2&55 $\pm$2&47 $\pm$2&80 $ \pm$2&55 $\pm$2&47 $\pm$2 \\
&CV&\textbf{12.2 $\pm$0.5}&\textbf{5.6 $\pm$0.2}&\textbf{3.34 $\pm$0.09}&5.7 $\pm$0.4&2.2 $\pm$0.2&1.08 $\pm$0.06 \\
\hline
$j^{-3}$&kv&\textbf{6.7 $\pm$0.3}&\textbf{3.3 $\pm$0.1}&\textbf{1.84 $\pm$0.06}&\textbf{5.5 $\pm$0.2}&\textbf{1.8 $\pm$0.1}&\textbf{0.88 $\pm$0.04}\\
&uv&\textbf{6.6 $\pm$0.3}&\textbf{3.2 $\pm$0.1}&\textbf{1.83 $\pm$0.06} &\textbf{5.4 $\pm$0.3}&\textbf{1.8 $\pm$0.1}&\textbf{0.88 $\pm$0.04}\\
&GCV&18.4 $\pm$0.5&12.6 $\pm$0.3&9.3 $\pm$0.2&18.5 $\pm$0.5&12.7 $\pm$0.3&9.5 $\pm$0.2 \\
&CV&7.3 $\pm$0.4&\textbf{3.3 $\pm$0.2}&\textbf{1.78 $\pm$0.07}&\textbf{5.1 $\pm$0.4}&\textbf{2.0 $\pm$0.2}&1.05 $\pm$0.08 \\
\hline
$e^{-j}$&kv&\textbf{4.8 $\pm$0.2}&\textbf{2.05 $\pm$0.08}&\textbf{1.12 $\pm$0.05}&\textbf{5.0 $\pm$0.2}&\textbf{1.9 $\pm$0.1}&\textbf{0.78 $\pm$0.05}\\
&uv&\textbf{4.7 $\pm$0.2}&\textbf{2.03 $\pm$0.08}&\textbf{1.11 $\pm$0.05}&\textbf{4.9 $\pm$0.2}&\textbf{1.82 $\pm$0.09}&\textbf{0.77 $\pm$0.05}\\
&GCV&5.8 $\pm$0.3&2.67 $\pm$0.09&1.45 $\pm$0.05&6.0 $\pm$0.3&2.7 $\pm$0.1&1.40 $\pm$0.05 \\
&CV&\textbf{4.8 $\pm$0.3}&\textbf{2.05 $\pm$0.09}&\textbf{1.10 $\pm$0.05}&\textbf{4.6 $\pm$0.3}&\textbf{1.8 $\pm$0.1}& 0.87 $\pm$0.05\\
\hline
\end{tabular}
\caption{{\label{table}}Mean prediction error ($\times 10^{-4}$) and approximated 95$\%$ confidence interval (calculated from 500 independent samples of size $n=1000$). kv: dimension selected by minimization of~(\ref{critvc}), uv: dimension selected by~(\ref{critvi}).}
\end{table}

The results of Table~\ref{table} indicate that the substitution of the term $\sigma^2$ by the estimator $\hat\sigma_m^2$ in case of unknown variance does not have a significant effect on the quality of estimation. In fact the Monte Carlo study also revealed that the dimension selected by minimization of~(\ref{critvc}) and~(\ref{critvi}) is the same in 70\% to 99\% of cases (percentage depending on the sample size $n$, the decreasing rate of the $\lambda_j$'s and the function $\beta$).

\begin{figure}
\begin{center}
\begin{tabular}{cc}
Estimation of $\beta_1$&\\
\includegraphics[width=0.45\textwidth]{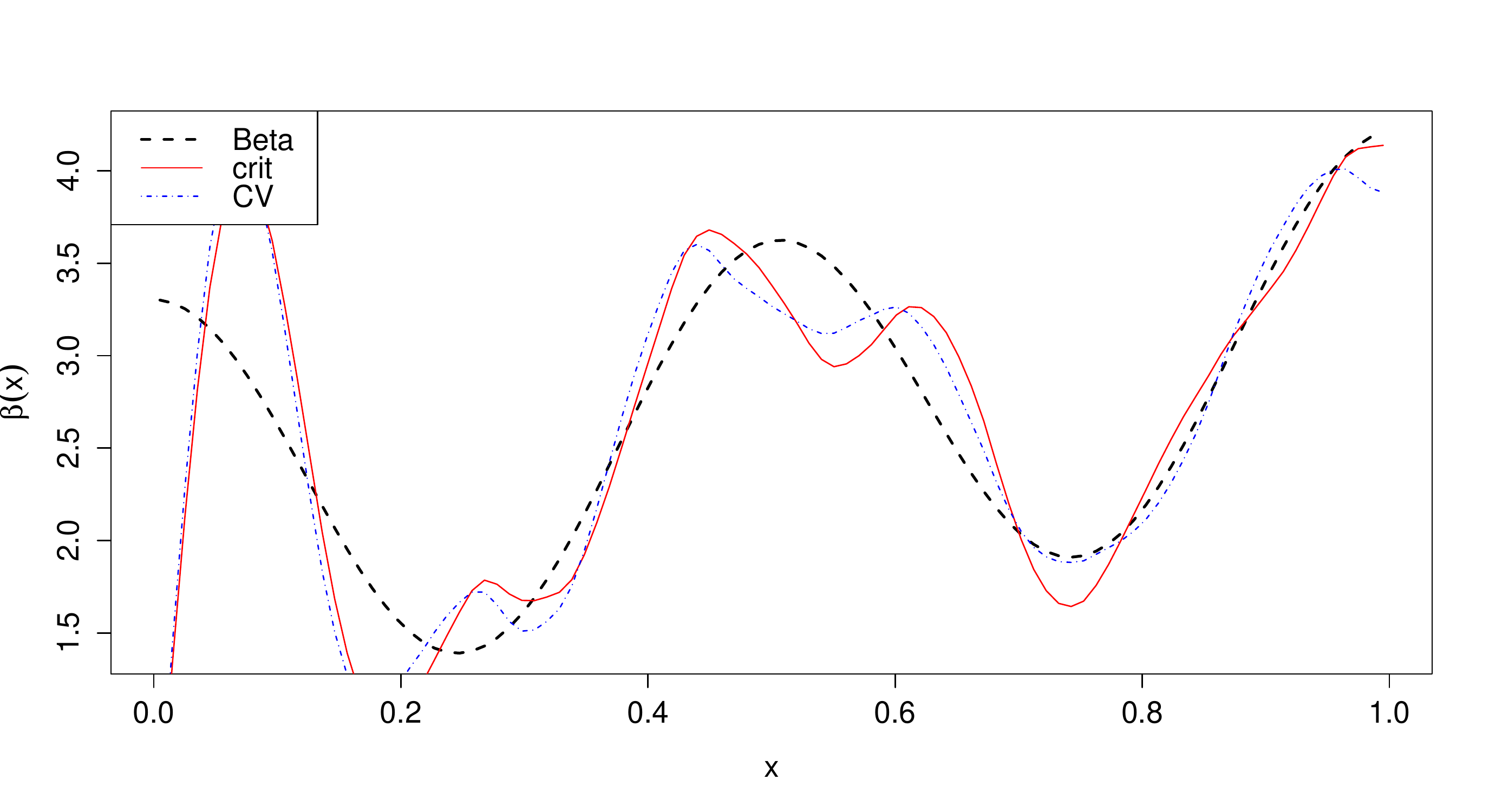}&\includegraphics[width=0.45\textwidth]{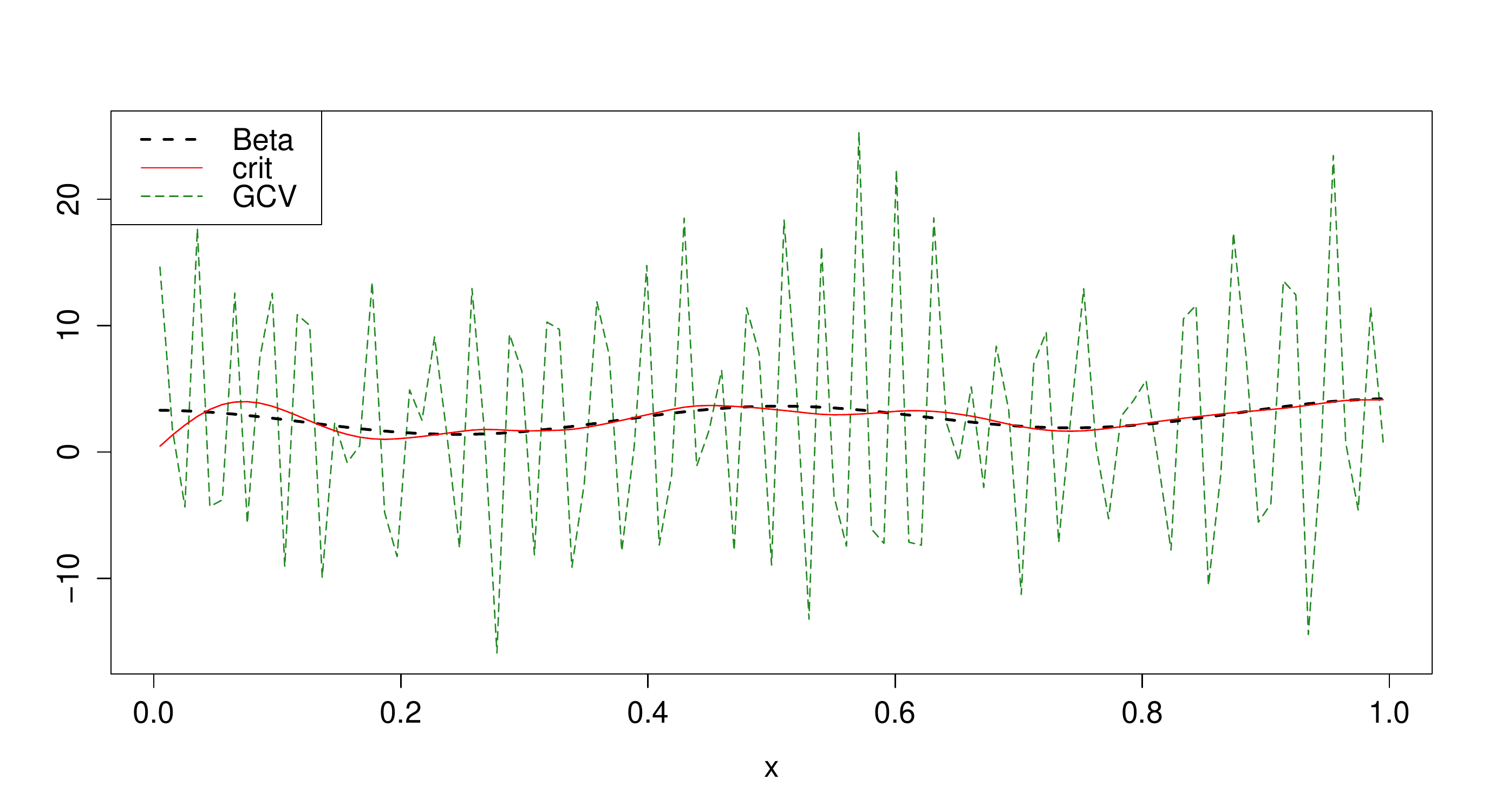}\\
Estimation of $\beta_2$&\\
\includegraphics[width=0.45\textwidth]{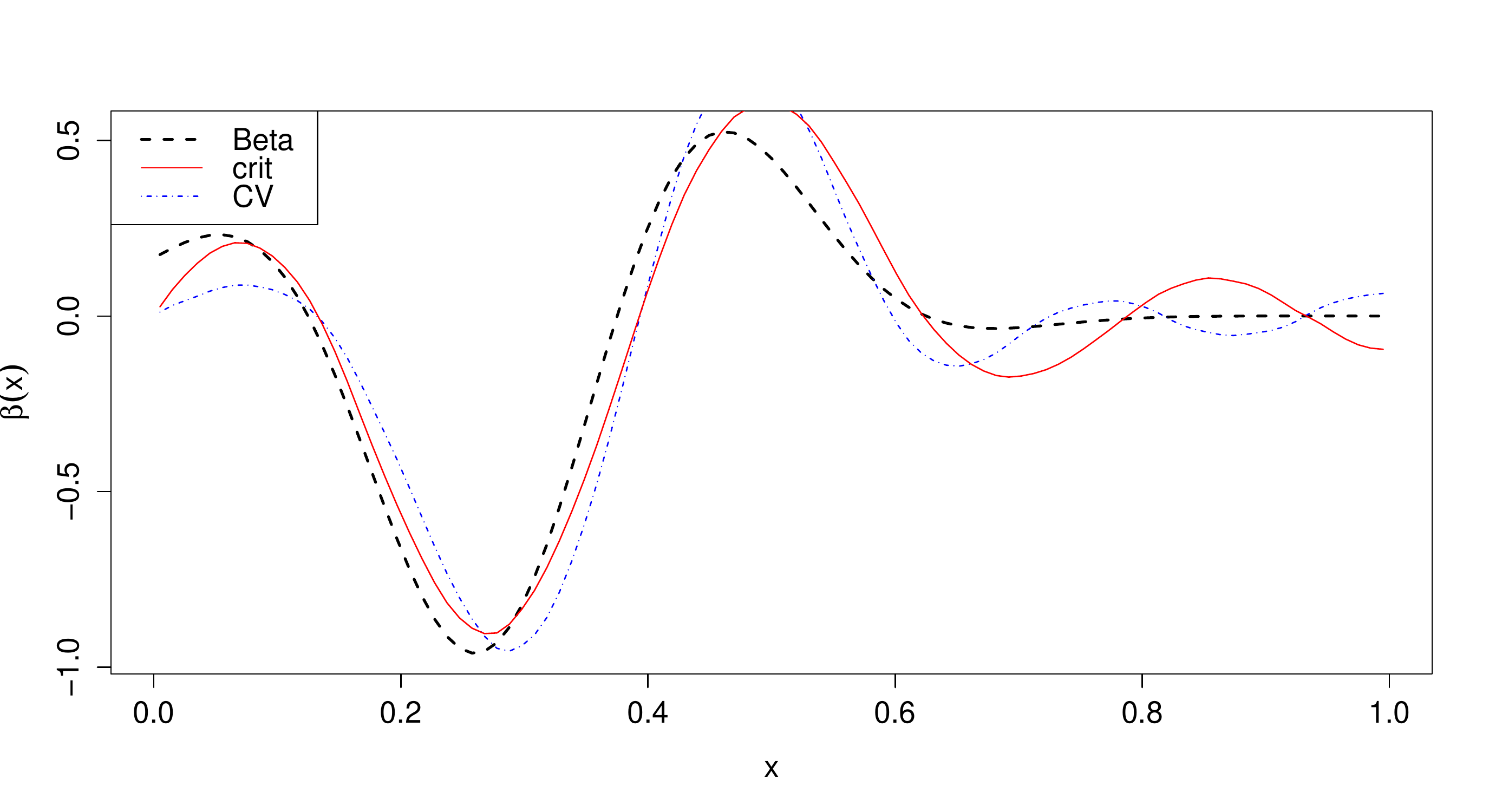}&\includegraphics[width=0.45\textwidth]{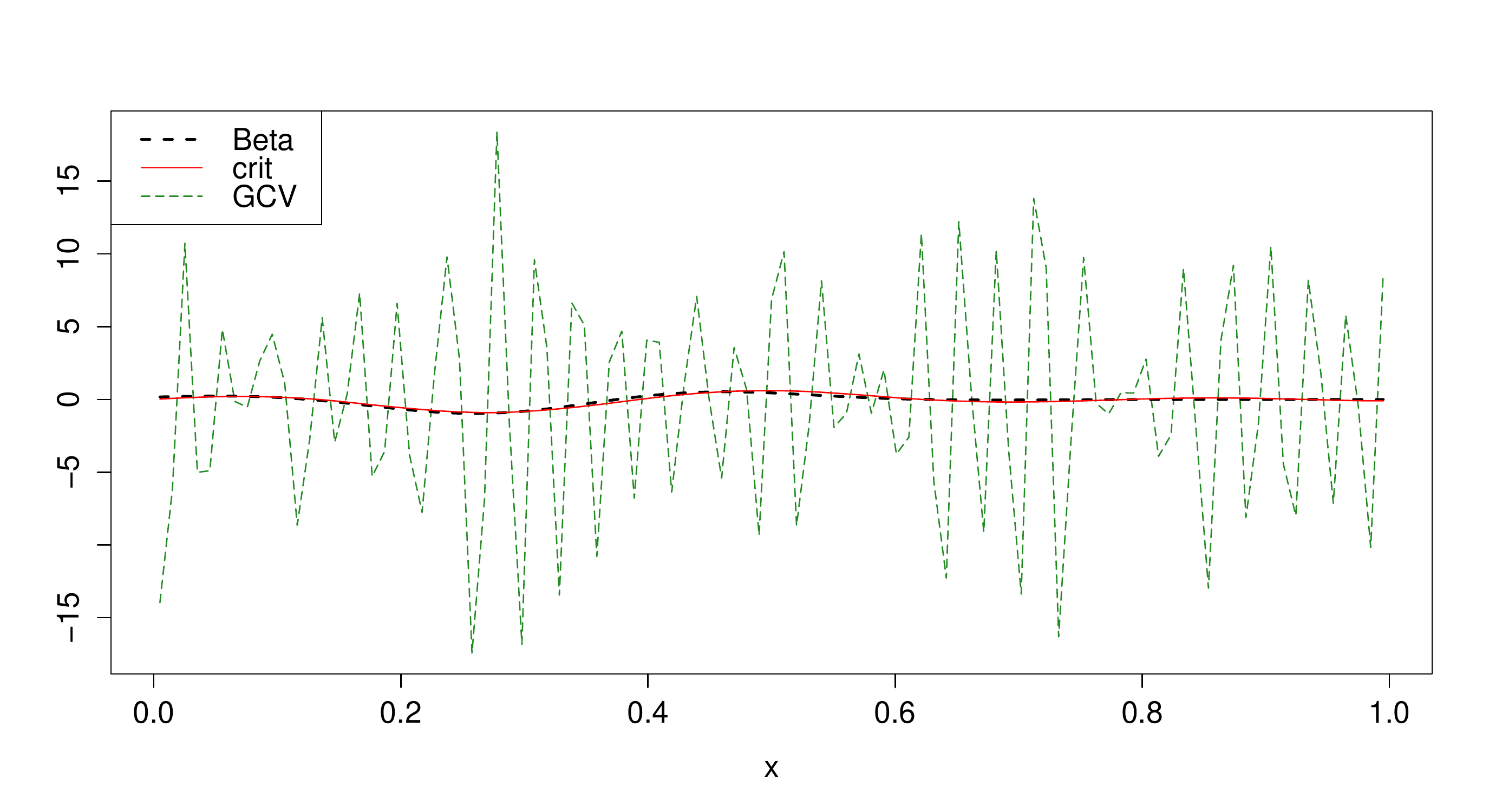}
\end{tabular}
\end{center}

\caption{\label{fig_comp_VC}Left: comparison of estimators $\hat\beta_m$ when $m$ is selected by minimization of the penalized criterion crit defined by (\ref{critvc}) or the CV criterion. Right: comparison with the GCV criterion. $n=2000$, $\lambda_j=j^{-3}$.}
\end{figure}

Moreover, according to Figure~\ref{fig_comp_VC} and Table~\ref{table}, performances of our estimators seem to be quite similar to the functional PCR estimator with dimension selected by minimization of the CV criterion. Conversely, the GCV criterion  selects systematically the highest dimensional model which leads to poor performances. 

\section{Proofs}
\label{proofs}
\subsection{Proof of Lemma~\ref{lem:cont_nu_n} }
\label{subsec:control_nun}

\begin{proof}[Proof of Lemma~\ref{lem:cont_nu_n}]
First denote by $\bar f_\mathbf{X}:=\left(<f,X_1>,...,<f,X_n>\right)'$ and $\bar{\varepsilon}:=\left(\varepsilon_1,...,\varepsilon_n\right)'$. Remark that $\nu_n(f)=\bar f_\mathbf{X}'\bar\varepsilon/n$ and that, by usual properties of orthogonal projectors
\[\sup_{\substack{f\in \hat S_{m}\\\|f\|_{\Gamma_n}=1}}\nu_n(f)=\sup_{\substack{\alpha\in \hat s_{m}\\\alpha'\alpha=n}}\frac{\alpha'\bar\varepsilon}{n}=\frac{1}{\sqrt n}\sup_{\substack{\alpha\in \hat s_{m}\\\alpha'\alpha=1}}\alpha'\bar\varepsilon=\frac{1}{\sqrt n}(\Pi_{\hat s_m}\left(\bar\varepsilon)'\Pi_{\hat s_m}\bar\varepsilon\right)^{1/2}=\frac{1}{\sqrt n}\left(\bar\varepsilon'\Pi_{\hat s_m}\bar\varepsilon\right)^{1/2},\]
where $\hat s_m$ denotes the subspace of $\R^n$ defined by 
\[\hat s_m:=\left\{\alpha\in\R^n,\ \exists f\in\hat S_m,\ \alpha=\bar f_\mathbf{X}\right\}\]
and $\Pi_{\hat s_m}$ is the orthogonal projector onto $\hat s_m$.

Then, by Assumption~\textbf{H1}, applying Corollary 5.1 of Baraud~(2000)~\cite{baraud_model_2000} with $\tilde{A}=\Pi_{\hat s_m}$ we obtain, for all $x>0$,
\[\Pb_\mathbf{X}\left( n\sup_{\substack{f\in \hat S_{m}\\\|f\|_{\Gamma_n}=1}}\nu_n^2(f)\geq m\sigma^2+2\sigma^2\sqrt{mx}+\sigma^2x\right)\leq C(p)\sigma^p\tau_p\frac{m}{x^{p/2}},\]
where $\Pb_\mathbf{X}$ stands for the probability given $\mathbf{X}$.
Then for all $\delta>0$ remark that $\sqrt{mx}\leq \delta m+\delta^{-1}x$ we obtain
\[\Pb_\mathbf{X}\left(\sup_{\substack{f\in \hat S_{m}\\\|f\|_{\Gamma_n}=1}}\nu_n^2(f)\geq (1+\delta)\frac{m\sigma^2}{n}+(1+\delta^{-1})\frac{\sigma^2x}{n}\right)\leq C(p)\frac{m}{x^{p/2}}.\]
Set 
\[Q_{m\vee m'}:=\left(\sup_{\substack{f\in \hat S_{m\vee m'}\\\|f\|_{\Gamma_n}=1}}\nu_n^2(f)-p(m,m')\right)_+.\]
We have, for all $m,m'\in\Mnr$
\begin{eqnarray*}
\E_\mathbf{X}\left[Q_{m\vee m'}\right]&=&\int_0^{+\infty}\Pb_\mathbf{X}\left(Q_{m\vee m'}\geq  t\right)dt\\
&\leq& C(p)\frac{\sigma^{p}}{n^{p/2}}\int_0^{+\infty}\frac{dt}{\left(t+\frac{\sigma^2m\vee m'}{n}(1+\delta)\right)^{p/2}}\leq C'(p,\delta)\frac{\sigma^2}{n}(m\vee m')^{1-p/2}.
\end{eqnarray*}
As $p>4$, $(m\vee m')^{1-p/2}\leq 1$ and we obtain the expected result.
\end{proof}

\subsection{Upper-bound of the risk on $\Delta_n^\complement$}

We first bound the probability of $\Delta_n^\complement$:
\begin{lemma}\label{deltanb}
Under assumptions~\textbf{H2}, \textbf{H3} and \textbf{H4} and if the decreasing rate of $(\lambda_j)_{j\geq 1}$ is given by \textbf{(P)} or \textbf{(E)}, the set $\Delta_n$ defined by Equation~(\ref{deltan}) verifies
\[\Pb(\Delta_n^\complement)\leq C/n^6,\]
with $C>0$ independent of $n$.
\end{lemma}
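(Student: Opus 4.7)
The plan is to rewrite the event $\Delta_n$ as an operator-norm inequality on a random $\hat N_n\times\hat N_n$ matrix, and to establish it on a high-probability event built from Bernstein-type concentration of $\Gamma_n$ in the deterministic basis $(\psi_j)$ together with Davis--Kahan perturbation bounds on the empirical eigenvectors. Writing $f=\sum_{j=1}^{\hat N_n}a_j\hat\psi_j\in\hat S_{\hat N_n}$ and substituting $b_j=a_j\sqrt{\hat\lambda_j}$, the event $\Delta_n$ is equivalent to the operator-norm bound $\|\hat D^{-1/2}\hat\Pi_n\Gamma\hat\Pi_n\hat D^{-1/2}\|_{\mathrm{op}}\leq\rho_0$, where $\hat D=\mathrm{diag}(\hat\lambda_1,\ldots,\hat\lambda_{\hat N_n})$ acts on $\hat S_{\hat N_n}$.

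I would introduce a good event $\Omega_n$ gathering: (a) entrywise concentration $|\langle\Gamma_n\psi_j,\psi_k\rangle-\lambda_j\delta_{jk}|\lesssim \sqrt{\lambda_j\lambda_k\log n/n}$ uniformly in $j,k\leq M_n:=\lfloor 20\sqrt{n/\ln^3 n}\rfloor$, obtained via Bernstein's inequality for which \textbf{H2} supplies the exact moment bound and \textbf{H3} furnishes the decorrelation needed off-diagonal; (b) a Davis--Kahan bound $\|\hat\psi_j-\psi_j\|\lesssim\|\Gamma-\Gamma_n\|_{\mathrm{op}}/(\lambda_j-\lambda_{j+1})$, usable because \textbf{H4} prevents the spacing $\lambda_j-\lambda_{j+1}$ from being too small relative to $\lambda_j/j$; (c) the comparison $\hat N_n\leq cN_n$, which follows from eigenvalue concentration applied at the threshold $\mathfrak s_n$. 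On $\Omega_n$, expanding $\hat\psi_j=\psi_j+r_j$ in $(\psi_\ell)$ and computing $\langle\Gamma\hat\psi_j,\hat\psi_k\rangle$, I would verify by a Gershgorin argument (or a Cauchy--Schwarz weighted by $\sqrt{\hat\lambda_j}$) that the rescaled matrix has diagonal entries close to $1$ and off-diagonal entries small enough to produce $\|\cdot\|_{\mathrm{op}}\leq\rho_0$. A union bound over $j,k\leq M_n$ with deviations of order $\sqrt{C\log n/n}$ for $C$ large then produces $\Pb(\Omega_n^\complement)\leq C/n^6$, exploiting the polynomial $(\mathbf P)$ or exponential $(\mathbf E)$ decay of $(\lambda_j)$ to sum the remainders.

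The main obstacle is the analytic implication $\Omega_n\subseteq\Delta_n$. The threshold $\hat\lambda_{\hat N_n}\geq\mathfrak s_n\asymp n^{-2}$ rules out any crude perturbation estimate of the form $\|\Gamma-\Gamma_n\|_{\mathrm{op}}/\mathfrak s_n$, which would blow up like $n^{3/2}$ and overwhelm the margin $\rho_0-1$. The argument must instead exploit that the rescaling $\hat D^{-1/2}$ acts on the very eigenvectors which, by Davis--Kahan combined with \textbf{H4}, stay close to the true $\psi_j$'s; in that regime the entrywise Bernstein concentration is sharp enough to control both the diagonal correction and the off-diagonal terms uniformly in $j,k\leq\hat N_n$. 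Threading these two perturbations together, and absorbing the logarithmic factors coming from the spacing assumption \textbf{H4} and the union bound over $M_n^2\leq 400\,n/\ln^3 n$ entries, is the delicate point.
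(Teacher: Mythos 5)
Your reduction of $\Delta_n$ to an operator-norm bound on the rescaled Gram matrix is exactly the paper's first move (there it is the spectral radius of $\hat\Lambda_n^{-1}\Psi_n\hat\Lambda_n^{-1}$ with $\Psi_n=(<\Gamma^{1/2}\hat\psi_j,\Gamma^{1/2}\hat\psi_k>)_{j,k}$, rewritten as $1+\|\Gamma^{1/2}(\Gamma_n^\dag-\Gamma^\dag)\Gamma^{1/2}\|_\infty$), and your step (c) is the paper's Lemma on $\Pb(\hat N_n>N_n)$. But the core of your argument has a genuine gap at step (b). The Davis--Kahan bound $\|\hat\psi_j-\psi_j\|\lesssim\|\Gamma_n-\Gamma\|_\infty/(\lambda_j-\lambda_{j+1})$ is vacuous for most of the range of indices you need: under \textbf{(P)} the gap satisfies $\lambda_j-\lambda_{j+1}\asymp j^{-a-1}$, while $\|\Gamma_n-\Gamma\|_\infty\asymp n^{-1/2}$, so the bound exceeds $1$ as soon as $j\gg n^{1/(2(a+1))}$, whereas you must go up to $\hat N_n\asymp\sqrt{n/\ln^3 n}$ (and the situation is worse under \textbf{(E)}). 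Assumption \textbf{H4} controls the quantity $\mathbf a_k=\sum_{i\neq k}\lambda_i/|\lambda_i-\lambda_k|+\lambda_k/\delta_k\lesssim k\ln k$; it does not make the gaps large relative to $n^{-1/2}$. So the empirical eigenvectors are simply not individually close to the true ones in the regime you need, and the expansion $\hat\psi_j=\psi_j+r_j$ with small $r_j$ on which your Gershgorin computation rests is unavailable. A second, independent problem is the Gershgorin step itself: even granting off-diagonal entries of size $\sqrt{\log n/n}$ after rescaling, the row sums over $k\leq\hat N_n\asymp\sqrt{n}$ are of order $\sqrt{\log n}$, which does not stay below the margin $\rho_0-1$; one needs decay in $|j-k|$ or a genuinely operator-level argument.

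The paper avoids both obstacles by never estimating individual eigenvectors. It represents $\Gamma^{1/2}(\Gamma_n^\dag-\Gamma^\dag)\Gamma^{1/2}$ as a contour integral of $\frac1z R^{1/2}(z)\left[I-\mathcal T(z)\right]^{-1}\mathcal T(z)R^{1/2}(z)$ with $R(z)=(zI-\Gamma)^{-1}$ and $\mathcal T(z)=R^{1/2}(z)(\Gamma_n-\Gamma)R^{1/2}(z)$, and works on the event $\mathcal J_n$ where $\sup_z\|\mathcal T(z)\|_\infty\leq\mathbf a_{N_n}\ln n/\sqrt n$, which is small precisely because $\mathbf a_k\lesssim k\ln k$ under \textbf{H4} and $N_n\leq 20\sqrt{n/\ln^3 n}$. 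This symmetric resolvent weighting is the rigorous form of the ``relative'' perturbation you correctly sense is needed (your entrywise bound $\sqrt{\lambda_j\lambda_k\log n/n}$ is its matrix shadow), but it is applied to the whole operator at once via a Hilbert-valued Bernstein inequality, with the remaining contour integrals estimated directly (and, in the polynomial case, an explicit residue computation for the linear term plus Hilbert--Schmidt bounds). You have identified the right obstacle but not supplied the mechanism that overcomes it; as written, the proof does not go through.
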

\begin{proof}
First remark that:
\[\Pb(\Delta_n^\complement)=\Pb(\Delta_n^\complement\cap \{\hat N_n\leq N_n\})+\Pb(\Delta_n^\complement\cap \{\hat N_n>N_n\}),\]
the second term of this equality is easily bounded by $Cn^{-6}$ by Lemma~\ref{lem:hNn}. It remains to bound the first term. We have
\begin{equation*}
\Delta_n^\complement\cap \{\hat N_n\leq N_n\}=\left\{\inf_{f\in\hat{\mathcal S}_n}\frac{\|f\|_{\Gamma_n}^2}{\|f\|_\Gamma^2}<\rho_0^{-1} \right\}\cap \{\hat N_n\leq N_n\}\subset\left\{\inf_{f\in \hat S_{N_n}}\frac{\|f\|_{\Gamma_n}^2}{\|f\|_\Gamma^2}<\rho_0^{-1} \right\}  .
\end{equation*}

Let $f=\sum_{j=1}^{\hat N_n}\alpha_j\hat\psi_j\in\hat{S}_{N_n}$, we have a.s.
\[\|f\|_{\Gamma_n}^2=\sum_{j=1}^{N_n}\hat\lambda_j\alpha_j^2=|\hat\Lambda_n\alpha|_2^2,\]
where $|\cdot|_2$ is the norm of $\R^{ N_n}$ defined by $|x|_2^2=\displaystyle{\sum_{j=1}^{N_n}x_i^2}$ for all $x=(x_1,...,x_n)'\in\R^{ N_n}$ and $\hat\Lambda_n$ is the diagonal matrix with diagonal entries $\{\sqrt{\hat\lambda_1},...,\sqrt{\hat\lambda_n}\}$.\\
Moreover
\[\|f\|_\Gamma^2=\sum_{j,k=1}^{N_n}\alpha_j\alpha_k<\Gamma^{1/2}\hat\psi_j,\Gamma^{1/2}\hat\psi_k>=\alpha'\Psi_n\alpha,\]
where $\Psi_n$ is the symmetric and positive-definite matrix
\[\Psi_n:=\left(<\Gamma^{1/2}\hat\psi_j,\Gamma^{1/2}\hat \psi_k>\right)_{1\leq j,k\leq \hat N_n}.\]
Then,
\[\frac{\|f\|_{\Gamma_n}^2}{\|f\|_\Gamma^2}=\frac{|\hat\Lambda_n\alpha|_2^2}{\alpha'\Psi_n\alpha}=\frac{|\hat\Lambda_n\alpha|_2^2}{|\Psi_n^{1/2}\alpha|_2^2}=\frac{|\hat\Lambda_{n}\Psi_n^{-1/2}\Psi_n^{1/2}\alpha|_2^2}{|\Psi_n^{1/2}\alpha|_2^2}.\]
Now
\[\inf_{f\in\hat S_{N_n}}\frac{\|f\|_{\Gamma_n}^2}{\|f\|_\Gamma^2}=\inf_{\alpha\in\R^{ N_n}\backslash\{0\}}\frac{\alpha'\Psi_n^{-1/2}\hat\Lambda_n^2\Psi_n^{-1/2}\alpha}{|\alpha|_2^2}=\min\{\lambda, \lambda\text{ eigenvalue of }\Psi_n^{-1/2}\hat\Lambda_n^2\Psi_n^{-1/2}\}.\]

On the set $\mathcal J_n$ defined by Equation~(\ref{Jnrec}) in Section~\ref{sec:perturbation}, by Lemma~\ref{inclu}, for all, $j=1,..., N_n$, $\hat\lambda_j>0$. Hence the matrix $\hat\Lambda_n$ is invertible, therefore
\[\inf_{f\in\hat{\mathcal S}_n}\frac{\|f\|_{\Gamma_n}^2}{\|f\|_\Gamma^2}=\rho\left((\Psi_n^{-1/2}\hat\Lambda_n^2\Psi_n^{-1/2})^{-1}\right)^{-1}=\rho(\hat\Lambda_n^{-1}\Psi_n\hat\Lambda_n^{-1})^{-1},\]
where, for a matrix $A$, $\rho(A)=\max\{|\lambda|, \lambda\text{ is a complex eigenvalue of }A\}$ denotes the spectral radius of $A$. We have then
\begin{equation}\label{Deltan1}
\Pb\left({\Delta}_n^\complement\cap\left\{\widehat{N_n}<N_n\right\}\right)\leq\Pb(\mathcal J_n\cap\{\rho(\hat\Lambda_n^{-1}\Psi_n\hat\Lambda_n^{-1})>\rho_0\})+\Pb({\mathcal J}_n^\complement).
\end{equation}

By Lemma~\ref{pbJn} in Section~\ref{sec:perturbation}, $\Pb({\mathcal{J}_n}^\complement)\leq C/n^6$, with $C$ depending only on $\Gamma$ and $b$. Thus it remains to control the spectral radius of $\hat\Lambda_n^{-1}\Psi_n\hat\Lambda_n^{-1}$.

We define a linear (random) application $O$ from $\R^{N_n}$ to $\Ld$ by:
\[O : \alpha=(\alpha_1,...,\alpha_{N_n})'\mapsto \sum_{j=1}^{ N_n}\alpha_j\hat\psi_j.\]
We denote by $O^*$ the adjoint of $O$, which is the linear map from $\Ld$ to $\R^{ N_n}$ defined by:
\[O^* : f\mapsto (<f,\hat\psi_j>_{1\leq j\leq  N_n}).\]
 We can check that $OO^*=\hat\Pi_{ N_n}$ and $\Psi_n$ is the matrix of the linear map $O^*\Gamma O$ in the standard basis of $\R^{ N_n}$.

It is known that the spectral radius of an operator is equal to the spectral radius of its adjoint, then,
\begin{equation}\label{spectral_transf}
\rho(\hat{\Lambda}_n^{-1}\Psi_n\hat{\Lambda}_n^{-1})=\rho(\hat{\mathcal{L}}_n^{-1}O^*\Gamma O\hat{\mathcal{L}}_n^{-1})=\rho(\Gamma^{1/2}O\hat{\mathcal{L}}_n^{-1}\hat{\mathcal{L}}_n^{-1}O^*\Gamma^{1/2}),
\end{equation}
where $\hat{\mathcal L}_n$ denotes the linear endormorphism of $\R^{N_n}$ whose matrix in the standard basis is $\hat\Lambda_n$. 
 Denote by $\Pi_{ N_n}$ the orthogonal projector onto $S_{ N_n}=\text{span}\{\psi_1,...,\psi_{ N_n}\}$. Moreover, let $\Gamma^\dag$ (resp. $\Gamma_n^\dag$) the pseudo-inverse of operator $\Gamma$ (resp.  $\Gamma_n$) on $\hat S_{ N_n}$ (resp. $\hat{\mathcal S}_n$), defined by:
 \begin{equation}\label{gammadag}\Gamma^\dag f:=\sum_{j=1}^{ N_n}\frac{<f,\psi_j>}{\lambda_j}\psi_j\text{ and }\Gamma_n^\dag f:=\sum_{j=1}^{ N_n}\frac{<f,\hat\psi_j>}{\hat\lambda_j}\hat\psi_j\mathbf{1}_{\{\hat\lambda_j>0\}},\end{equation}
 we have $\Gamma^{1/2}\Gamma^\dag\Gamma^{1/2}=\Pi_{N_n}$ and $O\hat{\mathcal L}_n^{-2}O^*=\Gamma_n^\dag$.
Then, 	\[\Gamma^{1/2}O\hat{\mathcal{L}}_n^{-1}\hat{\mathcal{L}}_n^{-1}O^*\Gamma^{1/2}=\Gamma^{1/2}\Gamma_n^\dag\Gamma^{1/2}=\Gamma^{1/2}(\Gamma^\dag+\Gamma_n^\dag-\Gamma^\dag)\Gamma^{1/2}=\Pi_{\hat N_n}+\Gamma^{1/2}(\Gamma_n^\dag-\Gamma^\dag)\Gamma^{1/2},
\]
and by Equation~(\ref{spectral_transf})
	\[\rho(\hat{\Lambda}_n^{-1}\Psi_n\hat{\Lambda}_n^{-1})=\|\Pi_{\hat N_n}+\Gamma^{1/2}(\Gamma_n^\dag-\Gamma^\dag)\Gamma^{1/2}\|_\infty\leq 1+\|\Gamma^{1/2}(\Gamma_n^\dag-\Gamma^\dag)\Gamma^{1/2}\|_\infty,
\]
where $\|\cdot\|_\infty$ denotes the usual operator norm.\\
Now
	\[\Pb(\mathcal J_n\cap\{\rho(\hat\Lambda_n^{-1}\Psi_n\hat\Lambda_n^{-1})>\rho_0\})\leq \Pb\left(\mathcal J_n\cap\left\{\|\Gamma^{1/2}(\Gamma_n^\dag-\Gamma^\dag)\Gamma^{1/2}\|_\infty>\rho_0-1\right\}\right).
\]
Thus, the results of Lemma~\ref{deltanb_tech} -- whose technical proof is given in Section~\ref{subsec:tech_DeltanC} -- in Equation~(\ref{Deltan1}) allows us to bound $\Pb\left({\Delta}_n^\complement\cap\left\{\widehat{N_n}<N_n\right\}\right)$. Then the proof is finished by Lemma~\ref{lem:hNn}.
\end{proof}

\begin{lemma}\label{deltanb_tech}Suppose that assumptions~\textbf{H2}, \textbf{H3} and \textbf{H4} are fulfilled and that the decreasing rate of $(\lambda_j)_{j\geq 1}$ is given by \textbf{(P)} or \textbf{(E)}, then
\[\Pb\left(\mathcal J_n\cap\left\{\|\Gamma^{1/2}(\Gamma_n^\dag-\Gamma^\dag)\Gamma^{1/2}\|_\infty>\rho_0-1\right\}\right)\leq C n^{-6},\]
with $C>0$ independent of $n$.
\end{lemma}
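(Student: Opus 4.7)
The plan is a perturbation-plus-concentration argument carried out on the favorable event $\mathcal J_n$. First I would split $\Gamma_n^\dag - \Gamma^\dag$ into a ``spectral'' part proportional to $\Gamma-\Gamma_n$ and two ``projector'' parts proportional to $\hat\Pi_{N_n}-\Pi_{N_n}$ via the identity
$$\Gamma_n^\dag - \Gamma^\dag \;=\; \Gamma_n^\dag(\Gamma-\Gamma_n)\Gamma^\dag \;+\; (\hat\Pi_{N_n}-\Pi_{N_n})\Gamma^\dag \;+\; \Gamma_n^\dag(\hat\Pi_{N_n}-\Pi_{N_n})\Pi_{N_n}^\perp,$$
obtained by using $\Gamma^\dag\Gamma=\Pi_{N_n}$, $\Gamma_n^\dag\Gamma_n=\hat\Pi_{N_n}$, $\Gamma^\dag=\Pi_{N_n}\Gamma^\dag$ and $\Gamma_n^\dag=\hat\Pi_{N_n}\Gamma_n^\dag$. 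Sandwiching between $\Gamma^{1/2}$ and applying $\Gamma^\dag\Gamma^{1/2}=\Pi_{N_n}\Gamma^{-1/2}$ on $S_{N_n}$ decomposes $T_n:=\Gamma^{1/2}(\Gamma_n^\dag-\Gamma^\dag)\Gamma^{1/2}$ as $A_1+A_2+A_3$, where $A_1$ involves the whitened empirical fluctuation
$$U_n \;:=\; I_{N_n} \;-\; \frac{1}{n}\sum_{i=1}^n Z_i\otimes Z_i, \qquad Z_i := \Gamma^{-1/2}\Pi_{N_n}X_i,$$
and $A_2,A_3$ involve the spectral-projector gap $\hat\Pi_{N_n}-\Pi_{N_n}$.

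Second, I would bound each $A_i$ deterministically on $\mathcal J_n$. The spectral piece satisfies $\|A_1\|_\infty\leq\|\Gamma^{1/2}\Gamma_n^\dag\Gamma^{1/2}\|_\infty\cdot\|U_n\|_\infty$, and on $\mathcal J_n$ the prefactor is close to $1$ by the very construction of $\mathcal J_n$ in Section~\ref{sec:perturbation}. The two projector pieces reduce, via a Davis--Kahan-type bound, to $\|\hat\Pi_{N_n}-\Pi_{N_n}\|_\infty\lesssim(\lambda_{N_n}-\lambda_{N_n+1})^{-1}\|\Gamma_n-\Gamma\|_\infty$, where the spectral gap in the denominator is controlled under $\mathbf H4$ and $(\mathbf P)$ or $(\mathbf E)$, being polynomial (respectively log-polynomial) in $N_n$.

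Third, I would apply a matrix Bernstein inequality to $\|U_n\|_\infty$ and $\|\Gamma_n-\Gamma\|_\infty$. Under $\mathbf H2$ the coordinates $\langle X,\psi_j\rangle/\sqrt{\lambda_j}$ have uniformly bounded Laplace transforms, and under $\mathbf H3$ they are mutually independent, so the $Z_i$ are i.i.d.\ sub-exponential vectors in $\R^{N_n}$ with covariance $I_{N_n}$. The inequality yields $\Pb(\|U_n\|_\infty>x)\leq 2N_n\exp(-cn\min(x,x^2))$, together with an analogous tail for $\|\Gamma_n-\Gamma\|_\infty$. Taking $x$ of constant order for $U_n$ and of small polynomial order in $N_n^{-1}$ for the projector term, and using the cutoff $N_n\leq 20\sqrt{n/\ln^3 n}$ to absorb the $\ln N_n$ prefactor, each tail event has probability at most $Cn^{-6}$; a union bound then gives $\|T_n\|_\infty\leq\rho_0-1$ off an event of probability at most $Cn^{-6}$.

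The main obstacle is the concentration step for $U_n$: the rescaling by $\Gamma^{-1/2}$ boosts the low-variance, high-frequency directions of $X$ to unit variance, so only the sub-exponential moments granted by $\mathbf H2$ make the tail summable at the $n^{-6}$ level with a deviation of constant order, and the prescribed ceiling $N_n\leq 20\sqrt{n/\ln^3 n}$ is essentially tight for the Bernstein exponent to simultaneously absorb $\ln N_n$ and produce the required $6\ln n$ factor.
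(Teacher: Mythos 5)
Your algebraic identity for $\Gamma_n^\dag-\Gamma^\dag$ is correct, and the whitened fluctuation $U_n$ you isolate in $A_1$ does correspond to a genuine object in the paper's proof (the residue computation of the linear term produces exactly the off-diagonal entries $\frac1n\sum_i\xi_p^{(i)}\xi_q^{(i)}$ of $I-\frac1n\sum Z_i\otimes Z_i$, handled by entrywise Bernstein plus a union bound over $N_n^2$ pairs, which is where the ceiling $N_n\leq 20\sqrt{n/\ln^3 n}$ earns the $n^{-6}$). But the plan breaks in two places. First, your bound $\|A_1\|_\infty\leq\|\Gamma^{1/2}\Gamma_n^\dag\Gamma^{1/2}\|_\infty\cdot\|U_n\|_\infty$ with the prefactor ``close to $1$ on $\mathcal J_n$ by the very construction of $\mathcal J_n$'' is circular: $\Gamma^{1/2}\Gamma_n^\dag\Gamma^{1/2}=\Pi_{N_n}+\Gamma^{1/2}(\Gamma_n^\dag-\Gamma^\dag)\Gamma^{1/2}$ is precisely the operator whose norm the lemma is trying to control (this is how it enters the proof of Lemma~\ref{deltanb}); the event $\mathcal J_n$ only constrains $\sup_\zeta\|\mathcal T(\zeta)\|_\infty$, and converting that into a bound on $\Gamma^{1/2}\Gamma_n^\dag\Gamma^{1/2}$ is the content of the statement, not a consequence of the definition.

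Second, and more fatally, the Davis--Kahan treatment of $A_2$ and $A_3$ cannot close quantitatively. Writing $A_2=\Gamma^{1/2}(\hat\Pi_{N_n}-\Pi_{N_n})\Gamma^{-1/2}\Pi_{N_n}$, the right factor has norm $\lambda_{N_n}^{-1/2}\asymp N_n^{a/2}$ in case $(\mathbf P)$, while the unweighted gap bound gives $\|\hat\Pi_{N_n}-\Pi_{N_n}\|_\infty\lesssim(\lambda_{N_n}-\lambda_{N_n+1})^{-1}\|\Gamma_n-\Gamma\|_\infty\asymp N_n^{a+1}n^{-1/2}$, which already diverges for $N_n\asymp\sqrt{n/\ln^3 n}$ and any $a>0$; multiplying the two makes matters worse. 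The whole difficulty of this lemma is that the $\Gamma^{\pm1/2}$ weights must be kept \emph{inside} the perturbation expansion so that the dangerous factors $\lambda_k/|\lambda_j-\lambda_k|$ are summed into the quantities $\mathbf a_k\lesssim k\ln k$ (Lemma~\ref{trick1}) and so that the within-block part of the linear term comes out gap-free (the residue calculation shows the diagonal and the $\Gamma$ contribution cancel exactly). The paper achieves this by expanding $(I-\mathcal T(\zeta))^{-1}\mathcal T(\zeta)=\mathcal T(\zeta)+(I-\mathcal T(\zeta))^{-1}\mathcal T^2(\zeta)$ in the contour integral of Lemma~\ref{diff_proj}, bounding the quadratic remainder by $\sup_\zeta\|\mathcal T(\zeta)\|_\infty^2$ times a deterministic $O(\ln N_n)$ integral, and concentrating only weighted quantities ($\mathcal T(\zeta)$, the scalars $\xi_p\xi_q$, and a Hilbert-valued cross term via Bosq's inequality) --- never the raw $\|\Gamma_n-\Gamma\|_\infty$ divided by a spectral gap. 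To repair your route you would need to carry the weights through the projector-difference terms, at which point you are essentially reconstructing the paper's contour-integral argument.
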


\begin{lemma}\label{T1}
For all $\beta\in\Ld$, if assumptions of Lemma~\ref{deltanb} are fulfilled then
\[\E[\|\widetilde\beta-\beta\|_\Gamma\mathbf 1_{\Delta_n^\complement}]\leq\frac Cn(1+\|\beta\|_\Gamma^2),\]
with $C>0$ independent of $\beta$ and $n$.

\end{lemma}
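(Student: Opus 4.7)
The plan is to combine a Cauchy--Schwarz decoupling, the polynomial tail bound on $\Delta_n^{\complement}$ supplied by Lemma~\ref{deltanb}, and a rather rough, but sufficient, deterministic control of $\widetilde\beta$ on $\widehat{\mathcal M}_n$. First I would split by the triangle inequality
\[\|\widetilde\beta-\beta\|_\Gamma \mathbf 1_{\Delta_n^\complement}\le \|\widetilde\beta\|_\Gamma\mathbf 1_{\Delta_n^\complement}+\|\beta\|_\Gamma \mathbf 1_{\Delta_n^\complement}.\]
The second, deterministic term is trivial: using $\|\beta\|_\Gamma\le 1+\|\beta\|_\Gamma^2$ and Lemma~\ref{deltanb}, it is dominated by $C n^{-6}(1+\|\beta\|_\Gamma^2)\le Cn^{-1}(1+\|\beta\|_\Gamma^2)$. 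The whole fight is therefore to show $\E[\|\widetilde\beta\|_\Gamma\mathbf 1_{\Delta_n^\complement}]\le Cn^{-1}(1+\|\beta\|_\Gamma^2)$, and for this I apply Cauchy--Schwarz:
\[\E[\|\widetilde\beta\|_\Gamma\mathbf 1_{\Delta_n^\complement}]\le \sqrt{\E[\|\widetilde\beta\|_\Gamma^2]}\,\sqrt{\Pb(\Delta_n^\complement)}\le \frac{C}{n^3}\sqrt{\E[\|\widetilde\beta\|_\Gamma^2]},\]
again by Lemma~\ref{deltanb}. It is then enough to secure the crude moment bound $\E[\|\widetilde\beta\|_\Gamma^2]\le Cn^4(1+\|\beta\|_\Gamma^2)$, since $\sqrt{1+\|\beta\|_\Gamma^2}\le 1+\|\beta\|_\Gamma^2$.

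The rough moment bound will exploit the construction of $\widehat{\mathcal M}_n$. Because every $\hat\lambda_j$ appearing in $\widetilde\beta=\sum_{j=1}^{\hat m}\langle\hat g,\hat\psi_j\rangle\hat\lambda_j^{-1}\hat\psi_j$ satisfies $\hat\lambda_j\ge \mathfrak s_n \ge n^{-2}$ for $n$ large enough, Parseval gives
\[\|\widetilde\beta\|^2=\sum_{j=1}^{\hat m}\frac{\langle\hat g,\hat\psi_j\rangle^2}{\hat\lambda_j^2}\le \frac{\|\hat g\|^2}{\mathfrak s_n^2}\le C n^4\|\hat g\|^2,\]
and then $\|\widetilde\beta\|_\Gamma^2=\langle\Gamma\widetilde\beta,\widetilde\beta\rangle\le\lambda_1\|\widetilde\beta\|^2\le C\lambda_1 n^4\|\hat g\|^2$. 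The remaining task is to estimate $\E[\|\hat g\|^2]$: by Jensen applied to the i.i.d. sum,
\[\E[\|\hat g\|^2]\le \E[Y^2\|X\|^2]\le 2\E[\langle\beta,X\rangle^2\|X\|^2]+2\sigma^2\,\E[\|X\|^2].\]
Under \textbf{H2}, $\E[\langle X,\psi_j\rangle^{2l}]\le l!b^{l-1}\lambda_j^l$, and the elementary inequalities $\E[\|X\|^2]=\mathrm{tr}(\Gamma)<\infty$ and $\E[\|X\|^4]\le 2b\,\mathrm{tr}(\Gamma)^2$ follow by Cauchy--Schwarz on the double sum $\sum_{j,k}\E[\langle X,\psi_j\rangle^2\langle X,\psi_k\rangle^2]$. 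To get $\|\beta\|_\Gamma^2$ (not the rougher $\|\beta\|^2$), I use \textbf{H3}: the coordinates $\langle X,\psi_j\rangle$ are independent and centred with $\E[\langle X,\psi_j\rangle^2]=\lambda_j$, so Rosenthal (or a direct fourth-moment expansion) and \textbf{H2} give $\E[\langle\beta,X\rangle^4]\le C\|\beta\|_\Gamma^4$. Cauchy--Schwarz then yields
\[\E[\langle\beta,X\rangle^2\|X\|^2]\le\sqrt{\E[\langle\beta,X\rangle^4]\,\E[\|X\|^4]}\le C\|\beta\|_\Gamma^2,\]
hence $\E[\|\hat g\|^2]\le C(\sigma^2+\|\beta\|_\Gamma^2)$ and finally $\E[\|\widetilde\beta\|_\Gamma^2]\le Cn^4(1+\|\beta\|_\Gamma^2)$.

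Plugging this into the Cauchy--Schwarz display gives $\E[\|\widetilde\beta\|_\Gamma\mathbf 1_{\Delta_n^\complement}]\le (C/n)\sqrt{1+\|\beta\|_\Gamma^2}\le (C/n)(1+\|\beta\|_\Gamma^2)$, which combined with the deterministic $\|\beta\|_\Gamma$ piece completes the proof. The only genuinely delicate point is the passage from $\|\beta\|^2$ to $\|\beta\|_\Gamma^2$ in the moment estimate for $\hat g$: the naive bound $\langle\beta,X\rangle^2\le\|\beta\|^2\|X\|^2$ would produce a $(1+\|\beta\|^2)$ factor, strictly worse than the statement; to land on $\|\beta\|_\Gamma^2$ one really must exploit the independence assumption \textbf{H3} (with the Gaussian-type fourth-moment control coming from \textbf{H2}). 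Everything else is driven by the $n^{-6}$ tail from Lemma~\ref{deltanb}, which gives plenty of room to absorb any polynomial-in-$n$ growth of the moments of $\widetilde\beta$.
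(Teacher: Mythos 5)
Your proof is correct, but it takes a genuinely different route from the paper's. The paper decomposes $\hat\beta_{\hat m}=\hat\Pi_{\hat m}\beta+R_{\hat m}$ with $R_{\hat m}$ the pure noise term, enlarges the random summation range $j\le\hat m$ to the deterministic range $j\le 20\lfloor\sqrt n\rfloor$, and then exploits the independence of the $\varepsilon_i$ from $\mathbf X$ (together with the $\mathbf X$-measurability of $\Delta_n$) to compute the conditional second moment of $R_{\hat m}$ exactly; this lets it multiply by the \emph{full} probability $\Pb(\Delta_n^\complement)\le Cn^{-6}$ and only absorb a factor $\mathfrak{s}_n^{-1}/\sqrt n\asymp n^{3/2}$. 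You instead pay half the tail exponent through Cauchy--Schwarz, $\E[\|\widetilde\beta\|_\Gamma\mathbf 1_{\Delta_n^\complement}]\le\sqrt{\E[\|\widetilde\beta\|_\Gamma^2]}\,\Pb(\Delta_n^\complement)^{1/2}$, and compensate with the cruder deterministic bound $\|\widetilde\beta\|^2\le\mathfrak{s}_n^{-2}\|\hat g\|^2\lesssim n^4\|\hat g\|^2$ plus a moment computation for $\hat g$; the $n^{-6}$ tail leaves enough slack for either bookkeeping. Your route costs you the extra moment estimates $\E[\|X\|^4]\le 2b\,\mathrm{tr}(\Gamma)^2$ and $\E[\langle\beta,X\rangle^4]\le C\|\beta\|_\Gamma^4$, for which you correctly invoke \textbf{H2} and \textbf{H3} --- both available since they are among the hypotheses of Lemma~\ref{deltanb} --- and your care in landing on $\|\beta\|_\Gamma^2$ rather than $\|\beta\|^2$ via the independence of the coordinates is a nice touch (the paper's own bound of the bias piece by $\|\beta\|_\Gamma^2\Pb(\Delta_n^\complement)$ is actually looser than it claims, since $\hat\Pi_{\hat m}$ is not an orthogonal projector for $\|\cdot\|_\Gamma$). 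One caveat: you prove the inequality for the unsquared norm exactly as the lemma is stated, but the statement is almost certainly a typo for $\E[\|\widetilde\beta-\beta\|_\Gamma^2\mathbf 1_{\Delta_n^\complement}]$, which is what the proof of Theorem~\ref{majo} actually uses and what the paper's own argument bounds; your method still delivers the squared version, but the Cauchy--Schwarz step would then require $\E[\|\widetilde\beta\|_\Gamma^4]^{1/2}$, hence a fourth moment of $\hat g$ and thus $\E[\varepsilon^4]<\infty$ from \textbf{H1} and the $l=4$ case of \textbf{H2} --- worth spelling out if you intend the result in the form it is used.
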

\begin{proof}
First remark that, as $Y_i=<\beta,X_i>+\varepsilon_i$, for all $m\in\widehat{\Mnr}$
\[\hat\beta_m=\sum_{j=1}^m\frac{1}{n}\sum_{i=1}^nY_i\frac{<X_i,\hat\psi_j>}{\hat\lambda_j}\hat\psi_j=\hat\Pi_m\beta+R_m,\]
where 
\[R_m:=\sum_{j=1}^m\frac{1}{n}\sum_{i=1}^n\varepsilon_i\frac{<X_i,\hat\psi_j>}{\hat\lambda_j}\hat\psi_j.\]
Then 
\begin{eqnarray*}
\E[\|\widetilde\beta-\beta\|_\Gamma\mathbf 1_{\Delta_n^\complement}]&\leq& 2\E\left[\|\hat\Pi_{\hat m}\beta-\beta\|_\Gamma^2\mathbf 1_{\Delta_n^\complement}\right]+2\E\left[\|R_{\hat m}\|_\Gamma^2\mathbf 1_{\Delta_n^\complement}\right]\leq 2\|\beta\|_\Gamma^2\Pb(\Delta_n^\complement)+2\E\left[\|R_{\hat m}\|_\Gamma^2\mathbf 1_{\Delta_n^\complement}\right].
\end{eqnarray*}
The first term can be easily bounded using the results of Lemma~\ref{deltanb}. Then we focus on the second term, the idea is to bound the quantity $\|R_{\hat m}\|_\Gamma$ by  $\|R_{\hat m}\|$ which can be written simply,
\[\E\left[\|R_{\hat m}\|_\Gamma^2\mathbf{1}_{\Delta_n^\complement}\right]\leq\rho(\Gamma)\E\left[\|R_{\hat m}\|^2\mathbf{1}_{\Delta_n^\complement}\right]=\rho(\Gamma)\E\left[\sum_{j=1}^{\hat m}<R_{\hat m},\hat\psi_j>^2\mathbf{1}_{\Delta_n^\complement}\right].\]
Now remark that, since $\hat m\leq \hat N_n\leq20\sqrt{n}$,
\[\sum_{j=1}^{\hat m}<R_{\hat m},\hat\psi_j>^2\leq\sum_{j=1}^{\hat m}\left(\frac{1}{n}\sum_{i=1}^n\varepsilon_i\frac{<X_i,\hat\psi_j>}{\hat\lambda_j}\right)^2\leq\mathfrak{s}_n^{-1} \sum_{j=1}^{20\lfloor\sqrt{n}\rfloor}\left(\frac{1}{n}\sum_{i=1}^n\varepsilon_i\frac{<X_i,\hat\psi_j>}{\sqrt{\hat\lambda_j}}\right)^2\mathbf{1}_{\{\hat\lambda_j>0\}}.\]

Then we have, by independence of $\varepsilon_i$ with $X_i$ and $\Delta_n$
\[\E\left[\|R_{\hat m}\|_\Gamma^2\mathbf{1}_{\Delta_n^\complement}\right]\leq20\rho(\Gamma) \frac{\sigma^2}{\sqrt n}\mathfrak{s}_n^{-1}\Pb(\Delta_n^\complement),\]
and the results come from lemmas~\ref{deltanb} and \ref{lem:hNn} and the definition of $\mathfrak{s}_n$. 
\end{proof}

\begin{lemma}\label{lem:hNn}If Assumption~\textbf{H2} is fulfilled and $n\geq 6$, then
\[\Pb(\hat N_n>N_n)\leq Cn^{-6},\]
with $C$ independent of $\beta$ and $n$.
\end{lemma}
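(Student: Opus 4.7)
The plan is to reduce $\{\hat N_n>N_n\}$ to a fine-scale concentration event for $\Gamma_n-\Gamma$ restricted to the tail eigenspace of $\Gamma$, and then estimate that event by Markov's inequality applied to a high moment of the restricted Hilbert--Schmidt norm.

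First, the trivial case: if $N_n=\lfloor 20\sqrt{n/\ln^3(n)}\rfloor$ saturates the cap in its definition, then by the very definition of $\hat N_n$ one has $\hat N_n\leq N_n$ deterministically. Otherwise $\lambda_{N_n+1}<n^{-2}$ and $N_n+1\leq 20\sqrt{n/\ln^3 n}$; since $(\hat\lambda_j)$ is non-increasing in $j$, in this case $\{\hat N_n>N_n\}=\{\hat\lambda_{N_n+1}\geq\mathfrak{s}_n\}$. On this event, the subspace $\hat V:=\text{span}\{\hat\psi_1,\dots,\hat\psi_{N_n+1}\}$ has dimension $N_n+1$ and the Rayleigh quotient of $\Gamma_n$ on $\hat V$ is bounded below by $\hat\lambda_{N_n+1}\geq\mathfrak{s}_n$. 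Setting $V_0:=\text{span}\{\psi_j:j>N_n\}$, the rank--nullity theorem applied to the restriction $\Pi_{N_n}|_{\hat V}$ gives $\dim(\hat V\cap V_0)\geq 1$, so we obtain a unit vector $f_0\in\hat V\cap V_0$ with $<\Gamma_n f_0,f_0>\geq\mathfrak{s}_n$ and $<\Gamma f_0,f_0>\leq\lambda_{N_n+1}<n^{-2}$. Hence
\[\{\hat N_n>N_n\}\subseteq\left\{\|Q_n(\Gamma_n-\Gamma)Q_n\|_\infty\geq\mathfrak{s}_n-n^{-2}\right\},\]
with $Q_n:=I-\Pi_{N_n}$ the orthogonal projector onto $V_0$ and $\mathfrak{s}_n-n^{-2}=n^{-2}(1-2/\ln^2 n)\geq cn^{-2}$ for $n\geq 6$.

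Next I bound the latter probability via Markov's inequality on the $2q$-th moment of $\|Q_n(\Gamma_n-\Gamma)Q_n\|_{HS}$. Writing $\Gamma_n-\Gamma=\tfrac 1n\sum_{i=1}^n Y_i$ with $Y_i:=X_i\otimes X_i-\Gamma$ i.i.d.\ centred, a Rosenthal-type inequality for Hilbert--Schmidt-valued sums yields
\[\E[\|Q_n(\Gamma_n-\Gamma)Q_n\|_{HS}^{2q}]\leq C_q\!\left(\frac{\E[\|Q_nYQ_n\|_{HS}^2]^q}{n^q}+\frac{\E[\|Q_nYQ_n\|_{HS}^{2q}]}{n^{2q-1}}\right).\]
Since $\|Q_nYQ_n\|_{HS}\leq\|Q_n X\|^2+\|Q_n\Gamma Q_n\|_{HS}$, and Minkowski's inequality combined with \textbf{H2} gives $(\E[\|Q_n X\|^{2r}])^{1/r}\leq C_r\sum_{j>N_n}\lambda_j$, the bound simplifies to
\[\E[\|Q_n(\Gamma_n-\Gamma)Q_n\|_{HS}^{2q}]\leq C_q\,n^{-q}\Bigl(\textstyle\sum_{j>N_n}\lambda_j\Bigr)^{2q},\]
and Markov's inequality then produces $\Pb(\hat N_n>N_n)\leq C_q\,n^{3q}(\sum_{j>N_n}\lambda_j)^{2q}$. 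Taking $q$ large enough, this is $\leq Cn^{-6}$ provided $\sum_{j>N_n}\lambda_j\lesssim n^{-3/2}$, a bound that holds in both regimes: in the polynomial case \textbf{(P)} the non-triviality condition $N_n<\lfloor 20\sqrt{n/\ln^3 n}\rfloor$ forces $a>4$, and in the exponential case \textbf{(E)} the tail is super-polynomially small.

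The main obstacle is precisely the last step: the target deviation scale $n^{-2}$ is far below the typical operator-norm scale $n^{-1/2}$ of $\Gamma_n-\Gamma$, so the concentration gain comes entirely from restricting to the tail subspace $V_0$ on which $\|\Gamma\|_\infty<n^{-2}$. Matching the polynomial factor $n^{3q}$ coming from Markov's inequality against this gain relies crucially on the factorial moment control provided by \textbf{H2}.
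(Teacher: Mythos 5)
Your argument is correct in outline but takes a genuinely different route from the paper's. The paper reduces the event in essentially the same way (on $\{\hat N_n>N_n\}$ one must compare $\lambda_{N_n+1}<n^{-2}$ with $\hat\lambda_{\hat N_n}\geq\mathfrak{s}_n$), but it then passes through the perturbation-theoretic event $\mathcal A_n=\bigcap_j\{|\hat\lambda_j-\lambda_j|<\delta_j/2\}$: it shows $\{\lambda_{N_n+1}\geq\lambda_{\hat N_n}\}\cap\mathcal A_n=\emptyset$ and bounds $\Pb(\mathcal A_n^\complement)$ via $\mathcal J_n\subset\mathcal A_n$ and the Hilbert-valued Bernstein inequality of Lemma~\ref{pbJn}, obtaining $2\exp(-c^*\ln^2 n)$. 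That route controls all empirical eigenvalues simultaneously and pays for it with the gap quantities $\delta_j$ and $\mathbf a_k$. Your Courant--Fischer reduction to a unit vector $f_0\in\hat V\cap V_0$ and the compression $Q_n(\Gamma_n-\Gamma)Q_n$ is a clean alternative: it needs no eigenvalue gaps, only the tail sum $\sum_{j>N_n}\lambda_j$ together with \textbf{H2}, and the Rosenthal-plus-Markov step is elementary. The price is a merely polynomial (rather than $\exp(-c\ln^2 n)$) decay, which is still enough for $n^{-6}$, and an explicit reliance on the decay regimes \textbf{(P)}/\textbf{(E)}.

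One step needs repair. The sentence ``taking $q$ large enough, this is $\leq Cn^{-6}$ provided $\sum_{j>N_n}\lambda_j\lesssim n^{-3/2}$'' fails at the critical scale: if the tail sum is exactly of order $n^{-3/2}$, then $n^{3q}\bigl(\sum_{j>N_n}\lambda_j\bigr)^{2q}$ is bounded below by a positive constant for every $q$, so no choice of $q$ yields decay. This is precisely the regime where your deviation threshold $n^{-2}$ coincides with the typical size $n^{-1/2}\sum_{j>N_n}\lambda_j$ of the compressed fluctuation, so the Hilbert--Schmidt moment bound sees no concentration at all. You need a strict power gain, $\sum_{j>N_n}\lambda_j\leq C n^{-3/2-\epsilon}$ for some $\epsilon>0$. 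Under \textbf{(P)} with $a>4$ this holds, since $\sum_{j>N_n}\lambda_j\lesssim N_n^{1-a}\lesssim n^{2/a-2}$ and $2/a-2<-3/2$, so $q\geq 6a/(a-4)$ suffices; for $a\leq 4$ (including the borderline $a=4$) you should instead observe that the cap $N_n=\lfloor 20\sqrt{n/\ln^3 n}\rfloor$ binds for all $n$ large enough, so your non-trivial case arises for at most finitely many $n$ and is absorbed into the constant $C$. With that fix the proof closes; as written, the quantitative conclusion does not follow from the stated hypothesis on the tail sum.
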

\begin{proof}
The sequence $(\lambda_j)_{j\geq 1}$ being non-increasing we have
\[\Pb(\hat N_n>N_n)\leq\Pb(\lambda_{N_n+1}\geq \lambda_{\hat N_n})\leq\Pb\left(\left\{\lambda_{N_n+1}\geq \lambda_{\hat N_n}\right\}\cap\mathcal A_n\right)+\Pb\left(\mathcal A_n^\complement\right) ,\]
where $\mathcal{A}_n$ is defined by Equation~(\ref{An}) in Section~\ref{sec:perturbation}.
Then by definition of $\mathcal A_n$,
\[\left\{\lambda_{N_n+1}\geq \lambda_{\hat N_n}\right\}\cap\mathcal A_n\subset\left\{\lambda_{N_n+1}\geq \hat\lambda_{\hat N_n}-\frac{\delta_{\hat N_n}}{2}\right\}=\emptyset,\]
since $\lambda_{N_n+1}<n^{-2}$, $\hat\lambda_{N_n}\geq\mathfrak{s}_n$ and $\delta_{\hat N_n}\leq \frac{\lambda_{\hat N_n}}{4}\leq \frac{n^{-2}}{4}$.
Thus the proof is finished following the conclusions of Remark~\ref{rem:defJn} in Section~\ref{sec:perturbation}. 
\end{proof}
\subsubsection*{Acknowledgements}

The authors would like to warmly thank Dr Nicolas Verzelen whose helpful comments and suggestions have improved significantly the presentation of this work. 
\begin{appendices}
\section{Perturbation theory background}
\label{subsec:perturbation_theory}

Many intermediate results are based on perturbation theory. We give in this section some preliminary results on this subject. The aim is to control the proximity between the random space $\hat S_m$ spanned by the eigenfunctions of $\Gamma_n$ and the space $S_m$ spanned by the eigenfunctions of $\Gamma$. 

Recall that $\Pi_m$ (resp. $\hat\Pi_m$) denotes the orthonormal projector onto $S_m$ (resp. $\hat S_m$) and $\pi_j$ (resp. $\hat\pi_j$) denotes the orthonormal projector onto $\text{span}\{\psi_j\}$ (resp. $\text{span}\{\hat\psi_j\}$). We write the difference of projectors $\Pi_m-\hat\Pi_m$ (or equivalently $\pi_j-\hat\pi_j$) explicitly in terms of the operators difference $\Gamma-\Gamma_n$ easier to handle.
 
\subsection{Exponential inequalities}
In the proofs, we use the following version of Bernstein's Inequality:
\begin{lemma}[Birgé et Massart (1998)~\cite{birge_minimum_1998}]\label{bernstein} Let $Z_1$, ..., $Z_n$ be independent random variables satisfying the moments conditions
\[\frac{1}{n}\sum_{i=1}^n\E[|Z_i^m|]\leq \frac{m!}{2}v^2c^{m-2}, \text{ for all }m\geq 2,\]
for some positive constants $v$ and $c$. Then, for any positive $\varepsilon$,
\[\Pb\left(\frac{1}{n}\sum_{i=1}^n(Z_i-\E[Z_i])\geq \varepsilon\right)\leq\exp\left(\frac{-n\varepsilon^2/2}{v^2+c\varepsilon}\right).\]
\end{lemma}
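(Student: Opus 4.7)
The plan is to apply the classical Chernoff--Cramér argument. Fixing $\lambda \in (0, 1/c)$, the exponential Markov inequality combined with independence of the $Z_i$'s gives
\begin{equation*}
\Pb\left(\frac{1}{n}\sum_{i=1}^n (Z_i - \E[Z_i]) \geq \varepsilon\right) \leq e^{-n\lambda\varepsilon}\, \prod_{i=1}^n \E\bigl[e^{\lambda(Z_i - \E[Z_i])}\bigr],
\end{equation*}
so the problem reduces to controlling the product of Laplace transforms and then choosing $\lambda$ appropriately.

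To bound these transforms, I would expand the exponential as a power series and use $\E[Z_i^m] \leq \E[|Z_i|^m]$ to write
\begin{equation*}
\E[e^{\lambda Z_i}] \leq 1 + \lambda \E[Z_i] + \sum_{m \geq 2} \frac{\lambda^m}{m!} \E[|Z_i|^m].
\end{equation*}
Applying $1 + x \leq e^x$ on each factor absorbs the first two terms into an exponential, and taking the product over $i$ makes the mean terms cancel against the centering on the left-hand side. Using the averaged Bernstein hypothesis, the residual series can be summed as a geometric series $\sum_{m \geq 2} (c\lambda)^{m-2}$, which converges precisely because $c\lambda < 1$; this yields the sub-gamma bound
\begin{equation*}
\prod_{i=1}^n \E\bigl[e^{\lambda(Z_i - \E[Z_i])}\bigr] \leq \exp\!\left(\frac{nv^2\lambda^2/2}{1 - c\lambda}\right).
\end{equation*}

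It then remains to minimize the exponent $-n\lambda\varepsilon + \frac{nv^2\lambda^2/2}{1 - c\lambda}$ over $\lambda \in (0, 1/c)$. The clean sub-optimal choice $\lambda = \varepsilon/(v^2 + c\varepsilon)$, for which $1 - c\lambda = v^2/(v^2+c\varepsilon)$, produces after a short algebraic simplification the announced bound $\exp\bigl(-n\varepsilon^2/(2(v^2 + c\varepsilon))\bigr)$. The only real technical point is the Laplace transform estimate, which is driven entirely by the particular shape of the Bernstein moment condition; the Chernoff step and the final one-parameter optimization are standard bookkeeping.
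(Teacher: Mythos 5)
Your proof is correct and complete: the Chernoff bound, the term-by-term expansion of the Laplace transform using the averaged moment hypothesis, the geometric summation for $c\lambda<1$, and the choice $\lambda=\varepsilon/(v^2+c\varepsilon)$ all check out and reproduce exactly the stated constant. The paper itself gives no proof of this lemma --- it is quoted directly from Birg\'e and Massart (1998) --- and your argument is precisely the standard Cram\'er--Chernoff derivation used there, so there is nothing to compare beyond noting that the only point deserving a word of justification is the interchange of expectation and series, which is licensed by the finiteness of $\E[e^{\lambda|Z_i|}]$ for $\lambda<1/c$ under the moment condition.
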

We use also a version of Lemma~\ref{bernstein} for Hilbert valued random variables which can be found in Bosq~\cite{bosq_linear_2000}.
\subsection{Preliminary notions}
\label{sec:perturbation}

Let $\gamma$ be either the rectangular path given by Figure~\ref{rect_contour} or the union  (for $j=1,...,m$) of the circular paths $\partial\Omega_j$ of center $\lambda_j$ and radius $\delta_j$ represented in Figure~\ref{circles}.

\begin{figure}
\begin{tikzpicture}
\draw (-1,0)--(0,0) node[below left]{$0$}--(1.5,0)node[below]{$\lambda_{m+1}$}--(2.5,0) node[below]{$\lambda_m$}--(4,0)node[below]{$\cdots$}--(6,0)node[below]{$\lambda_1$}--(12,0)node[below]{$2\lambda_1$}--(13,0);
\draw (0,-2)--(0,-1)node[left]{$-2\lambda_1$}--(0,1)node[left]{$2\lambda_1$}--(0,2);
\draw[red, thick] (2,-1) rectangle (12,1);
\draw (1.5,-1pt)--(1.5,1pt);
\draw (2,-1pt)--(2,1pt);
\draw (2.5,-1pt)--(2.5,1pt);
\draw (4,-1pt)--(4,1pt);
\draw (6,-1pt)--(6,1pt);
\draw (12,-1pt)--(12,1pt);
\draw (-1pt,-1)--(1pt,-1);
\draw (-1pt,1)--(1pt,1);
\draw [red,->, thick] (5,-1)--(5.5,-1);
\draw [red,->, thick] (12,0.3)--(12,0.7);
\draw [red,<-, thick] (5,1)--(5.5,1);
\draw [red,->, thick] (2,-0.3)--(2,-0.7);
\draw [red] (8.5,1)--(9,1.5) node[above right]{\textcolor{red}{$\gamma$}};
\draw [<-] (2,0.2)--(2.25,0.2)node[above]{$\delta_m$}--(2.5,0.2);
\draw [->] (2,0.2)--(2.5,0.2);
\end{tikzpicture}
\caption{\label{rect_contour}Rectangular contour} 
\end{figure}
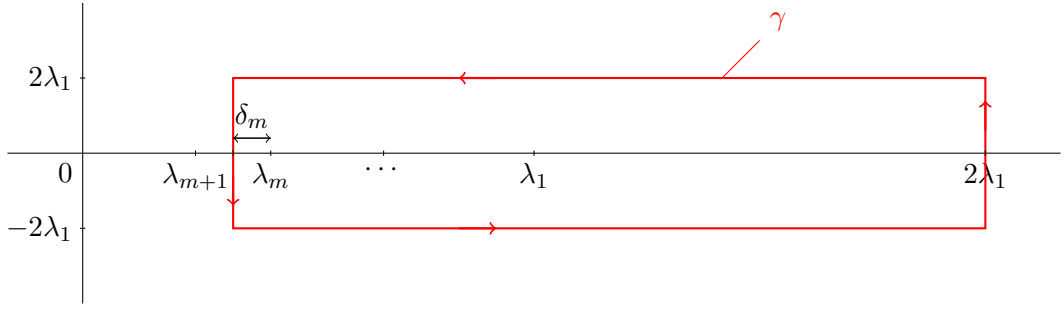

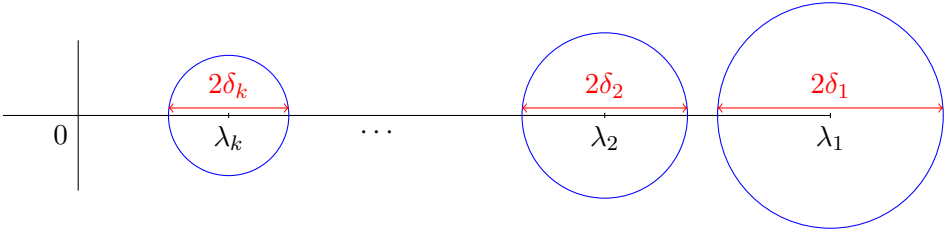
\begin{figure}
\begin{tikzpicture}
\draw (-1,0)--(0,0) node[below left]{$0$}--(2,0)node[below]{$\lambda_{k}$}--(4,0)node[below]{$\cdots$}--(7,0) node[below]{$\lambda_2$}--(10,0)node[below]{$\lambda_1$};
\draw (0,-1)--(0,1);
\draw (10,0)[blue] circle (1.5cm);
\draw (7,0)[blue] circle (1.1cm);
\draw (2,0)[blue] circle (0.8cm);
\draw [red,<->] (1.2,0.1)--(2,0.1)node[above]{$2\delta_{k}$}--(2.8,0.1);
\draw [red,<->] (5.9,0.1)--(7,0.1)node[above]{$2\delta_{2}$}--(8.1,0.1);
\draw [red,<->] (8.5,0.1)--(10,0.1)node[above]{$2\delta_{1}$}--(11.5,0.1);
\draw (10,-1pt)--(10,1pt);
\draw (7,-1pt)--(7,1pt);
\draw (2,-1pt)--(2,1pt);
\end{tikzpicture}
\caption{\label{circles}Contour made of disjoint circles} 
\end{figure}

We have, for all $j\geq 1$, $x\in[0,1]$:
\[\Pi_m\psi_j(x)=\mathbf{1}_{j\leq m}\psi_j(x)=\frac{1}{2i\pi}\int_{\gamma}\frac{\psi_j(x)}{\zeta-\lambda_j}d\zeta=\frac{1}{2i\pi}\int_{\gamma}(\zeta I-\Gamma)^{-1}\psi_j(x)d\zeta,\]
and
\begin{equation}
\label{pij}\Pi_m=\frac{1}{2i\pi}\int_{\gamma}(\zeta I-\Gamma)^{-1}d\zeta,
\end{equation}
 we refer to Chapter III of Dunford and Schwartz~\cite{dunford_schwartz} for an exact definition and properties of this integral.
 
 Now the aim is to write similarly the random projector $\hat\Pi_m$. This can be done if, for all $j\leq m$, $\hat\lambda_j$ is in the interior of $\gamma$. 
 
 For $t>0$, let $\mathcal J_\gamma(t)$ be the set
 \begin{equation}\label{defJn}
 \mathcal{J}_\gamma(t)=\left\{\sup_{\zeta\in\text{supp}(\gamma)}\left\|\mathcal{T}(\zeta)\right\|_\infty<t\right\},
 \end{equation}
 where $\mathcal{T}(\zeta):=R^{1/2}(\zeta)(\Gamma_n- \Gamma)R^{1/2}(\zeta)$ with $R(\zeta)=(\zeta I-\Gamma)^{-1}$.
Define also the set
\begin{equation}\label{An}
\mathcal A_n:=\bigcap_{j=1}^{n}\left\{|\hat\lambda_j-\lambda_j|<\frac{\delta_j}{2}\right\}.
\end{equation}

 \begin{lemma}
\label{inclu}Let $t<1/2$, then for both circular and rectangular path $\gamma$ $\mathcal{J}_\gamma(t)
\subset\mathcal{A}_{n}$. 
\end{lemma}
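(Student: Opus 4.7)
The plan is to exploit the Neumann-series representation of the resolvent. On $\mathcal{J}_\gamma(t)$ with $t<1/2<1$ one has the symmetric factorisation
\[\zeta I-\Gamma_n \;=\; R^{-1/2}(\zeta)\bigl(I-\mathcal{T}(\zeta)\bigr)R^{-1/2}(\zeta),\]
and since $\|\mathcal{T}(\zeta)\|_\infty<1$ uniformly on $\mathrm{supp}(\gamma)$, the factor $I-\mathcal{T}(\zeta)$ is invertible with $\|(I-\mathcal{T}(\zeta))^{-1}\|_\infty\leq (1-t)^{-1}<2$. Consequently the resolvent $(\zeta I-\Gamma_n)^{-1}$ exists for every $\zeta$ on the contour, so the curve $\gamma$ avoids the spectrum of $\Gamma_n$ and the Riesz spectral projector
\[\hat P_\gamma \;=\; \frac{1}{2i\pi}\oint_\gamma(\zeta I-\Gamma_n)^{-1}d\zeta\]
is well-defined and equals the orthogonal projector onto the sum of eigenspaces of $\Gamma_n$ corresponding to eigenvalues enclosed by $\gamma$.

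Next I would compare $\hat P_\gamma$ with its deterministic counterpart $P_\gamma=(2i\pi)^{-1}\oint_\gamma R(\zeta)d\zeta$. Writing $(I-\mathcal{T}(\zeta))^{-1}-I=\mathcal{T}(\zeta)(I-\mathcal{T}(\zeta))^{-1}$ gives
\[\hat P_\gamma-P_\gamma \;=\; \frac{1}{2i\pi}\oint_\gamma R^{1/2}(\zeta)\,\mathcal{T}(\zeta)\,\bigl(I-\mathcal{T}(\zeta)\bigr)^{-1}R^{1/2}(\zeta)\,d\zeta,\]
and on each circular path $\partial\Omega_j$ (or on the rectangle) the parameterisation combined with $\|R^{1/2}(\zeta)\|_\infty^2=\|R(\zeta)\|_\infty$ yields an operator-norm bound of the form $\|\hat P_\gamma-P_\gamma\|_\infty\leq t/(1-t)<1$. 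The classical fact that two bounded projectors whose difference has norm strictly less than one have the same finite rank then forces $\mathrm{rank}(\hat P_{\partial\Omega_j})=\mathrm{rank}(\pi_j)=1$; that is, each disk $\Omega_j$ contains exactly one eigenvalue of $\Gamma_n$. Applying the same argument to the rectangle gives $m$ enclosed eigenvalues in total, and since the circles $\Omega_1,\ldots,\Omega_m$ are pairwise disjoint, the natural decreasing labelling of the eigenvalues of $\Gamma_n$ must pair $\hat\lambda_j$ with the unique eigenvalue inside $\Omega_j$.

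The main obstacle is sharpening the resulting inclusion $|\hat\lambda_j-\lambda_j|<\delta_j$ to the inclusion $|\hat\lambda_j-\lambda_j|<\delta_j/2$ demanded by $\mathcal{A}_n$. For this I would pass to a trace computation: since $\hat P_{\partial\Omega_j}$ is rank one,
\[\hat\lambda_j=\mathrm{tr}\bigl(\Gamma_n \hat P_{\partial\Omega_j}\bigr)
=\lambda_j+\mathrm{tr}\bigl((\Gamma_n-\Gamma)\hat P_{\partial\Omega_j}\bigr)+\mathrm{tr}\bigl((\Gamma-\lambda_j I)(\hat P_{\partial\Omega_j}-\pi_j)\bigr),\]
and each remainder can be re-expressed as a contour integral of $\mathcal{T}(\zeta)$ against bounded resolvent factors. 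The uniform bound $\|\mathcal{T}(\zeta)\|_\infty<t<1/2$ then contributes a prefactor strictly less than one, which together with the circle radius $\delta_j$ produces a deviation strictly smaller than $\delta_j/2$. Once this refined estimate is established on each $\partial\Omega_j$, taking intersections over $j$ yields $\mathcal{J}_\gamma(t)\subset\mathcal{A}_n$; the rectangular case is handled identically after noting that the rectangle simultaneously separates $\{\lambda_1,\ldots,\lambda_m\}$ from $\{\lambda_{m+1},\lambda_{m+2},\ldots\}$, so that no spurious eigenvalue of $\Gamma_n$ can cross over.
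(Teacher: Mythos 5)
The paper does not actually prove this lemma: it states only that the proof ``may be deduced from'' Lemma~14 of Mas and Ruymgaart~\cite{mas_high_2012}, so your attempt has to stand on its own. Its first half does: the factorisation $\zeta I-\Gamma_n=R^{-1/2}(\zeta)\bigl(I-\mathcal{T}(\zeta)\bigr)R^{-1/2}(\zeta)$, the Neumann bound $\|(I-\mathcal{T}(\zeta))^{-1}\|_\infty\leq(1-t)^{-1}$, the estimate $\|\hat P_{\partial\Omega_j}-\pi_j\|_\infty\leq t/(1-t)<1$ (using $\|R(\zeta)\|_\infty\leq\delta_j^{-1}$ on a circle of circumference $2\pi\delta_j$), and the equal-rank argument correctly localise exactly one eigenvalue of $\Gamma_n$ in each $\Omega_j$, giving $|\hat\lambda_j-\lambda_j|<\delta_j$ after the labelling is sorted out. (Two details you wave at do need words: ruling out empirical eigenvalues to the right of the rectangle, and the fact that on the rectangle $\oint\|R(\zeta)\|_\infty\,|d\zeta|$ is of order $\ln(\lambda_1/\delta_m)$, not $O(1)$, so the rank comparison there is not ``identical'' to the circular case.)

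The genuine gap is exactly where you flag it, and the trace identity does not close it. The hypothesis controls $\mathcal{T}(\zeta)=R^{1/2}(\zeta)(\Gamma_n-\Gamma)R^{1/2}(\zeta)$, in which only the $\psi_j$-direction carries the large weight $|\zeta-\lambda_j|^{-1/2}=\delta_j^{-1/2}$; the weights in the other directions are of order $|\lambda_j-\lambda_k|^{-1/2}$. Undoing the sandwiching to bound $\mathrm{tr}\bigl((\Gamma_n-\Gamma)\hat P_{\partial\Omega_j}\bigr)$ and $\mathrm{tr}\bigl((\Gamma-\lambda_jI)(\hat P_{\partial\Omega_j}-\pi_j)\bigr)$ costs factors $\|R^{-1/2}(\zeta)\|_\infty\asymp(|\zeta|+\lambda_1)^{1/2}$ on the cross and off-diagonal contributions, and the resulting estimates are of order $t\sqrt{\lambda_j\delta_j}$ (or even $t\lambda_j$), not $t\delta_j$. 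Since the radii satisfy $\delta_j\ll\lambda_j$ for any of the eigenvalue sequences considered here ($\delta_j\lesssim\lambda_j-\lambda_{j+1}$ and $\delta_j\leq\lambda_j/4$), a bound of order $t\sqrt{\lambda_j\delta_j}$ is not below $\delta_j/2$ for a fixed $t<1/2$: the sentence ``the uniform bound then contributes a prefactor strictly less than one, which together with the circle radius produces a deviation strictly smaller than $\delta_j/2$'' is the whole lemma, asserted rather than proved. The argument that does work stays at the level of $\mathcal{T}$ and explains the threshold $1/2$: for any real $\mu$ with $\delta_j/2\leq|\mu-\lambda_j|\leq\delta_j$, write $|R(\mu)|^{1/2}=D\,|R(\zeta')|^{1/2}$ with $\zeta'=\lambda_j\pm\delta_j$ the real point of $\partial\Omega_j$ on the same side, where $D$ is diagonal with $\|D\|_\infty^2=\sup_k|\zeta'-\lambda_k|/|\mu-\lambda_k|\leq2$ (this uses $\delta_j\leq\frac12\min_{k\neq j}|\lambda_k-\lambda_j|$); hence $\|\mathcal{T}(\mu)\|_\infty\leq2t<1$, so $I-\mathcal{T}(\mu)$ is invertible and $\mu\notin\sigma(\Gamma_n)$. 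Combined with your rank argument, the unique eigenvalue in $\Omega_j$ is forced into the open disk of radius $\delta_j/2$. You should either supply this comparison step or, as the authors do, invoke Mas and Ruymgaart directly.
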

\noindent The proof of Lemma~\ref{inclu} may be deduced from the proof of Lemma~14, p.12 of Mas and Ruymgaart~\cite{mas_high_2012}. 

\hspace{0.2cm}
Now Lemma~\ref{inclu}, allows us to write that for all $t<1/2$:
\begin{equation}\label{hpij}
\hat\Pi_m=\mathbf 1_{\mathcal J_\gamma(t)}\frac{1}{2i\pi}\int_{\gamma}(\zeta I-\Gamma_n)^{-1}d\zeta. 
\end{equation}
Define $\widehat{R}\left(  \zeta\right)  :=\left(  \zeta
I-\Gamma_{n}\right)  ^{-1}$, equations~(\ref{pij}) and~(\ref{hpij}) lead to:
  \begin{equation*}
  (\hat\Pi_m-\Pi_m)\mathbf 1_{\mathcal J_\gamma(t)}=\mathbf 1_{\mathcal J_\gamma(t)}\frac{1}{2i\pi}\int_{\gamma}\left(R\left(\zeta\right)-\widehat{R}\left(\zeta\right)\right)d\zeta, \text{ for all  }t<1/2.
  \end{equation*} 
   Then we rewrite the interior of the last integral by remarking that
   \begin{equation}\label{diff1}
   R\left(\zeta\right)-\widehat{R}\left(\zeta\right)=\widehat{R}\left(\zeta\right)\left(\Gamma-\Gamma_n\right)R\left(\zeta\right)=\widehat{R}\left(\zeta\right)\left(\zeta I-\Gamma\right)^{1/2}\mathcal{T}\left(\zeta\right)R^{1/2}\left(\zeta\right).
   \end{equation} 
  
By definition, when $ t<1$, on the set $\mathcal J_{\gamma}(t)$, the operator  $I-\mathcal{T}(\zeta)$ is invertible for all $\zeta\in\text{supp}(\gamma)$ and we have:
\[\left(I-\mathcal{T}\left(\zeta\right)\right)^{-1}=\left(\zeta I-\Gamma\right)^{1/2}\widehat{R}\left(\zeta\right)\left(\zeta I-\Gamma\right)^{1/2}.\]
Then Equation~(\ref{diff1}) leads to
\begin{equation*}
\left(\widehat{R}\left(  \zeta\right)  -R\left(  \zeta\right)\right)\mathbf 1_{\mathcal J_\gamma(t)}  =R^{1/2}\left(
\zeta\right)  \left[  I-\mathcal{T}\left(  \zeta\right)  \right]
^{-1}\mathcal{T}\left(  \zeta\right)  R^{1/2}\left(  \zeta\right)\mathbf 1_{\mathcal J_\gamma(t)}. 
\end{equation*}

We obtain in a similar way a rewriting of $(\hat\pi_j-\pi_j)\mathbf 1_{\mathcal J_\gamma(t)}$ or $(\Gamma^\dag-\Gamma_n^\dag)\mathbf 1_{\mathcal J_\gamma(t)}$ where $\Gamma^\dag$ and $\Gamma_n^\dag$ are defined by Equation~(\ref{gammadag}). All results are summarized in the following lemma.
\begin{lemma}\label{diff_proj}For all $t<1/2$ if $\gamma$ is either the union of circular contour represented in Figure~\ref{circles} or the rectangular contour given by Figure~\ref{rect_contour}: 
\begin{eqnarray*}
(\hat\Pi_m-\Pi_m)\mathbf 1_{\mathcal J_\gamma(t)}&=&\frac{1}{2i\pi}\int_\gamma R^{1/2}\left(
\zeta\right)  \left[  I-\mathcal{T}\left(  \zeta\right)  \right]
^{-1}\mathcal{T}\left(  \zeta\right)  R^{1/2}\left(  \zeta\right)d\zeta\mathbf 1_{\mathcal J_\gamma(t)}\\
(\hat\pi_j-\pi_j)\mathbf 1_{\mathcal J_\gamma(t)}&=&\frac{1}{2i\pi}\int_{\partial\Omega_j} R^{1/2}\left(
\zeta\right)  \left[  I-\mathcal{T}\left(  \zeta\right)  \right]
^{-1}\mathcal{T}\left(  \zeta\right)  R^{1/2}\left(  \zeta\right)d\zeta\mathbf 1_{\mathcal J_\gamma(t)}\\
(\Gamma^\dag-\Gamma_n^\dag)\mathbf 1_{\mathcal J_\gamma(t)}&=&\frac{1}{2i\pi}\int_\gamma\frac 1\zeta R^{1/2}\left(
\zeta\right)  \left[  I-\mathcal{T}\left(  \zeta\right)  \right]
^{-1}\mathcal{T}\left(  \zeta\right)  R^{1/2}\left(  \zeta\right)d\zeta\mathbf 1_{\mathcal J_\gamma(t)}.
\end{eqnarray*}
\end{lemma}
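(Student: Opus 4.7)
The three identities are unified in spirit: each is obtained by writing a spectral quantity for $\Gamma$ and for $\Gamma_n$ as a Dunford--Riesz contour integral of the appropriate resolvent, subtracting, and then rewriting the resolvent difference through $\mathcal{T}(\zeta)$. The non-trivial input — that on $\mathcal{J}_\gamma(t)$ the contour $\gamma$ indeed encloses the right number of empirical eigenvalues and that $I-\mathcal{T}(\zeta)$ is invertible for every $\zeta\in\mathrm{supp}(\gamma)$ — is already secured by Lemma~\ref{inclu} and by the bound $t<1/2<1$.

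\textbf{Step 1 (projectors onto $S_m$ and $\hat S_m$).} Combining (\ref{pij}) and (\ref{hpij}) gives
\[
(\hat\Pi_m-\Pi_m)\mathbf 1_{\mathcal J_\gamma(t)}=\frac{1}{2i\pi}\int_\gamma\bigl(\widehat R(\zeta)-R(\zeta)\bigr)d\zeta\,\mathbf 1_{\mathcal J_\gamma(t)}.
\]
Starting from (\ref{diff1}) and using the identity $\widehat R(\zeta)=R^{1/2}(\zeta)(I-\mathcal{T}(\zeta))^{-1}R^{1/2}(\zeta)$ already derived in the excerpt, one obtains the factorization
\[
\widehat R(\zeta)-R(\zeta)=R^{1/2}(\zeta)\bigl[(I-\mathcal{T}(\zeta))^{-1}-I\bigr]R^{1/2}(\zeta)=R^{1/2}(\zeta)(I-\mathcal{T}(\zeta))^{-1}\mathcal{T}(\zeta)R^{1/2}(\zeta),
\]
since $(I-\mathcal T)^{-1}-I=(I-\mathcal T)^{-1}\mathcal T$. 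Plugging this back into the contour integral gives the first claim.

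\textbf{Step 2 (rank-one projectors $\hat\pi_j-\pi_j$).} The only change is the choice of contour. The circle $\partial\Omega_j$ encloses exactly $\lambda_j$ among the spectrum of $\Gamma$, and by Lemma~\ref{inclu} it encloses exactly $\hat\lambda_j$ among the spectrum of $\Gamma_n$ on $\mathcal J_\gamma(t)$. The holomorphic functional calculus then gives $\pi_j=\frac{1}{2i\pi}\int_{\partial\Omega_j}R(\zeta)d\zeta$ and $\hat\pi_j\mathbf 1_{\mathcal J_\gamma(t)}=\frac{1}{2i\pi}\int_{\partial\Omega_j}\widehat R(\zeta)d\zeta\,\mathbf 1_{\mathcal J_\gamma(t)}$, and the factorization of Step~1 yields the second identity.

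\textbf{Step 3 (pseudo-inverses $\Gamma^\dag-\Gamma_n^\dag$).} This step requires the holomorphic functional calculus applied not to $\mathbf 1$ but to $\zeta\mapsto 1/\zeta$, which is holomorphic in a neighborhood of $\mathrm{supp}(\gamma)$ because both contours in Figures~\ref{rect_contour} and~\ref{circles} stay strictly to the right of $0$. By residue calculus, with $\gamma$ enclosing $\lambda_1,\ldots,\lambda_{N_n}$ one gets
\[
\frac{1}{2i\pi}\int_\gamma\frac{R(\zeta)}{\zeta}d\zeta=\sum_{j=1}^{N_n}\frac{\pi_j}{\lambda_j}=\Gamma^\dag,
\]
using the definition (\ref{gammadag}); the analogous identity for $\Gamma_n^\dag$ holds on $\mathcal J_\gamma(t)$ by Lemma~\ref{inclu}. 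Subtracting and inserting the same factorization of $\widehat R(\zeta)-R(\zeta)$ delivers the third identity.

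\textbf{Main obstacle.} The substantive work has been done before the lemma: the inclusion $\mathcal J_\gamma(t)\subset\mathcal A_n$ (Lemma~\ref{inclu}) guarantees that $\gamma$ separates the relevant empirical eigenvalues from the rest of the spectrum of $\Gamma_n$, and the bound $t<1/2$ makes $I-\mathcal T(\zeta)$ invertible on the contour. What remains is just the algebraic identity $(I-\mathcal T)^{-1}-I=(I-\mathcal T)^{-1}\mathcal T$ together with the Dunford--Riesz calculus of \cite{dunford_schwartz}; no additional probabilistic argument is needed.
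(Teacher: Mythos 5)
Your proof is correct and follows essentially the same route as the paper: Dunford--Riesz contour representations of $\Pi_m$, $\hat\Pi_m$, $\pi_j$, $\hat\pi_j$, $\Gamma^\dag$, $\Gamma_n^\dag$ (the last via the residue computation for $\zeta\mapsto 1/\zeta$ with the contour excluding $0$), validity of the empirical representation on $\mathcal J_\gamma(t)$ via Lemma~\ref{inclu}, and the factorization $\widehat R(\zeta)-R(\zeta)=R^{1/2}(\zeta)(I-\mathcal T(\zeta))^{-1}\mathcal T(\zeta)R^{1/2}(\zeta)$ obtained from the invertibility of $I-\mathcal T(\zeta)$ when $t<1/2$. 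Your derivation of that factorization directly from $\widehat R=R^{1/2}(I-\mathcal T)^{-1}R^{1/2}$ is a slightly cleaner packaging of the paper's two-step manipulation through Equation~(\ref{diff1}), but the argument is the same (and, like the paper's own subsequent use in Lemma~\ref{deltanb_tech}, it yields $\Gamma_n^\dag-\Gamma^\dag$ rather than the sign printed in the lemma statement, which is immaterial for the norm bounds).
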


The last lemma allows us to finally control our quantities on $\mathcal J_\gamma(t)^\complement$.
\begin{lemma}\label{pbJn}
Denote by
\begin{equation*} 
\mathbf{a}_{k}:= \sum_{i\neq k}\frac
{\lambda_{i}}{\left\vert \lambda_{i}-\lambda_{k}\right\vert }+\frac
{\lambda_{k}}{\delta_{k}}, \text{ for all }k\geq 1.
\end{equation*} 
If $\gamma$ is the path $\partial\Omega_k$ covering the circle of center $\lambda_k$ and of radius $\delta_k$ or the rectangular contour given in Figure~\ref{rect_contour} we have, under Assumption~\textbf{H2} :%
\[
\mathbb{P}\left(  \mathcal J_{\gamma}^\complement(t) \right)
\leq2\exp\left(  -\frac{nt^2}{2\mathbf{a}_k^{2}(2b-1)}\frac
{1}{(2b-1) +256b^3/((2b-1)  \mathbf{a}_{k}t}%
\right).
\]
\end{lemma}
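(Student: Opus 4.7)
The plan is to express $\mathcal{T}(\zeta)$ as a centered empirical mean of i.i.d. self-adjoint operators and to apply a Hilbert-valued Bernstein inequality, then to pass from a pointwise deviation estimate to one uniform in $\zeta$. Write
\[
\mathcal{T}(\zeta) \;=\; \frac{1}{n}\sum_{i=1}^n \bigl(T_i(\zeta)-\mathbb{E}T_i(\zeta)\bigr), \qquad T_i(\zeta):=R^{1/2}(\zeta)(X_i\otimes X_i)R^{1/2}(\zeta),
\]
so that each $T_i(\zeta)$ is rank-one and positive, with operator norm
\[
\|T_i(\zeta)\|_\infty \;=\; \|R^{1/2}(\zeta)X_i\|^2 \;=\; \sum_{j\geq 1}\frac{\langle X_i,\psi_j\rangle^2}{|\zeta-\lambda_j|}.
\]

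The first substantive step is a purely deterministic geometric bound,
\[
\sup_{\zeta\in\operatorname{supp}(\gamma)}\;\sum_{j\geq 1}\frac{\lambda_j}{|\zeta-\lambda_j|} \;\leq\; C\,\mathbf{a}_k,
\]
for some numerical constant $C$. For $\gamma=\partial\Omega_k$ this follows from $|\zeta-\lambda_k|=\delta_k$ and, for $j\neq k$, $|\zeta-\lambda_j|\geq \tfrac{1}{2}|\lambda_j-\lambda_k|$ (which holds by the prescribed geometry of the disjoint disks of Figure~\ref{circles}); for the rectangular contour of Figure~\ref{rect_contour} the bound is read off directly from the distance of $\zeta$ to the spectrum. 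This is what injects $\mathbf{a}_k$ into the final constants.

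Next, combining this with Assumption~\textbf{H2}, I would derive Bernstein-type moment bounds for the centered operators $T_i(\zeta)-\mathbb{E}T_i(\zeta)$, i.e.\ find $v^2$ and $c$ such that
\[
\mathbb{E}\bigl[\|T_i(\zeta)-\mathbb{E}T_i(\zeta)\|_\infty^m\bigr] \;\leq\; \tfrac{m!}{2}\,v^2 c^{m-2}, \qquad m\geq 2.
\]
The argument is: by H2, the $l$-th moment of each $\langle X,\psi_j\rangle^2/|\zeta-\lambda_j|$ is at most $l!\,b^{l-1}(\lambda_j/|\zeta-\lambda_j|)^l$, then one combines these terms through a convexity/multinomial argument, bringing out $\sum_j \lambda_j/|\zeta-\lambda_j|\leq C\mathbf{a}_k$. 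This should produce $v^2$ of order $(2b-1)\mathbf{a}_k^2$ and $c$ of order $b\mathbf{a}_k$, consistent with the constants that appear in the announced bound.

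At this point the Hilbert-valued Bernstein inequality (Bosq~\cite{bosq_linear_2000}, the operator-valued analogue of Lemma~\ref{bernstein}) yields the announced exponential control of $\mathbb{P}(\|\mathcal{T}(\zeta)\|_\infty\geq t)$ for a fixed $\zeta$. The main obstacle, and the only genuinely non-routine piece, is lifting this pointwise bound to $\sup_{\zeta\in\operatorname{supp}(\gamma)}\|\mathcal{T}(\zeta)\|_\infty$. My plan here is a standard chaining/covering argument: the map $\zeta\mapsto R(\zeta)$ satisfies $R'(\zeta)=-R^2(\zeta)$ on the resolvent set, so $\|\mathcal{T}(\zeta)-\mathcal{T}(\zeta')\|_\infty$ is Lipschitz in $\zeta$ with a constant controlled by powers of $\min_\zeta \operatorname{dist}(\zeta,\operatorname{sp}(\Gamma))$, hence polynomial in $n$. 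Covering the one-dimensional contour $\gamma$ by a polynomial-size net and applying the union bound over the net absorbs only logarithmic losses, which are dominated by the exponential factor and leave the stated form of the bound unchanged.
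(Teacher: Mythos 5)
Your first two steps are essentially the right ones, and they match the argument the paper relies on (the paper does not prove this lemma itself; it defers to Lemma~13 of Mas and Ruymgaart~\cite{mas_high_2012}, whose proof is indeed a Hilbert-valued Bernstein inequality with moment constants built from Assumption~\textbf{H2} and the quantity $\sum_j \lambda_j/|\zeta-\lambda_j|\lesssim \mathbf a_k$). The gap is in your last step. A union bound over a net of the contour of cardinality $N$ produces a prefactor $N$ in front of the exponential, not a ``logarithmic loss''; since the distance from $\gamma$ to the spectrum is of order $\delta_k$ (hence $N$ must be polynomial in $n$ to control the Lipschitz constant of $\zeta\mapsto\mathcal T(\zeta)$, which moreover involves the random quantity $\|\Gamma_n-\Gamma\|_\infty$), you would obtain $2N\exp(-\cdot)$ with a degraded exponent, not the stated bound with prefactor exactly $2$ and the announced constants. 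For the paper's downstream use ($t=\mathbf a_j\ln n/\sqrt n$, so the exponent is of order $\ln^2 n$) this could be repaired, but it does not prove the lemma as stated, which is non-vacuous for all $t$.

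The chaining is also unnecessary: the supremum over $\zeta$ can be removed \emph{deterministically} before any probability enters. For every $\zeta\in\partial\Omega_k$ one has $|\zeta-\lambda_k|^{-1}=\delta_k^{-1}$ and $|\zeta-\lambda_j|^{-1}\le 2|\lambda_j-\lambda_k|^{-1}$ for $j\neq k$ (similarly for the rectangular contour), so if $D_k$ denotes the fixed diagonal operator with eigenvalues $(\lambda_j/d_j)^{1/2}$, $d_k=\delta_k$, $d_j=|\lambda_j-\lambda_k|/2$, then
\begin{equation*}
\sup_{\zeta\in\mathrm{supp}(\gamma)}\|\mathcal T(\zeta)\|_\infty\;\le\;\bigl\|D_k\,\Gamma^{-1/2}(\Gamma_n-\Gamma)\Gamma^{-1/2}D_k\bigr\|_{HS},
\end{equation*}
a single $\zeta$-free centred empirical mean of i.i.d.\ Hilbert--Schmidt operators with $\mathrm{tr}(D_k^2)\le 2\mathbf a_k$. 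One application of Bosq's Bernstein inequality to this object, with the moment bounds you already sketched from \textbf{H2}, gives the stated inequality with no covering argument and no prefactor inflation. I would replace your final paragraph by this domination step.
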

The proof of Lemma~\ref{pbJn} relies on Bernstein's exponential inequality for Hilbert-valued random variables (see for instance Bosq~\cite{bosq_linear_2000}). Details can be found in Mas and Ruymgaart~\cite[Lemma 13]{mas_high_2012}. 
\begin{definition}Let, for all $t>0$, $\mathcal J_\gamma (t)$ be the set defined by Equation~(\ref{defJn}) then we define a set $\mathcal J_n$ in the following way: take $l_{j,n}:=\min\left\{\frac{\mathbf{a}_{j}}{\sqrt n}\ln n,1/2\right\}$ ,
\begin{description} 
\item[Circular contour]
\begin{equation}\label{Jncirc}
\mathcal J_n:=\bigcap_{j=1}^m\mathcal{J}_{\partial\Omega_j}(l_{j,n});
\end{equation}
\item[Rectangular contour]
\begin{equation}\label{Jnrec}
\mathcal J_n:=\mathcal{J}_\gamma(l_{m,n}).
\end{equation}
\end{description}
\end{definition}
\hspace{1cm}
\begin{remark}\label{rem:defJn}By Lemma~\ref{pbJn}, the sets $\mathcal J_n$ defined by Equation~(\ref{Jncirc}) or~(\ref{Jnrec}) verifies
\begin{equation}\label{PbJn}\Pb( \mathcal J_n^\complement)\leq 2\exp(-c^*\ln^2 n),\end{equation}
where $c^*>0$ depends only on $b$ and $c_\delta$. Moreover, as $l_{j,n}<1/2$, by Lemma~\ref{inclu}, we have 
\[\mathcal J_n\subset \mathcal A_n\]
and the results of Lemma~\ref{diff_proj} are true. 
\end{remark}

\subsection{Upper-bound on the distance between empirical and theoretical projectors}
We need a preliminary lemma
\begin{lemma}[Hilgert \textit{et al.}~\cite{hilgert_minimax_2012}, Lemma 10.1]
\label{trick1}If Assumption~\textbf{H4} is verified, then for all $k\in\N^*$ :%
\begin{equation*}
\mathbf a_k\leq C(\gamma)k\ln k%
\end{equation*}

\end{lemma}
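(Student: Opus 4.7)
Set $s_j:=j\ln^{1+\gamma}(j)$, so that Assumption \textbf{H4} reads simply: the sequence $(s_j\lambda_j)_{j\geq 2}$ is decreasing. The starting observation is the pointwise comparison
\[
\frac{\lambda_i}{|\lambda_i-\lambda_k|}\ \leq\ \frac{s_k}{|s_i-s_k|},\qquad i,k\geq 2,\ i\neq k.
\]
Indeed, for $i<k$ one has $\lambda_i\geq \lambda_ks_k/s_i$, hence $\lambda_k/\lambda_i\leq s_i/s_k$ and $\lambda_i/(\lambda_i-\lambda_k)=1/(1-\lambda_k/\lambda_i)\leq s_k/(s_k-s_i)$; the case $i>k$ is symmetric. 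It is therefore enough to bound $\sum_{i\geq 2,\,i\neq k} s_k/|s_i-s_k|$ by $C(\gamma)k\ln k$ and to treat $i=1$ separately.

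The plan is to split the main sum into indices close to $k$ (say $k/2\leq i\leq 2k$) and indices far from $k$. On the close range, a mean-value estimate together with $s'(t)=\ln^{1+\gamma}(t)+(1+\gamma)\ln^\gamma(t)\asymp\ln^{1+\gamma}(k)$ for $t\in[k/2,2k]$ gives $|s_i-s_k|\gtrsim|i-k|\ln^{1+\gamma}(k)$, whence
\[
\sum_{\substack{k/2\leq i\leq 2k\\ i\neq k}}\frac{s_k}{|s_i-s_k|}\ \lesssim\ k\sum_{j=1}^{k}\frac{1}{j}\ \lesssim\ k\ln k.
\]
For the far range, one checks that $s_i\leq s_k/2$ for $i\leq k/2$ (so $|s_i-s_k|\geq s_k/2$, and the at most $k$ remaining terms contribute $O(k)$) and $s_i\geq 2s_k$ for $i\geq 2k$ (so $|s_i-s_k|\geq s_i/2$). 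The right tail is the delicate one and is controlled by integral comparison:
\[
\sum_{i\geq 2k}\frac{2s_k}{s_i}\ \leq\ 2s_k\int_{2k}^{\infty}\frac{dt}{t\ln^{1+\gamma}(t)}\ =\ \frac{2s_k}{\gamma\ln^{\gamma}(2k)}\ \lesssim\ \frac{k\ln^{1+\gamma}(k)}{\ln^{\gamma}(k)}\ =\ k\ln(k),
\]
which is precisely where the strict inequality $\gamma>0$ in \textbf{H4} is essential.

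The exceptional index $i=1$ contributes at most $1+\lambda_2/(\lambda_1-\lambda_2)$, a constant depending only on $\Gamma$. For the second summand in $\mathbf{a}_k$, recall from Figures~\ref{rect_contour}--\ref{circles} that $\delta_k$ is (up to a universal factor) the minimal spectral gap at $\lambda_k$; from $\lambda_{k+1}\leq \lambda_ks_k/s_{k+1}$ one obtains $\lambda_k-\lambda_{k+1}\geq \lambda_k(s_{k+1}-s_k)/s_{k+1}\gtrsim \lambda_k/k$, and similarly $\lambda_{k-1}-\lambda_k\gtrsim \lambda_k/k$, so that $\lambda_k/\delta_k\lesssim k$. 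Assembling the pieces yields the stated bound $\mathbf{a}_k\leq C(\gamma)k\ln k$. The main difficulty is the right-tail estimate $\sum_{i\geq 2k}s_k/(s_i-s_k)$: any naive bound of the form $\lesssim k/(i-k)$ gives a divergent series, and one is really forced to exploit the strict logarithmic excess $\ln^{1+\gamma}$ with $\gamma>0$ in \textbf{H4} to close the argument.
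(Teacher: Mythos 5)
The paper does not actually prove this lemma: it is imported verbatim from Hilgert \emph{et al.}, Lemma 10.1, with no argument given, so there is nothing internal to compare your proof against. That said, your self-contained derivation is correct and is essentially the natural (and, as far as one can tell, the standard) route: the substitution $s_j=j\ln^{1+\gamma}(j)$ turns \textbf{H4} into monotonicity of $(s_j\lambda_j)$, the pointwise comparison $\lambda_i/|\lambda_i-\lambda_k|\leq s_k/|s_i-s_k|$ is verified correctly in both directions (using that $x\mapsto x/(1-x)$ is increasing for the case $i>k$), the three-range split gives $O(k\ln k)+O(k)+O(k\ln k/\gamma)$ as claimed, and the gap estimate $\lambda_k-\lambda_{k\pm1}\gtrsim\lambda_k/k$ correctly handles the term $\lambda_k/\delta_k$ under the usual convention $\delta_k\asymp\min(\lambda_{k-1}-\lambda_k,\lambda_k-\lambda_{k+1})$ — a convention the paper never states explicitly but uses implicitly (e.g.\ $\delta_{N_n}\leq\lambda_{N_n}/4$ in Lemma~\ref{lem:hNn}). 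Two small caveats, neither fatal: the exceptional term $i=1$ contributes $1+\lambda_2/(\lambda_1-\lambda_2)$, so the constant really depends on $\Gamma$ through the first spectral gap and not only on $\gamma$ as the statement advertises (this imprecision is in the cited lemma, not in your argument); and the bound is vacuous at $k=1$ where $k\ln k=0$, again an artifact of the statement rather than of your proof. Your closing remark correctly identifies where $\gamma>0$ is indispensable: the right tail $\sum_{i\geq 2k}s_k/s_i$ converges precisely because of the integral $\int^\infty dt/(t\ln^{1+\gamma}t)<\infty$.
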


\begin{lemma}\label{normnGamma}Let $r,R>0$ and $\beta\in\mathcal{W}_r^R$. Suppose that  assumptions \textbf{H2}, \textbf{H3} and \textbf{H4} are fulfilled. If $(\lambda_j)_{j\geq 1}$ decreases at polynomial rate (\textbf{P}) then
\begin{eqnarray*}
\E[\|\hat\Pi_{m}\beta-\Pi_{m}\beta\|_\Gamma^2\mathbf 1_{{\mathcal J}_n}]&\leq &C_1\frac{\ln^3 m}{n}m^{\max\{(1-r)_+,2-a-r\}}+C_2\frac{\ln^5 m\ln^4 n}{n^2}m^{\max\{(2-a+(7-r)_+)_+,2-a+(5-r)_+\}},
\end{eqnarray*}
and if $(\lambda_j)_{j\geq 1}$ decreases at exponential rate (\textbf{E})
\[\E[\|\hat\Pi_{m}\beta-\Pi_{m}\beta\|_\Gamma^2\mathbf 1_{{\mathcal J}_n}]\leq C_2\frac{\ln^3 m}{n}m^{(1-r)_+}+\frac{\ln^6 m \ln^4 n}{n^2},\]
with $C_1>0$, $C_2>0$ and $C_3>0$ depending on $r$, $R$ and on the sequence $(\lambda_j)_{j\geq 1}$ but are independent of $m$ and $n$.

\end{lemma}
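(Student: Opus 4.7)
The plan is to exploit the contour-integral representation of Lemma~\ref{diff_proj} to linearise $\hat\Pi_m-\Pi_m$ in the empirical perturbation $\Gamma_n-\Gamma$, compute the leading term explicitly by residues, and control the quadratic remainder pathwise on $\mathcal{J}_n$. Concretely, applying Lemma~\ref{diff_proj} with $\gamma$ the circular contour of Figure~\ref{circles} enclosing $\lambda_1,\dots,\lambda_m$ and splitting $[I-\mathcal{T}(\zeta)]^{-1}\mathcal{T}(\zeta)=\mathcal{T}(\zeta)+[I-\mathcal{T}(\zeta)]^{-1}\mathcal{T}(\zeta)^2$, together with the identity $R^{1/2}\mathcal{T}R^{1/2}=R(\Gamma_n-\Gamma)R$, produces the decomposition $(\hat\Pi_m-\Pi_m)\beta\,\mathbf{1}_{\mathcal{J}_n}=A\beta+B\beta$ where $A:=\tfrac{1}{2i\pi}\int_\gamma R(\zeta)(\Gamma_n-\Gamma)R(\zeta)\,d\zeta$ is linear in $\Gamma_n-\Gamma$ and should produce the $1/n$ rate, while the quadratic remainder $B$ should give the $1/n^2$ rate.

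For the first-order piece I would expand $R(\zeta)=\sum_k(\zeta-\lambda_k)^{-1}\pi_k$ and evaluate the scalar contour integrals by partial fractions: pairs $(k,l)$ with both indices inside $\gamma$ or both outside contribute zero, and the diagonal $k=l$ is a double pole with no residue, so only mixed pairs survive, giving
\[
A\beta=\sum_{k\leq m<l}\frac{U_{kl}\langle\beta,\psi_l\rangle}{\lambda_k-\lambda_l}\,\psi_k+\sum_{l\leq m<k}\frac{U_{kl}\langle\beta,\psi_l\rangle}{\lambda_l-\lambda_k}\,\psi_k,\qquad U_{kl}:=\langle(\Gamma_n-\Gamma)\psi_l,\psi_k\rangle.
\]
Using $\Gamma^{1/2}\psi_k=\sqrt{\lambda_k}\psi_k$, orthonormality of the $\psi_k$, and Assumption~\textbf{H3}, which makes the $(U_{kl})_{k\neq l}$ pairwise uncorrelated with $\E[U_{kl}^2]=\lambda_k\lambda_l/n$, I would obtain
\[
\E[\|\Gamma^{1/2}A\beta\|^2]=\frac1n\sum_{k\leq m<l}\frac{\lambda_k^2\lambda_l\langle\beta,\psi_l\rangle^2}{(\lambda_k-\lambda_l)^2}+\frac1n\sum_{l\leq m<k}\frac{\lambda_k^2\lambda_l\langle\beta,\psi_l\rangle^2}{(\lambda_k-\lambda_l)^2}.
\]
Each double sum is then split into a bulk regime (indices well-separated, so $\lambda_k^2/(\lambda_k-\lambda_l)^2\lesssim 1$) and a near-boundary regime absorbed through the spacing control provided by Assumption~\textbf{H4} and Lemma~\ref{trick1}. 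Inserting the eigenvalue decay \textbf{(P)} or \textbf{(E)} and the ellipsoid constraint $\langle\beta,\psi_l\rangle^2\leq R^2l^{-r}$ then produces the leading $C\ln^3(m)\,m^{\max\{(1-r)_+,2-a-r\}}/n$ in case \textbf{(P)} (resp.\ $C\ln^3(m)\,m^{(1-r)_+}/n$ in case \textbf{(E)}), the $\ln^3 m$ factor stemming from $\mathbf{a}_k^2\lesssim k^2\ln^2 k$ combined with a single logarithmic summation.

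For the remainder $B$ I would work pathwise on $\mathcal{J}_n$: on each circle $\partial\Omega_j$ one has $\|\mathcal{T}(\zeta)\|_\infty\leq l_{j,n}=\min\{\mathbf{a}_j\ln n/\sqrt{n},1/2\}$, hence $\|[I-\mathcal{T}(\zeta)]^{-1}\|_\infty\leq 2$, while $\|R^{1/2}(\zeta)\|_\infty\asymp\delta_j^{-1/2}$. Bounding the operator norm of the integrand, multiplying by the contour length $\asymp\delta_j$, summing over $j\leq m$, substituting $\mathbf{a}_j\lesssim j\ln j$ and finally using $\|\Gamma^{1/2}f\|^2\leq\lambda_1\|f\|^2$ to convert the resulting operator bound into the prediction seminorm yields the stated $\ln^5(m)\ln^4(n)/n^2$ (or $\ln^6(m)\ln^4(n)/n^2$ in the exponential case) contribution with the announced power of $m$.

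The hard part is the near-boundary regime of the first-order analysis: when $k$ is just below $m$ and $l$ just above, $(\lambda_k-\lambda_l)^2$ can be as small as $\delta_m^2\asymp m^{-2(a+1)}$ under \textbf{(P)}, so the numerator $\lambda_k^2\lambda_l\langle\beta,\psi_l\rangle^2$ must be balanced carefully against this shrinking denominator. It is precisely here that Assumption~\textbf{H4} and the sharp bound $\mathbf{a}_k\leq C(\gamma)k\ln k$ from Lemma~\ref{trick1} are decisive, and the competition between the near-boundary and bulk contributions is what produces the $\max\{(1-r)_+,2-a-r\}$ exponent in the polynomial bound, the two alternatives being favoured according to whether the regularity $r$ of $\beta$ is strong enough to tame the boundary pile-up.
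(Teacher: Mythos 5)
Your treatment of the linear term is essentially the paper's own: the same contour representation from Lemma~\ref{diff_proj}, the same splitting $[I-\mathcal{T}]^{-1}\mathcal{T}=\mathcal{T}+[I-\mathcal{T}]^{-1}\mathcal{T}^{2}$, the same residue computation leaving only the mixed pairs $(k\leq m<l)$ and $(l\leq m<k)$, and the same use of \textbf{H3} to get $\E[U_{kl}^{2}]=\lambda_k\lambda_l/n$ with uncorrelated cross terms. Your two double sums coincide, after relabelling, with the display the paper obtains for $\mathbb{E}\|\Gamma^{1/2}A_{n}\beta\|^{2}$, and the passage to $C\ln^{3}(m)\,m^{\max\{(1-r)_+,2-a-r\}}/n$ via $\mathbf{a}_k\lesssim k\ln k$ (Lemma~\ref{trick1}) and the ellipsoid constraint is the intended argument.

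The gap is in the remainder term $B$. Bounding the integrand in operator norm by $\|R^{1/2}(\zeta)\|_\infty^{2}\cdot 2\cdot\|\mathcal{T}(\zeta)\|_\infty^{2}\asymp\delta_j^{-1}\mathbf{a}_j^{2}\ln^{2}(n)/n$ on $\partial\Omega_j$, multiplying by the contour length $\asymp\delta_j$ and summing over $j\leq m$ gives $\|B\beta\|\lesssim n^{-1}\ln^{2}(n)\sum_{j\leq m}\mathbf{a}_j^{2}\,\|\beta\|\lesssim n^{-1}\ln^{2}(n)\,m^{3}\ln^{2}(m)\,\|\beta\|$, hence, after $\|\Gamma^{1/2}f\|^{2}\leq\lambda_1\|f\|^{2}$, a bound of order $m^{6}\ln^{4}(m)\ln^{4}(n)/n^{2}$. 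This does not yield the announced exponent: the claimed power $\max\{(2-a+(7-r)_+)_+,\,2-a+(5-r)_+\}$ in case \textbf{(P)}, and the complete absence of a power of $m$ in case \textbf{(E)}, depend on the regularity $r$ of $\beta$ and on the eigenvalue decay, and can be far below $6$ (take $r\geq 5$, or the exponential case). A bound that sees $\beta$ only through $\|\beta\|$ cannot produce an $r$-dependent exponent. The paper avoids this in two ways: it writes $\beta=P_r\beta^{\diamond}$ with $P_r\psi_j=j^{-r/2}\psi_j$ and uses $\|R^{1/2}(\zeta)P_r\|_\infty=j^{-r/2}/\sqrt{\delta_j}$ on $\partial\Omega_j$, which injects the decaying factor $j^{-r/2}$ into the sum over $j$; and it does not collapse $\Gamma^{1/2}$ to $\lambda_1^{1/2}$ but keeps the weighted structure $\sum_{k\geq 1}\lambda_k\langle\cdot,\psi_k\rangle^{2}$ together with the kernel $\int_{\partial\Omega_j}|d\zeta|/\sqrt{|\zeta-\lambda_k|}$. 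Both refinements are needed to reach the stated exponents; with the crude pathwise bound you propose, the lemma --- and hence the rate $n^{-(a+r)/(a+r+1)}$ for the whole range $a+r>2$ --- would only follow for $a+r$ sufficiently large.
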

\begin{proof}
First, by Equation~(\ref{PbJn})
\[\E[\|\hat\Pi_{m}\beta-\Pi_{m}\beta\|_\Gamma^2\mathbf 1_{{\mathcal J}_n^\complement}]\leq 2\|\beta\|_\Gamma^2\Pb({\mathcal J}_n^\complement)\leq C\|\beta\|_\Gamma^2/n^2,\]
with $C$ independent of $\beta$, $n$ and $m$. Now by Lemma \ref{diff_proj},
\begin{eqnarray*}
	(\hat\Pi_{m}\beta-\Pi_{m}\beta)\mathbf 1_{\mathcal J_n}=\mathbf{1}_{\mathcal J_n}\frac{1}{2i\pi}\sum_{j=1}^m\int_{\partial\Omega_{j}}R^{1/2}\left(  z\right)
\left[  I-\mathcal{T}\left(  z\right)  \right]  ^{-1}\widetilde{\Pi
}\left(  z\right)  R^{1/2}\left(  z\right)\beta  dz.
\end{eqnarray*}
Remark that $(I-\mathcal{T}(z))^{-1}\mathcal{T}(z)=\mathcal{T}(z)+(I-\mathcal{T}(z))^{-1}\mathcal{T}^2(z)$, then
\[\mathbf{1}_{\mathcal J_n}\left(\widehat{\Pi}_{m}-\Pi_{m}\right)   =A_{n}+B_{n},\]
with
\begin{align*}
A_{n}  &  =\mathbf{1}_{\mathcal J_n}\dfrac{1}{2\pi i}\sum_{j=1}^{m}\int_{\partial\Omega_{j}}\left[
\left(  zI-\Gamma\right)  ^{-1}\left(  \Gamma_{n}-\Gamma\right)  \left(
zI-\Gamma\right)  ^{-1}\right]  dz\\
B_{n}  &  =\mathbf{1}_{\mathcal J_n}\dfrac{1}{2\pi i}\sum_{j=1}^{m}\int_{\partial\Omega_{j}}\left[
\left(  zI-\Gamma\right)  ^{-1/2}\left[  I-\mathcal{T}\left(  z\right)  \right]  ^{-1}\mathcal T\left( z\right)^2 \left(  zI-\Gamma\right)  ^{-1/2}\right]  dz.
\end{align*}
We deal with $A_{n}$ first. Calculations show that%
\begin{align}
\mathbb{E}\left\Vert \Gamma^{1/2}A_{n}\beta\right\Vert ^{2}  &  \leq\frac
{1}{n}\sum_{j=1}^{m}\beta_{j}^{2}\sum_{k>m}\frac{\lambda_{j}\lambda
_{k}^{2}}{\left(  \lambda_{j}-\lambda_{k}\right)  ^{2}}+\frac{1}{n}\sum
_{j=1}^{m}\lambda_{j}^{2}\sum_{k>m}\frac{\lambda_{k}\beta_{k}^{2}}{\left(
\lambda_{j}-\lambda_{k}\right)  ^{2}}\nonumber\\
&  \leq\frac{1}{n}\sum_{j=1}^{m}\lambda_{j}\beta_{j}^{2}\mathbf{a}_j^2+\frac{1}{n}\sum_{j=1}^{m}\frac{\lambda_{j}^{2}}{(\lambda_j-\lambda_{m+1})^2}\lambda_{m+1}m^{-r}\sum_{k>m}k^r\beta_k^2\nonumber\\
&\label{Anb}\leq C\left( \frac{m^{(1-r)_+}\ln^3 m}{n}+\frac{m^{2-r}\ln^2m}{n}\lambda_{m+1}\right),
\end{align}
as soon as $(\lambda_j)_{j\geq 1}$ decreases exponentially or polynomially with $C>0$ depending only on $R$, $r$, $a$ and $\Gamma$. The last line comes from the inequality $\lambda_j\lesssim j^{-1}$ and Lemma~\ref{trick1}.
We turn now to $B_n$
\begin{eqnarray}
\|B_n\beta\mathbf 1_{\mathcal J_n}\|_\Gamma^2&&\nonumber\\
&&\label{diffproj}\hspace{-3cm}\leq \mathbf{1}_{\mathcal J_n}\frac{1}{4\pi^2}\sum_{k\geq1}\lambda_k\left(\sum_{j=1}^m\int_{\partial\Omega_{j}}<R^{1/2}\left(  \zeta\right)
\left[  I-\mathcal{T}\left(  \zeta\right)  \right]  ^{-1}\mathcal T\left(  \zeta\right)^2  R^{1/2}\left(  \zeta\right)\beta,\psi_k>d\zeta\right)^2.
\end{eqnarray}
We have:
\begin{eqnarray*}
<R^{1/2}\left(  \zeta\right)
\left[  I-\mathcal{T}\left(  \zeta\right)  \right]  ^{-1}\mathcal T\left(  \zeta\right)^2  R^{1/2}\left(  \zeta\right)\beta,\psi_k>=<
\left[  I-\mathcal{T}\left(  \zeta\right)  \right]  ^{-1}\mathcal T\left(  \zeta\right)^2  R^{1/2}\left(  \zeta\right)\beta,R^{1/2}\left(  \zeta\right)\psi_k>.
\end{eqnarray*}
We denote by $\beta^\diamond:=\sum_{j\geq 1}j^{r/2}\beta_j\psi_j$ ; as $\beta$ is in $\mathcal W_r^R$, the function $\beta^\diamond$ is in $\mathbb L^2([0,1])$. Moreover we denote by $P_r$ the diagonal compact operator defined by $P_r\psi_j=j^{-r/2}\psi_j$, we remark that $\beta=P_r\beta^\diamond$. We have
\[R^{1/2}(\zeta)\psi_k=(\zeta I-\Gamma)^{-1/2}\psi_k=\frac{1}{\sqrt{\zeta-\lambda_k}}\psi_k.\]

Then,
\begin{eqnarray*}
\vert<R^{1/2}\left(  \zeta\right)
\left[  I-\mathcal{T}\left(  \zeta\right)  \right]  ^{-1}\widetilde{\Pi
}\left(  \zeta\right)^2  R^{1/2}\left(  \zeta\right)\beta,\psi_k>\vert
\leq\frac{1}{\sqrt{|\zeta-\lambda_k|}}\left\|(I-\mathcal{T}(\zeta))^{-1}\right\|_\infty\left\|\mathcal{T}(\zeta)\right\|_\infty^2\left\|R^{1/2}(\zeta)P_r\right\|_\infty\left\|\beta^\diamond\right\|.
\end{eqnarray*}
Now on the set $\mathcal{J}_n$, by definition, we have for all $\zeta\in\Omega_j$:
\[\left\|(I-\mathcal{T}(\zeta))^{-1}\right\|_\infty<2\text{ and }\left\|\mathcal{T}(\zeta)\right\|_\infty< \frac{\mathbf{a}_j}{\sqrt{n}}\ln n.\]
Moreover, the eigenvalues of the operator $R^{1/2}(\zeta)P_r$ are $\{k^{-r/2}(\zeta-\lambda_k)^{1/2}, k\geq 1\}$ then, for all, $\zeta\in\partial\Omega_j$ : 
\begin{equation}\label{R12Pr}\left\|R^{1/2}(\zeta)P_r\right\|_\infty=\sup_{k\geq 1}\{k^{-r/2}|\zeta-\lambda_k|^{-1/2}\}= j^{-r/2}/\sqrt{\delta_j},\end{equation}
and Equation~(\ref{diffproj}) becomes: 
\begin{eqnarray*}
\|B_n\beta\mathbf 1_{\mathcal J_n}\|_\Gamma^2&\leq& \mathbf{1}_{\mathcal J_n}\frac{1}{\pi^2}\|\beta^\diamond\|^2\sum_{k\geq1}\lambda_k\left(\sum_{j=1}^m\frac{\mathbf{a}_j^2}{n}\ln^2 n \frac{j^{-r/2}}{\sqrt{\delta_j}}\int_{\partial\Omega_{j}}\frac{dz}{\sqrt{|z-\lambda_k|}}\right)^2.\\
\end{eqnarray*}

In the polynomial case \textbf{(P)}, calculations lead to the following bound 
 \begin{equation*}\|B_n\beta\mathbf 1_{\mathcal J_n}\|_\Gamma^2\leq C \frac{\ln^5 m\ln^4 n}{n^2}m^{\max\{(2-a+(7-r)_+)_+,2-a+(5-r)_+\}},\end{equation*}
 with $a>1$ such that $\lambda_j\asymp j^{-a}$ and $C>0$ depends only on $R$, $r$, $a$ and $\Gamma$. Gathering with~(\ref{Anb}) we obtain the expected result.
 
In the exponential case \textbf{(E)} we have
 \begin{equation*}\|B_n\beta\mathbf 1_{\mathcal J_n}\|_\Gamma^2\leq C' \frac{\ln^6 m \ln^4 n}{n^2},\end{equation*}
 with $C$ depending only on $R$, $r$, $a$ and $\Gamma$.

\end{proof}

 \subsection{Empirical and theoretical bias terms}
\begin{lemma}\label{control_biais}Suppose that assumptions~\textbf{H2}, \textbf{H3} and \textbf{H4} are fulfilled and that $\beta\in\mathcal W_r^R$ with $r,R>0$ such that, in the polynomial case \textbf{(P)}, $a+r>2$. Then for all $m=1,...,N_n$:
\begin{equation}\label{bound}\E[\|\beta-\hat\Pi_m\beta\|_n^2]\leq 4\E[\|\beta-\hat\Pi_m\beta\|_\Gamma^2]+\tau_{m, n},\end{equation}
where $\tau_{m, n}\leq m\varepsilon(m)/n$ where $\varepsilon(m)\rightarrow 0$ when $m\rightarrow+\infty$ and is independent of $\beta$ and $m$.
\end{lemma}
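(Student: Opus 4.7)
The strategy is to route the comparison through the deterministic projection $\Pi_m\beta$, thereby isolating the purely perturbative discrepancy $(\hat\Pi_m-\Pi_m)\beta$ and invoking Lemma~\ref{normnGamma} on that piece. Writing $\beta-\hat\Pi_m\beta=(\beta-\Pi_m\beta)-(\hat\Pi_m-\Pi_m)\beta$ and applying $(u+v)^2\leq 2u^2+2v^2$ in the $\Gamma_n$-seminorm yields
\[
\|\beta-\hat\Pi_m\beta\|_{\Gamma_n}^2\leq 2\|\beta-\Pi_m\beta\|_{\Gamma_n}^2+2\|(\hat\Pi_m-\Pi_m)\beta\|_{\Gamma_n}^2.
\]
Since $\beta-\Pi_m\beta$ is deterministic, $\E\|\beta-\Pi_m\beta\|_{\Gamma_n}^2=\|\beta-\Pi_m\beta\|_\Gamma^2$. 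The reverse inequality in the $\Gamma$-norm, $\|\beta-\Pi_m\beta\|_\Gamma^2\leq 2\|\beta-\hat\Pi_m\beta\|_\Gamma^2+2\|(\hat\Pi_m-\Pi_m)\beta\|_\Gamma^2$, then gives
\[
\E\|\beta-\hat\Pi_m\beta\|_{\Gamma_n}^2\leq 4\,\E\|\beta-\hat\Pi_m\beta\|_\Gamma^2+\underbrace{4\,\E\|(\hat\Pi_m-\Pi_m)\beta\|_\Gamma^2+2\,\E\|(\hat\Pi_m-\Pi_m)\beta\|_{\Gamma_n}^2}_{=:\,\tau_{m,n}}.
\]

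The first remainder is exactly what Lemma~\ref{normnGamma} controls. Splitting the expectation along $\mathcal J_n$ and its complement and using $\Pb(\mathcal J_n^\complement)\leq 2\exp(-c^*\ln^2 n)$ together with the trivial bound $\|(\hat\Pi_m-\Pi_m)\beta\|_\Gamma^2\leq 4\|\beta\|_\Gamma^2$, the dominant contribution is of order $\ln^3 m\cdot m^{(1-r)_+}/n$ in case \textbf{(P)} (under $a+r>2$) and of the same flavor in case \textbf{(E)}. Since $r>0$, this quantity is $(m/n)\varepsilon_1(m)$ with $\varepsilon_1(m)=C\ln^3 m \cdot m^{(1-r)_+-1}\to 0$ as $m\to\infty$, which fits the required form.

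For the second remainder I plan to re-run the perturbation-theoretic arguments of Lemma~\ref{normnGamma} with the $\Gamma_n$-seminorm replacing the $\Gamma$-norm. On $\mathcal J_n$, Lemma~\ref{diff_proj} yields the Dunford–Schwartz contour representation $(\hat\Pi_m-\Pi_m)\mathbf 1_{\mathcal J_n}=A_n+B_n$, whose action on $\beta$ was already bounded term by term. Writing $\|f\|_{\Gamma_n}^2=\|f\|_\Gamma^2+\langle(\Gamma_n-\Gamma)f,f\rangle$ with $f=(\hat\Pi_m-\Pi_m)\beta$, the first summand is dominated by the bound of Lemma~\ref{normnGamma}, while the second is bounded in absolute value by $\|\Gamma_n-\Gamma\|_\infty\cdot\|(\hat\Pi_m-\Pi_m)\beta\|^2$. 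On $\mathcal J_n$ the supremum resolvent-perturbation $\|\mathcal T(\zeta)\|_\infty$ is at most $\mathbf a_m\ln n/\sqrt n$, so both $\|\Gamma_n-\Gamma\|_\infty$ and $\|(\hat\Pi_m-\Pi_m)\beta\|^2$ are controlled at the desired rate; Cauchy–Schwarz, combined with the moment bound on $\|\Gamma_n-\Gamma\|_{HS}^2=O(1/n)$ deriving from \textbf{H2}, then produces a negligible correction of strictly lower order than $m\varepsilon(m)/n$. Off $\mathcal J_n$, the crude bound $\|(\hat\Pi_m-\Pi_m)\beta\|_{\Gamma_n}^2\leq\hat\lambda_1\cdot 4\|\beta\|^2$ combined with $\Pb(\mathcal J_n^\complement)\leq 2\exp(-c^*\ln^2 n)$ yields a contribution that is exponentially small in $n$.

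The main technical obstacle is precisely the second remainder: unlike in the proof of Lemma~\ref{normnGamma}, the norm itself is data-dependent, so evaluating $\|A_n\beta\|_{\Gamma_n}^2$ or $\|B_n\beta\|_{\Gamma_n}^2$ produces additional products of random quantities that must be disentangled. The cleanest way to do this is to bound $\|\cdot\|_{\Gamma_n}$ by $\|\cdot\|_\Gamma$ up to a correction involving $\|\Gamma_n-\Gamma\|_\infty$, which is acceptable provided one can show that this correction is $o(m/n)$—this is where the assumption $r>0$ (together with $a+r>2$ in the polynomial case) becomes essential, as it ensures that $\ln^3 m\cdot m^{(1-r)_+-1}\to 0$ and the extra perturbative terms are genuinely absorbed into $\varepsilon(m)$.
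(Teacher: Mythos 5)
Your opening decomposition is exactly the paper's: route through $\Pi_m\beta$, use $\E\|\beta-\Pi_m\beta\|_{\Gamma_n}^2=\|\beta-\Pi_m\beta\|_\Gamma^2$, and reduce everything to showing that $\E\|(\hat\Pi_m-\Pi_m)\beta\|_\Gamma^2$ and $\E\|(\hat\Pi_m-\Pi_m)\beta\|_{\Gamma_n}^2$ are both $o(m/n)$. The gap is in how you handle the first of these. You invoke Lemma~\ref{normnGamma} and assert that its bound is dominated by $C\ln^3 m\, m^{(1-r)_+}/n$; this is false in general. The second term of that bound, $C_2\frac{\ln^5 m\ln^4 n}{n^2}m^{\max\{(2-a+(7-r)_+)_+,\,2-a+(5-r)_+\}}$, is not $o(m/n)$ for all admissible parameters: take $a=2$, $r=1$ (so $a+r=3>2$), where the exponent equals $6$; since the constraint $m\leq N_n\leq 20\sqrt{n/\ln^3 n}$ only forces $n\gtrsim m^2\ln^3 m$, the resulting $\varepsilon(m)$ would have to exceed a quantity of order $\ln^6 m\cdot m^{3}$, which diverges. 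So the route through Lemma~\ref{normnGamma} cannot deliver $\tau_{m,n}\leq m\varepsilon(m)/n$ with $\varepsilon(m)\to 0$ independent of $n$. The paper sidesteps this entirely by using a sharper, purpose-built bound (Proposition 20 of Crambes and Mas), namely $\E\|(\hat\Pi_m-\Pi_m)\beta\|_\Gamma^2\leq A\,m^2\lambda_m/n$, which is $(m/n)\cdot m\lambda_m$ with $m\lambda_m\to 0$ under either \textbf{(P)} (since $a>1$) or \textbf{(E)}. Without that ingredient, or an equivalent, your proof does not close.

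Your treatment of the second remainder starts from the same identity the paper uses, $\|f\|_{\Gamma_n}^2=\|f\|_\Gamma^2+\langle(\Gamma_n-\Gamma)f,f\rangle$ with $f=(\hat\Pi_m-\Pi_m)\beta$, but then diverges: you split off $\|\Gamma_n-\Gamma\|_\infty\cdot\|(\hat\Pi_m-\Pi_m)\beta\|^2$, which after Cauchy--Schwarz requires a fourth-moment bound on $\|(\hat\Pi_m-\Pi_m)\beta\|$ in the \emph{unweighted} norm that is established nowhere and involves the larger quantities $\sum_j\beta_j^2\mathbf{a}_j^2/n$ rather than their $\Gamma$-weighted analogues. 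The paper instead bounds $|\langle(\Gamma_n-\Gamma)f,f\rangle|\leq 2\|\beta\|\,\|(\Gamma_n-\Gamma)(\hat\Pi_m-\Pi_m)\beta\|$ and keeps the product $(\Gamma_n-\Gamma)(\hat\Pi_m-\Pi_m)$ together inside the contour integral of Lemma~\ref{diff_proj}, extracting two factors of order $n^{-1/2}$ from $\E\|(\Gamma_n-\Gamma)R^{1/2}(\zeta)\|_{HS}^2\lesssim\mathbf{a}_j/n$ and $\|\mathcal T(\zeta)\|_\infty\lesssim\mathbf{a}_j\ln n/\sqrt n$; the resulting bound $\frac{\ln n}{n}\|\beta^\diamond\|\,m^{2-(r+a)/2}\ln^{5/2}m$ is where the hypothesis $a+r>2$ is genuinely used. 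Your instinct to exploit the product of two small factors is correct, but as written this step is not a proof, and combined with the failure of the first remainder the argument has a real hole.
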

\begin{proof}
First imagine that the random projectors in the equation above are replaced by
non random one. It is elementary to see that%
\[
\mathbb{E}\left[\left\Vert \beta-\Pi_{m}\beta\right\Vert _{n}^{2}\right]=\left\Vert
\beta-\Pi_{m}\beta\right\Vert _{\Gamma}^{2}%
\]
and that consequenlty to get (\ref{bound}) it is enough to show that both
$\mathbb{E}\left[\left\Vert \left(  \Pi_{m}-\widehat{\Pi}_{m}\right)  \beta
\right\Vert _{\Gamma}^{2}\right]$ and \\$\mathbb{E}\left[\left\Vert \left(  \Pi_{m}%
-\widehat{\Pi}_{m}\right)  \beta\right\Vert _{n}^{2}\right]$ are bounded by
$\tau_{m, n}$. The first bound was proved asymptotically in Cardot~\textit{et al.}~\cite{cardot_clt_2007} and non-asymptotically in the Proposition 20 of Crambes and Mas~\cite{crambes_optimal_2012} in a slightly more
general framework. Specifically these authors get%
\[
\mathbb{E}\left[\left\Vert \left(  \Pi_{m}-\widehat{\Pi}_{m}\right)  \beta
\right\Vert _{\Gamma}^{2}\right]\leq A\frac{m^{2}\lambda_{m}}{n}%
\]
where $A$ does not depend on $m$ and $n$. The only point left is to prove the
same sort of bound for 
\[\mathbb{E}\left[\left\Vert \left(  \Pi_{m}-\widehat{\Pi}%
_{m}\right)  \beta\right\Vert _{n}^{2}\right].\]
 The derivation makes use of
perturbation methods already  used in other parts of proofs. We will skip
technical details to concentrate on the essential facts.

In a first step remark that
\[\left\Vert \left(  \Pi_{m}-\widehat{\Pi}_{m}\right)
\beta\right\Vert _{n}^{2}=\left\langle \left(  \Gamma_{n}-\Gamma\right)
\left(  \Pi_{m}-\widehat{\Pi}_{m}\right)  \beta,\left(  \Pi_{m}-\widehat{\Pi
}_{m}\right)  \beta\right\rangle +\left\Vert \left(  \Pi_{m}-\widehat{\Pi}%
_{m}\right)  \beta\right\Vert _{\Gamma}^{2}\]
 and it is enough to focus on the
first term and to prove the bound for%
\[
\mathbb{E}\left[\left\Vert \left(  \Gamma_{n}-\Gamma\right)  \left(  \Pi
_{m}-\widehat{\Pi}_{m}\right)  \beta\right\Vert\right].
\]
after a Cauchy-Schwartz's Inequality coupled with the fact that $\left\Vert
\left(  \Pi_{m}-\widehat{\Pi}_{m}\right)  \beta\right\Vert \leq2\left\Vert
\beta\right\Vert .$ 

Now by Lemma~\ref{diff_proj}
\begin{equation*}
\left(  \Gamma_{n}-\Gamma\right)  \left(  \Pi_{m}-\widehat{\Pi}_{m}\right)
\beta \mathbf 1_{\mathcal J_n} =\frac{1}{2\pi i}\hspace{-1mm}\sum_{j=1}^{m}\hspace{-1mm}\int_{\partial\Omega_{j}}\hspace{-5mm}\left(
\Gamma_{n}-\Gamma\right)\hspace{-1mm}  R^{1/2}\left(  \zeta\right)\hspace{-1mm}  \left[  I-\mathcal T\left(  \zeta\right)  \right]  ^{-1}\hspace{-1mm}\mathcal{T}\left(  \zeta\right)
\hspace{-1mm}R^{1/2}\left(  \zeta\right) \hspace{-0.8mm} \beta d\zeta\mathbf 1_{\mathcal J_n}.
\end{equation*}
Hence by definition of $\mathcal{J}_n$, $\|(I-\mathcal{T}(\zeta))^{-1}\|_\infty\mathbf 1_{\mathcal J_n}\leq 2$ and $\|\mathcal{T}(\zeta)\|_\infty\mathbf 1_{\mathcal J_n}\leq \frac{\mathbf{a}_j\ln n}{n}$ then
\begin{align}
\mathbb{E}\left[\left\Vert \left(  \Gamma_{n}-\Gamma\right)  \left(  \Pi
_{m}-\widehat{\Pi}_{m}\right)  \beta\right\Vert\mathbf 1_{\mathcal J_n} \right]
& \leq \frac{1}{\pi}\sum_{j=1}^{m}%
\int_{\partial\Omega_{j}}\mathbb{E}\left[  \left\Vert \left(  \Gamma
_{n}-\Gamma\right)  R^{1/2}\left(  \zeta\right)  \right\Vert _{\infty
}\left\Vert \mathcal{T}\left(  \zeta\right)  \right\Vert _{\infty}\mathbf 1_{\mathcal J_n}\right]
\left\Vert R^{1/2}\left(  \zeta\right)  \beta\right\Vert d\zeta\nonumber\\
&\leq\frac{\ln n}{\pi\sqrt{n}}\sum_{j=1}^{m}\textbf{a}_{j}\int_{\partial\Omega_{j}%
}\left\Vert R^{1/2}\left(  \zeta\right)  P_{r}\right\Vert _{\infty}\left\Vert
\beta^{\Diamond}\right\Vert \mathbb{E}\left[\left\Vert \left(  \Gamma_{n}%
-\Gamma\right)  R^{1/2}\left(  \zeta\right)  \right\Vert _{\infty}\mathbf 1_{\mathcal J_n}\right]d\zeta\nonumber\\
& \leq\frac{\ln n}{\pi\sqrt{n}}\left\Vert \beta^{\Diamond}\right\Vert
\sum_{j=1}^{m}\textbf{a}_{j}\frac{j^{-r/2}}{\sqrt{\delta_{j}}}\int_{\partial\Omega_{j}%
}\sqrt{\mathbb{E}\left[\left\Vert \left(  \Gamma_{n}-\Gamma\right)  R^{1/2}\left(
\zeta\right)  \right\Vert _{HS}^{2}\mathbf 1_{\mathcal J_n}\right]}d\zeta\label{control_lemme12}
\end{align}
where we recall that, by Equation~(\ref{R12Pr}), \[\left\Vert R^{1/2}\left(  \zeta\right)  P_{r}\right\Vert _{\infty}\leq \frac{j^{-r/2}}{\sqrt{\delta_{j}}}\]
 (the definitions of $\beta^\diamond$ and $P_r$ are given in the proof of Lemma~\ref{normnGamma}).

Treating $\mathbb{E}\left[\left\Vert \left(  \Gamma_{n}-\Gamma\right)
R^{1/2}\left(  \zeta\right)  \right\Vert _{HS}^{2}\right]$ with computations similar
to those carried previously we get, for all $\zeta\in\partial\Omega_j$%
\[
\mathbb{E}\left[\left\Vert \left(  \Gamma_{n}-\Gamma\right)  R^{1/2}\left(
\zeta\right)  \right\Vert _{HS}^{2}\right]\leq\frac{C'\mathbf a_{j}}{n},%
\]%
with $C'=\text{Tr}(\Gamma)\max\{1,b-1\}$ and putting into Equation~(\ref{control_lemme12}) we obtain
\[
\mathbb{E}\left[\left\Vert \left(  \Gamma_{n}-\Gamma\right)  \left(  \Pi
_{m}-\widehat{\Pi}_{m}\right)  \beta\right\Vert\mathbf 1_{\mathcal J_n}\right] \leq\frac{C'\ln n}%
{\pi n}\left\Vert \beta^{\Diamond}\right\Vert \sum_{j=1}^{m}\mathbf a_{j}^{3/2}%
j^{-r/2}\sqrt{\delta_{j}}.%
\]
Considering again two cases related to the rate of decrease for the
eigenvalues we see first that for an exponential decay the term above is
bounded up to a constant by $ (\ln n) /n.$ Secondly in case of
polynomial decay we get :%
\begin{align*}
\mathbb{E}\left[\left\Vert \left(  \Gamma_{n}-\Gamma\right)  \left(  \Pi
_{m}-\widehat{\Pi}_{m}\right)  \beta\right\Vert\mathbf 1_{\mathcal J_n}\right]  & \leq\frac{C'\ln n}%
{\pi n}\left\Vert \beta^{\Diamond}\right\Vert \sum_{j=1}^{m}j^{3/2}j^{-r/2}%
j^{-\left(  1+a\right)  /2}\ln^{3/2} j\\
& \leq\frac{C'\ln n}{\pi n}\left\Vert \beta^{\Diamond}\right\Vert m^{2-\left(
r+a\right)  /2}\ln^{5/2} m%
\end{align*}
and $\ln n\ \ln^{5/2} m\cdot m^{2-\left(  r+a\right)  /2}/n=o\left(  m/n\right)  $ when
$r+a>2$.

Thus the proof is finished by Lemma~\ref{pbJn} 
\[\E\left[\left\Vert \left(  \Pi_{m}-\widehat{\Pi}_{m}\right)
\beta\right\Vert _{n}^{2}\mathbf 1_{\mathcal{J}_n^\complement}\right]\leq\|\beta\|_\Gamma^2\Pb(\mathcal J_n^\complement)\leq \frac{C''(b,\Gamma)}{n^2}.\] 

\end{proof}
\subsection{Technical part of the bound on $\Pb(\Delta_n^\complement)$}
\label{subsec:tech_DeltanC}
\begin{proof}[Proof of lemma~\ref{deltanb_tech}]

Let $\gamma$ be the contour defined by Figure~\ref{rect_contour} with $m=N_n$.

We have by Lemma~\ref{diff_proj} and the fact that $(I-\mathcal{T}(z))^{-1}\mathcal{T}(z)=\mathcal{T}(z)+(I-\mathcal{T}(z))^{-1}\mathcal{T}^2(z)$ 
\begin{align}
\Gamma^{1/2}\left[  \Gamma_{n}^{\dag}-\Gamma^{\dag}\right]
\Gamma^{1/2}\mathbf{1}_{\mathcal J_n} &  =\mathbf{1}_{\mathcal J_n}\frac{1}{2i\pi}\int_{\gamma}\frac{1}{z}\Gamma^{1/2}R^{1/2}(z)\left[I-\mathcal{T}(z)\right]^{-1}\mathcal{T}(z)R^{1/2}(z)
\Gamma^{1/2}dz\label{forme1}\\
&\hspace{-2cm}  =\mathbf{1}_{\mathcal J_n}\frac{1}{2i\pi}\int_{\gamma}\frac{1}{z}\Gamma^{1/2}R(z)\left(  \Gamma_{n}-\Gamma\right)  R(z)
\Gamma^{1/2}dz\nonumber\\
&\hspace{-1.5cm} +\mathbf{1}_{\mathcal J_n}\frac{1}{2i\pi}\int_{\gamma}\frac{1}{z}\Gamma^{1/2}R^{1/2}(z)\left[I-\mathcal{T}(z)\right]^{-1}\mathcal{T}^2(z)R^{1/2}(z)\Gamma^{1/2}dz.\label{forme2}
\end{align}

Now, we consider separately the two decreasing rates of the $\lambda_j$'s. 
\paragraph{Exponential decrease : $\lambda_j\asymp \exp(-j^a)$, with $a>0$}

By Equation~(\ref{forme1}) and the fact that $\|(I-\mathcal{T}(z))^{-1}\|_\infty<2$, on the set $\mathcal{J}_n$
\begin{align*}
\|\Gamma^{1/2}(\Gamma_n^\dag-\Gamma^\dag)\Gamma^{1/2}\|_\infty\mathbf{1}_{\mathcal J_n}&\leq \frac{1}{2\pi} \left\Vert \int_{\gamma}\frac{1}{z}\Gamma^{1/2}R^{1/2}(z)\left[I-\mathcal{T}(z)\right]^{-1}\mathcal{T}(z)R^{1/2}(z)\Gamma^{1/2}dz\right\Vert
_{\infty}\hspace{-3mm}\mathbf{1}_{\mathcal J_n}\\
&  \leq\pi^{-1}\sup_{z\in \gamma}\left[  \left\Vert\mathcal{T}\left(  z\right)
\right\Vert _{\infty}\right]  \int_{\gamma}\frac{1}{\left\vert z\right\vert
}\left\Vert \Gamma^{1/2}R^{1/2}(z)\right\Vert _{\infty
}^{2}dz\mathbf{1}_{\mathcal J_n}.
\end{align*}

For $z\in \text{supp}(\gamma)$, the eigenvalues of the operator $\Gamma^{1/2}R^{1/2}(z)$ are\\ $\left\{\lambda_j^{1/2}(z-\lambda_j)^{-1/2}, j\geq 1\right\}$, then 
\[\left\Vert\Gamma^{1/2}R^{1/2}(z)\right\Vert_\infty^2=\sup_{j\geq 1}\left\{\frac{\lambda_j}{|z-\lambda_j|}\right\},\]
and 
\begin{eqnarray*}
\left|\int_{\gamma}\frac{1}{\left\vert z\right\vert
}\left\Vert \Gamma^{1/2}R^{1/2}(z)\right\Vert _{\infty
}^{2}dz\right|
&\leq& C+2\int_0^{2 \lambda_1/\delta_{N_n}}\frac{du}{1+u^2}\leq C',
\end{eqnarray*} 
where $C$ and $C'$ are independent of $n$ and the last inequality comes from the fact that in the exponential case, there exists a constant $c>0$ such that $\delta_{N_n}/\lambda_{N_n}\geq c$. Then by lemmas~\ref{pbJn} and \ref{trick1}: 
\begin{eqnarray*}
\Pb(\mathcal J_n\cap\left\{\|\Gamma^{1/2}(\Gamma_n^\dag-\Gamma^\dag)\Gamma^{1/2}\|_\infty>\rho_0-1\right\})\leq \Pb\left(C'\sup_{z\in \text{supp}(\gamma)}\left[  \left\Vert\mathcal{T}\left(  z\right)
\right\Vert _{\infty}\right] >\pi(\rho_0-1)\right)\leq 2 \exp\left(-c^*\frac{n}{N_n^2\ln^2N_n}\right)
\end{eqnarray*}
with $c^*$ independent of $n$. The result comes from the fact that $N_n\leq 20 \sqrt{n/\ln^3 n}$.

\paragraph{Polynomial decrease : $\lambda_j\asymp j^{-a}$, $a>1$}
Denote by $T_1$ and $T_2$ the two terms of Equation~(\ref{forme2}) i.e.
\begin{eqnarray*}
T_1&=&\mathbf{1}_{\mathcal J_n}\frac{1}{2i\pi}\int_{\gamma}\frac{1}{z}\Gamma^{1/2}R(z)\left(  \Gamma_{n}-\Gamma\right)  R(z)
\Gamma^{1/2}dz ,\\
T_2&=&\mathbf{1}_{\mathcal J_n}\frac{1}{2i\pi}\int_{\gamma}\frac{1}{z}\Gamma^{1/2}R^{1/2}(z)\left[I-\mathcal{T}(z)\right]^{-1}\mathcal{T}^2(z)R^{1/2}(z)\Gamma^{1/2}dz.
\end{eqnarray*}

First we control $T_2$, the proof in the exponential case leads us to:
\begin{eqnarray*}
 \left\Vert T_2\right\Vert _{\infty}\leq \pi^{-1}\sup_{z\in \gamma}\left[  \left\Vert \mathcal{T}\left(  z\right)
\right\Vert _{\infty}^{2}\right]  \int_{\gamma}\frac{1
}{\left\vert z\right\vert }\left\Vert \Gamma^{1/2}R^{1/2}(z)\right\Vert _{\infty
}^{2}dz,
\end{eqnarray*}
and
\begin{eqnarray*}
\int_{\gamma}\frac{1}{\left\vert z\right\vert
}\left\Vert \Gamma^{1/2}R^{1/2}(z)\right\Vert _{\infty
}^{2}dz&\leq& C+\int_0^{2\lambda_1/\delta_{N_n}}\frac{\lambda_{N_n}du}{\sqrt{(\lambda_{N_n}-\delta_{N_n})^2+\delta_{N_n}^2u^2}\sqrt{1+u^2}}\\
&&\hspace{-5cm}\leq C+\int_0^1\frac{ \lambda_{N_n}}{\lambda_{N_n}- \delta_{N_n}}du+\int_1^{2\lambda_1/\delta_{N_n}}\frac{\lambda_{N_n}du}{\sqrt{(\lambda_{N_n}-\delta_{N_n})^2+\delta_{N_n}^2u^2}\sqrt{1+u^2}}\\
&&\hspace{-5cm}\leq 1+C+\int_1^{2\lambda_1/\delta_{N_n}}\frac{du}{\sqrt{1+u^2}}\leq C' \ln(N_n),
\end{eqnarray*}
with $C,C'>0$ independent of $n$.
Then lemmas~\ref{pbJn} and \ref{trick1} and the fact that  $N_n\leq 20 \sqrt{n/\ln^3 n}$ lead us to
\begin{eqnarray*}\Pb(\|T_2\|_\infty>(\rho_0-1)/2)\leq \Pb\left(C'\ln(N_n)\sup_{z\in \gamma}\left[  \left\Vert\mathcal{T}\left(  z\right)
\right\Vert _{\infty}^2\right] >\pi(\rho_0-1)/2\right)
\leq\exp\left(-c^{**}\frac{n}{N_n^2\ln^4(N_n)}\right)\leq C'' n^{-6},\end{eqnarray*}
with $c^**$ and $C''$ independent of $n$

Now, we can calculate explicitly the term $T_1$
\begin{equation*}
T_1=\mathbf{1}_{\mathcal J_n}\frac{1}{2i\pi}\int_{\gamma}\sum_{j,k\geq 1}\frac{\sqrt{\lambda_k}\sqrt{\lambda_j}}{z(z-\lambda_k)(z-\lambda_j)}\pi_k(\Gamma_n-\Gamma)\pi_jdz.
\end{equation*}
By the Residue Theorem
\[\frac{1}{2i\pi} \int_{\gamma}\frac{dz}{z(z-\lambda_k)(z-\lambda_j)}=\left\{\begin{array}{cl}
-\frac{1}{\lambda_j\lambda_k} &\text{ if }j\neq k, j\leq N_n\text{ and }k\leq N_n,\\
\frac{1}{\lambda_j(\lambda_j-\lambda_k)} &\text{if } j\leq N_n<k,\\
\frac{1}{\lambda_k(\lambda_k-\lambda_j)} &\text{if } k\leq N_n<j,\\
0&\text{otherwise}.
\end{array}\right.\]
Then
\begin{eqnarray*}
T_1&=&-\mathbf 1_{{\mathcal J}_n}\sum_{\substack{j,k=1\\j\neq k}}^{N_n}\frac{1}{\sqrt{\lambda_j\lambda_k}}\pi_k(\Gamma_n-\Gamma)\pi_j+\sum_{j=1}^{N_n}\sum_{k>N_n}\frac{\sqrt{\lambda_k}}{\sqrt{\lambda_j}(\lambda_j-\lambda_k)}\pi_k(\Gamma_n-\Gamma)\pi_j\\
&&+\sum_{j>N_n}\sum_{k=1}^{N_n}\frac{\sqrt{\lambda_j}}{\sqrt{\lambda_k}(\lambda_k-\lambda_j)}\pi_k(\Gamma_n-\Gamma)\pi_j\\
&=&-\mathbf 1_{{\mathcal J}_n}\sum_{\substack{j,k=1\\j\neq k}}^{N_n}\frac{\pi_k}{\sqrt{\lambda_k}}\Gamma_n\frac{\pi_j}{\sqrt{\lambda_j}}+\sum_{j=1}^{N_n}\left(s_j\Gamma_n\frac{\pi_j}{\sqrt{\lambda_j}}+\frac{\pi_j}{\sqrt{\lambda_j}} \Gamma_ns_j\right)=T_1'+T_1'',
\end{eqnarray*}
where $s_j:=\sum_{k>N_n}\frac{\sqrt{\lambda_k}}{\lambda_j-\lambda_k}\pi_k$. The term $\Gamma$ disappears because $\pi_j\Gamma\pi_k=0$ if $j\neq k$.

We control separately the operators $T_1'$ and $T_1''$. We have:
\[\|T_1'\|_\infty^2\leq\sum_{p,q\geq 1}<T_1'\psi_p,\psi_q>^2=\sum_{\substack{p,q=1\\p\neq q}}^{N_n}\left(\frac{1}{n}\sum_{i=1}^n\xi_p^{(i)}\xi_q^{(i)}\right)^2,\]
where we recall that $\xi^{(i)}_p=<X_i,\psi_p>/\sqrt{\lambda_p}$.
Then
\begin{eqnarray*}
\Pb(\|T_1'\|_\infty>(\rho_0-1)/4)\leq \Pb\left(\sum_{\substack{p,q=1\\p\neq q}}^{N_n}\left(\frac{1}{n}\sum_{i=1}^n\xi_p^{(i)}\xi_q^{(i)}\right)^2>\frac{(\rho_0-1)^2}{16}\right)\leq\sum_{\substack{p,q=1\\p\neq q}}^{N_n}\Pb\left(\left|\frac{1}{n}\sum_{i=1}^n\xi_p^{(i)}\xi_q^{(i)}\right|>\frac{\rho_0-1}{4N_n}\right).
\end{eqnarray*}
For all $p\neq q$, the sequence of random variables $\left\{\xi_p^{(i)}\xi_q^{(i)}, i=1,...,n\right\}$ is independent and centred and by assumptions~\textbf{H2} and \textbf{H3},
\[\E\left[|\xi_p\xi_q|^{m}\right]\leq m!b^{m-1},\]
then Lemma~\ref{bernstein} and the condition $N_n\leq 20\sqrt{n/\ln^3 n}$ implies
\[\Pb\left(\|T_1'\|_\infty>\frac{\rho_0-1}{4}\right)\leq 2N_n^2\exp\left(-C_1'\frac{n}{N_n^2}\right)\leq C_2' n^{-6},\]
with $C_1'=\frac{(\rho_0-1)^2}{32(2b+(\rho_0-1)/4)}$ and $C_2'$ depends only on $b$ and $\rho_0$.

We deal now with the operator $T_1''$, we can rewrite it like an array of independent random variables with values in $\mathcal H$, the set of the Hilbert-Schmidt operators of $\Ld$ equipped with the usual norm $\|T\|_{HS}^2=\sum_{p,q\geq 1}<T\psi_p,\psi_q>^2$, i.e. 
\[T_1''=\frac{1}{n}\sum_{i=1}^n\sum_{j=1}^{N_n}\left(s_jX_i\otimes\frac{\pi_j}{\sqrt{\lambda_j}}X_i+\frac{\pi_j}{\sqrt{\lambda_j}}X_i\otimes s_jX_i\right)=\frac1n\sum_{i=1}^nZ_i,\]
where, for all $f,g,h\in\Ld$, $f\otimes g\ h:=<f,h>g$.
 In order to apply the exponential inequality for centred Hilbert valued random variable given in Bosq~\cite[Theorem 2.5]{bosq_linear_2000}, we have to find two constants $B$ and $c$ such that \[\E\left[\|Z_i\|^m_{HS}\right]\leq (m!/2)B^2c^{m-2}.\]
We compute first $\|Z_i\|^2_{HS}$
\begin{equation*}
\|Z_i\|_{HS}^2\leq2\sum_{p,q\geq 1}<\sum_{j=1}^{N_n}\left(\frac{\pi_j}{\sqrt{\lambda_j}}X_i\otimes s_jX_i\right)\psi_p,\psi_q>^2=\sum_{p=1}^{N_n}\sum_{q>N_n}\frac{\lambda_q^2}{(\lambda_p-\lambda_q)^2}\left(\xi_p^{(i)}\xi_q^{(i)}\right)^2.
\end{equation*}
Now, by assumptions~\textbf{H2} and \textbf{H3},
\begin{eqnarray*}
\E\left[\|Z_i\|_{HS}^m\right]&\leq &\E\left[\|Z_i\|_{HS}^{2m}\right]^{1/2}=\left(\sum_{p_1,...,p_m=1}^{N_n}\sum_{q_1,...,q_m>N_n}\prod_{j=1}^m\frac{\lambda_{q_j}^2}{(\lambda_{p_j}-\lambda_{q_j})^2}\E\left[\left(\xi_{p_1}^{(i)}...\xi_{p_m}^{(i)}\right)^2\right]\E\left[\left(\xi_{q_1}^{(i)}...\xi_{q_m}^{(i)}\right)^2\right]\right)^{1/2}\\
&\leq &m!b^m\left(\sum_{p=1}^{N_n}\sum_{q>N_n}\frac{\lambda_q^2}{(\lambda_p-\lambda_q)^2}\right)^{m/2}.
\end{eqnarray*}
We apply then Theorem 2.5 of Bosq~\cite{bosq_linear_2000} with $B^2=2b^2\sum_{p=1}^{N_n}\sum_{q>N_n}\frac{\lambda_q^2}{(\lambda_p-\lambda_q)^2}$ and\\ $c=b\left(\sum_{p=1}^{N_n}\sum_{q>N_n}\frac{\lambda_q^2}{(\lambda_p-\lambda_q)^2}\right)^{1/2}$ and obtain, with the condition $N_n\leq 20\sqrt{n/\ln^3 n}$,
\[\Pb\left(\|T_1''\|_\infty>\frac{\rho_0-1}{4}\right)\leq \Pb\left(\|T_1''\|_{HS}>\frac{\rho_0-1}{4}\right)\leq2\exp\left(-C_1''\frac{n}{N_n}\right)\leq C_2''n^{-6} ;\]
where $C_1'':=(\rho_0-1)^2/(2\delta b^2+\sqrt{2\delta}b(\rho_0-1)/4)$, $C_2''$ depends only on $\rho_0$ and $\delta$
where $\delta>0$ depends only on the sequence $(\lambda_j)_{j\geq 1}$ and verifies, for all $p\leq N_n$,
\[\sum_{q>N_n}\frac{\lambda_q^2}{(\lambda_p-\lambda_q)^2}\leq N_n\delta/2.\]

\end{proof}

\section{Control of $\penh$ in the unknown variance case}
\label{subsec:penh}
\begin{lemma}\label{pen}
Under Assumption~\textbf{H1}, set $\kappa:=2\theta(1+2\delta)$, we have, for all $m\in\widehat{\Mnr}$
\begin{equation}\label{hpenmpen}\E_\mathbf{X}(\penh(m)-\pen^{(uv)}(m))\mathbf 1_G]\leq\kappa\frac{m}{n}\|\beta-\hat\Pi_m\beta\|_{\Gamma_n}^2,
\end{equation}
and
\begin{eqnarray}\EX[(\pen^{(uv)}(\hat m^{(uv)})-\penh(\hat m^{(uv)}))]\leq\frac{\kappa}{n} \mathbb E_\mathbf{X}[2\hmuv\nu_n(\hat\Pi_m\beta-\hat\beta_{\hmuv})]+\frac{\kappa \hat N_n}{n\sqrt n}\left(\sqrt{\Var(\varepsilon^2)}+2\|\beta\|_\Gamma\sigma\right).\end{eqnarray}
\end{lemma}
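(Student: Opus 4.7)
The plan hinges on the identity obtained by plugging $Y_i = \langle\beta, X_i\rangle + \varepsilon_i$ into $\hat\sigma_m^2 = \gamma_n(\hat\beta_m)$:
\[
\hat\sigma_m^2 - \sigma^2 = \|\beta - \hat\beta_m\|_{\Gamma_n}^2 + 2\nu_n(\beta - \hat\beta_m) + (\hat\sigma^2 - \sigma^2),
\]
where I write $\hat\sigma^2 := n^{-1}\sum_{i=1}^n \varepsilon_i^2$. Since $\penh(m) - \penuv(m) = \theta(1+\delta)\tfrac{m}{n}(\hat\sigma_m^2 - \sigma^2)$, each half of the lemma reduces to controlling $\hat\sigma_m^2 - \sigma^2$ in one of the two directions.

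For the first inequality I would exploit the optimality of $\hat\beta_m$ in $\hat S_m$: since $\hat\Pi_m\beta \in \hat S_m$, one has $\gamma_n(\hat\beta_m) \leq \gamma_n(\hat\Pi_m\beta)$, so expanding the right-hand side yields
\[
\hat\sigma_m^2 - \sigma^2 \leq \|\beta - \hat\Pi_m\beta\|_{\Gamma_n}^2 + 2\nu_n(\beta - \hat\Pi_m\beta) + (\hat\sigma^2 - \sigma^2).
\]
Taking $\EX$ eliminates the two noise-linear terms: $\hat\Pi_m\beta$ is $\mathbf{X}$-measurable, so $\EX[\nu_n(\beta - \hat\Pi_m\beta)] = 0$, and $\hat\sigma^2$ is independent of $\mathbf{X}$ with mean $\sigma^2$. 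Multiplying by $\theta(1+\delta)m/n$ and using $\theta(1+\delta) \leq \kappa$ gives the first claim.

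For the second inequality I apply the identity in the opposite direction with $m$ replaced by $\hmuv$: dropping the nonnegative squared-norm term yields the lower bound $\hat\sigma_{\hmuv}^2 - \sigma^2 \geq 2\nu_n(\beta - \hat\beta_{\hmuv}) + (\hat\sigma^2 - \sigma^2)$, hence
\[
\penuv(\hmuv) - \penh(\hmuv) \leq \theta(1+\delta)\frac{\hmuv}{n}\bigl[2\nu_n(\hat\beta_{\hmuv} - \beta) + (\sigma^2 - \hat\sigma^2)\bigr].
\]
Splitting $\nu_n(\hat\beta_{\hmuv} - \beta) = \nu_n(\hat\beta_{\hmuv} - \hat\Pi_m\beta) + \nu_n(\hat\Pi_m\beta - \beta)$ isolates the principal contribution. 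The two remaining "nuisance" pieces $\tfrac{\hmuv}{n}\nu_n(\hat\Pi_m\beta - \beta)$ and $\tfrac{\hmuv}{n}(\sigma^2 - \hat\sigma^2)$ I would control via the uniform bound $\hmuv \leq \hat N_n$ combined with Cauchy--Schwarz: $\E[|\hat\sigma^2 - \sigma^2|] \leq n^{-1/2}\sqrt{\Var(\varepsilon^2)}$, and $\E[|\nu_n(\hat\Pi_m\beta - \beta)|] \leq n^{-1/2}\sigma\|\beta\|_\Gamma$ (via $\EX[\nu_n(\hat\Pi_m\beta - \beta)^2] = n^{-1}\sigma^2\|\hat\Pi_m\beta - \beta\|_{\Gamma_n}^2 \leq n^{-1}\sigma^2\|\beta\|_{\Gamma_n}^2$, whose unconditional expectation is $n^{-1}\sigma^2\|\beta\|_\Gamma^2$). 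Absorbing $\theta(1+\delta)$ into $\kappa$ delivers the announced form.

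The main obstacle lies in the second bound: because $\hmuv$ depends on the noise $\varepsilon_1,\ldots,\varepsilon_n$, one cannot discard the nuisance term $\tfrac{\hmuv}{n}\nu_n(\hat\Pi_m\beta - \beta)$ by invoking $\EX[\nu_n(\hat\Pi_m\beta - \beta)] = 0$ alone. The trick is to use only the crude deterministic-in-$\varepsilon$ majoration $\hmuv \leq \hat N_n$, which decouples the random index from the noise functional; since $\hat N_n \leq 20\sqrt{n/\ln^3 n}$ by construction, the resulting remainder has the desired $\hat N_n/(n\sqrt n)$ order.
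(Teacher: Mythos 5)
Your proof is correct and takes essentially the same route as the paper's: the first bound comes from the optimality $\gamma_n(\hat\beta_m)\leq\gamma_n(\hat\Pi_m\beta)$ together with the cancellation of the noise-linear terms under $\EX$, and the second from the contrast decomposition of $\hat\sigma^2_{\hmuv}$, dropping the nonnegative squared norm, splitting $\nu_n$ at $\hat\Pi_m\beta$, and controlling the two nuisance terms via $\hmuv\leq\hat N_n$ and Cauchy--Schwarz. The only (immaterial) discrepancy is the sign of the principal term, $\nu_n(\hat\beta_{\hmuv}-\hat\Pi_m\beta)$ in your version versus $\nu_n(\hat\Pi_m\beta-\hat\beta_{\hmuv})$ in the statement; your sign is actually the one consistent with the decomposition used in the proof of Proposition~\ref{risknvi}, and it is harmless since the term is later absorbed into a supremum of $\nu_n^2$ over a symmetric set.
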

\begin{proof}
By definitions of $\hat\sigma_m^2=\gamma_n(\hat\beta_m)$ and $\hat\beta_m$, we have $\hat\sigma_m^2\leq\gamma_n(\hat\Pi_m\beta)$, then
\[\EX[(\widehat{\text{pen}}(m)-\penuv( m))]=\kappa\frac mn\EX[(\hat\sigma^2_m-\sigma^2)]\leq\kappa\frac mn\EX[(\gamma_n(\hat\Pi_m\beta)-\sigma^2)].\]

Now, by independence of $\varepsilon_i$ with $<\beta-\hat\Pi_m\beta,X_i>$,
\begin{eqnarray*}
\EX[(\gamma_n(\hat\Pi_m\beta)-\sigma^2)]&=&\EX\left[\left(\frac{1}{n}\sum_{i=1}^n(Y_i-<\hat\Pi_m\beta,X_i>)^2-\sigma^2\right)\right]\\
&=&\EX\left[\left(\frac{1}{n}\sum_{i=1}^n\varepsilon_i^2-2\varepsilon_i<\beta-\hat\Pi_m\beta,X_i>+<\beta-\hat\Pi_m\beta,X_i>^2-\sigma^2\right)\right]\\
&=&\EX[<\beta-\hat\Pi_m\beta,X_i>^2]=\EX[\|\beta-\hat\Pi_m\beta\|_{\Gamma_n}^2],
\end{eqnarray*}
and Equation~(\ref{hpenmpen}) follows.

Likewise: 
 \begin{eqnarray*}
 \EX[ (\penuv(\hmuv)-\widehat{\text{pen}}(\hmuv))]&=&\frac{\kappa}{n}\EX[\hmuv(\sigma^2-\hat\sigma^2_{\hmuv})]\\
 &\hspace{-11cm}=&\hspace{-5.5cm}\frac{\kappa}{n}(\EX[\hmuv(\sigma^2-\widetilde\sigma^2)]-\EX[\hmuv\|\beta-\hat\beta_{\hmuv}\|_{\Gamma_n}^2]+2\EX[\hmuv\nu_n(\beta-\hat\beta_{\hmuv})]\\
 &\hspace{-11cm}\leq&\hspace{-5.5cm}\frac{\kappa}{n}(\EX[\hmuv(\sigma^2-\widetilde\sigma^2)]+2\EX[\hmuv\nu_n(\beta-\hat\beta_{\hmuv})])\\
 &\hspace{-11cm}\leq&\hspace{-5.5cm}\frac{\kappa}{n}(\EX[\hmuv(\sigma^2-\widetilde\sigma^2)]+2\EX[\hmuv\nu_n(\beta-\hat\Pi_m\beta)+\hmuv\nu_n(\hat\Pi_m\beta-\hat\beta_{\hmuv})])
 \end{eqnarray*}
 with $\widetilde\sigma^2:=\frac{1}{n}	\sum_{i=1}^n\varepsilon_i^2$.
 By Cauchy-Schwarz's Inequality
 \begin{eqnarray*}
 \EX[\hmuv(\sigma^2-\widetilde\sigma^2)]\leq \hat N_n\EX[(\sigma^2-\widetilde\sigma^2)^2]^{1/2}=\hat N_n\left(\frac{1}{n^2}\sum_{i,j=1}^n\EX[(\varepsilon_i^2-\sigma^2)(\varepsilon_j^2-\sigma^2)]\right)^{1/2}
 =\hat N_n\sqrt{\frac{\Var(\varepsilon_i^2)}{n}},
 \end{eqnarray*}
 and, since the $\varepsilon_i$'s are independent of the $X_i$'s and by consequence of $\hat\Pi_m$, we have:
 \begin{eqnarray*}
 \EX[\hat m\nu_n(\beta-\hat\Pi_m\beta)]&\leq& \hat N_n\EX[\nu_n^2(\beta-\hat\Pi_m\beta)]^{1/2}\\
 &\leq& \frac{\hat N_n}{n}\left(\sum_{i_1,i_2=1}^n\EX[\varepsilon_{i_1}\varepsilon_{i_2}<\beta-\hat\Pi_m \beta,X_{i_1}><\beta-\hat\Pi_m \beta,X_{i_2}>]\right)^{1/2}\\
 &\leq&\frac{\hat N_n}{\sqrt n}\sigma\E[\|\beta-\hat\Pi_m\beta\|_{\Gamma_n}^2]^{1/2}\leq\frac{\hat N_n}{\sqrt n}\sigma\|\beta\|_\Gamma .
 \end{eqnarray*}
\end{proof}

\end{appendices}
\bibliography{biblio}
\bibliographystyle{siam}
\end{document}